\documentclass[12pt, reqno]{amsart}
\usepackage{amssymb}
\usepackage{amsmath}
\usepackage{amscd}
\usepackage{mathrsfs}
\usepackage{float}
\usepackage{graphicx}
\usepackage{amsfonts}
\usepackage{color}
\usepackage{pb-diagram}
\usepackage[utf8]{inputenc}
\usepackage[T1]{fontenc}
\usepackage{multicol} 
\columnsep=20pt 
\columnseprule=1pt 
\setlength{\textwidth}{16cm}
\setlength{\oddsidemargin}{-0.1cm}
\setlength{\evensidemargin}{-0.1cm}

\newcommand{\Y}{{\rm Y}_{d,n}^{\mathtt B}}
\newcommand{\Z}{{\rm Y}_{d,n-1}^{\mathtt B}}

\newcommand{\s}{\mathtt s}
\newcommand{\R}{\mathtt r}

\newcommand{\U}{\mathsf{u}}
\newcommand{\V}{\mathsf{v}}

\newcommand{\vnor}{v_i^r\otimes v_j^s}
\newcommand{\vinv}{v_j^s\otimes v_i^r}

\newcommand{\vinvn}{v_{-j}^{s}\otimes v_i^r}
\newcommand{\vnorn}{v_{-i}^{r}\otimes v_j^s}

\newtheorem{theorem}{Theorem}
\newtheorem{corollary}{Corollary}
\newtheorem{lemma}{Lemma}
\newtheorem{proposition}{Proposition}
\newtheorem{definition}{Definition}
\newtheorem{remark}{Remark}
\newtheorem{example}{Example}
\newtheorem*{note}{Note}

\begin{document}

\title{A framization of the Hecke algebra of type $\mathtt B$}

\author{Marcelo Flores}
\address{Instituto de Matem\'{a}ticas, Universidad de Valpara\'{i}so,
Gran Breta\~{n}a 1091, Valpara\'{i}so, Chile.}
\email{marcelo.flores@uv.cl}

\author{Jes\'us Juyumaya}
\address{Instituto de Matem\'{a}ticas, Universidad de Valpara\'{i}so,
Gran Breta\~{n}a 1091, Valpara\'{i}so, Chile.}
\email{juyumaya@gmail.com}

\author{Sofia Lambropoulou}
\address{School of Applied Mathematical and Physical Sciences,
National Technical University of Athens,
Zografou campus, GR-157 80 Athens, Greece.}
\email{sofia@math.ntua.gr}
\urladdr{http://www.math.ntua.gr/$\tilde{~}$sofia}

\thanks{The first author was partially supported by Conicyt (Programa de Inserci\'on de Capital Humano Avanzado, PAI 79140019). The second
 author was partially supported by Fondecyt (Grant No. 1141254). The research of the third author has been cofinanced by the European Union (European Social Fund—ESF) and Greek national funds through the Operational Program “Education and Lifelong Learning” of the National Strategic Reference Framework (NSRF)—Research Funding Program: THALES: Reinforcement of the interdisciplinary and/or inter-institutional research and innovation.
}
\keywords{Braid group, Framed braid group, Yokonuma--Hecke algebra, Markov trace}

\subjclass[2010]{57M27, 57M25,  20F38, 20C08, 20F36}

\date{}

\begin{abstract}
In this article we introduce a framization of the Hecke algebra of type $\mathtt B$.
For this framization we construct a faithful tensorial representation and two linear bases.
We finally construct a Markov trace on these algebras and from this trace we derive isotopy invariants for framed and classical knots and links in the solid torus.
\end{abstract}

\maketitle

\setcounter{tocdepth}{1}
\begin{center}
\begin{minipage}{10cm}
{\scriptsize\tableofcontents}
\end{minipage}
\end{center}

\section*{Introduction}
The idea of framization of a knot algebra (Hecke algebras and BMW--algebra among others) was introduced by the two last authors in \cite{jula7} and consists in adding framing generators to the defining generators of the knot algebra with the aim of finding new invariants of classical links or, more generally,  invariants of knot-like objects. The Yokonuma--Hecke algebra  is the prototype of framization; indeed this algebra, introduced by T. Yokonuma \cite{yo} in the context of representations of Chevalley groups, can be thought of as a framization of the Hecke algebra of type $A$.

\smallbreak

More precisely, the Yokonuma--Hecke algebra supports a Markov trace \cite{juJKTR} and then it becomes  a peculiar knot algebra considering that,  by using the Jones' recipe,   one can construct  invariants for: framed links \cite{jula5}, classical links \cite{jula4} and singular links \cite{jula3}. It is worth mentioning that recently it was proved that the invariants for classical links constructed in \cite{jula4} are not topologically equivalent either to the Homflypt polynomial or to the Kauffman polynomial, see \cite{chjukala}.

\smallbreak

On the other hand, Jones raised that his recipe for the construction of the Homflypt  polynomial might  be used for  Hecke algebra not only of type $\mathtt A$, cf.  \cite[p.336]{jo}.  Then,  the third  author studied the Jones recipe by using the Hecke algebra of type $\mathtt B$; namely, in  \cite{la1} she constructed the analogue of the Homflypt polynomial for oriented knots and links inside the solid torus, see also \cite{gela}. Further, in  \cite{la2} Lambropoulou constructed all possible analogues of the Homflypt polynomial in the solid torus from the Ariki-Koike algebras and the affine Hecke algebras of type $\mathtt A$.

\smallbreak

The purpose of this article is to introduce and to start a systematic study of a framization of the Hecke algebras of type $\mathtt B$, denoted by $\Y(\U,\V)$,  with the principal objective to explore their usefulness  in knot theory. Thus, having in mind both the role of the Hecke algebra
of type $\mathtt B$  \cite{gela} and  the
Yokonuma--Hecke algebra of type $\mathtt A$ \cite{jula5, jula4, jula3} in knot theory,  it is natural to define, by using the Jones' recipe applied to  $\Y(\U,\V)$,  invariants  in the solid torus for: classical  knots, framed knots and singular knots. For these purposes a first key point is to prove that the algebra $\Y(\U, \V)$ supports a Markov trace. In fact, this is one of the main results proved in the present article.

\smallbreak

 In \cite{chpoIMRN}, M. Chlouveraki and L. Poulain D' Andecy have introduced the affine and cyclotomic Yokonuma--Hecke algebras. In the context of framization \cite{jula7}, the definition of the algebras introduced by Chlouveraki and Poulain D' Andecy can be understood by adding framing generators and making the framization according to the formula of framization of the generators of the Yokonuma--Hecke algebra of type $\mathtt{A}$, that is, only of the braiding generators. Now, the Hecke algebra of type $\mathtt{B}$ is a particular case of cyclotomic Hecke algebra.  The framization of the Hecke algebra of type $\mathtt{B}$ proposed here makes the framization of all generators of the Hecke algebra of type $\mathtt{B}$; in particular, also of the special \lq loop\rq\ generator of the algebra.

\smallbreak

Before giving the organization of the article we note that, by taking into account  the various articles generated recently from  the algebra of Yokonuma--Hecke of type $\mathtt A$  (view for example \cite{esry,cui1,cui2,chporep} among others), the framization proposed here  indicates  that the algebra
$\Y(\U, \V)$ should be interesting in itself.

\smallbreak

The  article is organized as follows. In Section 1 we introduce our notation and explain the background notions. In Section 2 we define our framizations for the Coxeter group of type $\mathtt{B}$, for the Artin braid group of type $\mathtt{B}$ and for the Hecke algebra of type $\mathtt{B}$, $\Y$. In Section 3 we construct a  tensorial representation for the algebra $\Y:=\Y(\U,\V)$. In Section 4 we find linear bases for $\Y$, one of which  is used in Section 5 for constructing a Markov trace on the algebras $\Y$. Finally, in Section 6 necessary and sufficient conditions are given for the trace parameters in order to proceed with the construction of topological invariants of framed and classical knots and links in the solid torus (Section 7).

\section{Notation and background}

In this section we review known results, necessary for this paper, and we also fix the following terminology and notations that will be used  along the paper:
\begin{enumerate}

\item[--] The letters $\U$ and $\V$ denote two indeterminates. And we denote by  ${\Bbb K}$, the field of rational functions ${\Bbb C}(\U,\V)$.

\item[--]The term {\it algebra} means unital associative algebra over  ${\Bbb K}$

\item[--] For a finite group $G$, ${\Bbb K}[G]$ denotes the group algebra of $G$

\item[--] The letters $n$ and $d$ denote two fixed  positive integers

\item[--] We denote by  $\omega$ a fixed  primitive $d$--th root of unity

\item[--] We denote by ${\Bbb Z}/d{\Bbb Z}$ the group of integers modulo $d$, $\{0, 1,\ldots, d-1\}$, and by $C_d$ the cyclic group of order $d$, $\langle t \, ;\, t^d=1\rangle$. Note  that   $C_d\cong {\Bbb Z}/d{\Bbb Z}$.

\item[--] As usual, we denote by $\ell$ the length function associated to the Coxeter groups.

\end{enumerate}

\subsection{\it Braid groups of type $\mathtt{A}$}

The finite Coxeter group of type ${\mathtt A}_n$ ($n\geq 2$)  can be realized  as the symmetric group  on the set  $\{1, \ldots , n\}$. Set $s_i$ the elementary transposition $(i, i+1)$, so the Coxeter presentation of $S_n$ is encoded in the following Dynkin diagram:

\begin{center}
\setlength\unitlength{0.2ex}
\begin{picture}(350,40)
\put(82,20){$s_1$}
\put(120,20){$s_{2}$}
\put(200,20){$s_{n-2}$}
\put(240,20){$s_{n-1}$}

\put(85,10){\circle{5}}
\put(87.5,9){\line(1,0){35}}
\put(125,10){\circle{5}}
\put(127.5,10){\line(1,0){10}}

\put(145,10){\circle*{2}}
\put(165,10){\circle*{2}}
\put(185,10){\circle*{2}}

\put(205,10){\circle{5}}
\put(207.5,10){\line(1,0){35}}
\put(245,10){\circle{5}}
\put(192.5,10){\line(1,0){10}}


\end{picture}
\end{center}

Then, the Artin braid group, $B_n$, associated to $S_n$ is generated by the elementary braidings $\sigma_1 , \ldots , \sigma_{n-1}$, which satisfy the following relations:

 \begin{equation}\label{braidr}
\begin{array}{rcll}
 \sigma_i \sigma_j & =  & \sigma_j \sigma_i & \text{ for} \quad \vert i-j\vert >1\\
  \sigma_i \sigma_j \sigma_i & = & \sigma_j \sigma_i \sigma_j & \text{ for} \quad \vert i-j\vert = 1.
  \end{array}
\end{equation}

\bigbreak
The \textit{framed braid group}, ${\mathcal F}_n$, is defined as the group presented by the braiding generators  $\sigma_1, \ldots , \sigma_{n-1}$ and the framing generators $t_1, \ldots , t_n$ subject to the relations (\ref{braidr}) together with the relations:

\begin{equation}\label{fbraidA}
\begin{array}{rclr}
 t_i t_j & =  & t_jt_i & \text{for all}  \quad  i, j\\
 t_j\sigma_i   & = &  \sigma_i t_{s_i(j)}&\text{for all}\quad i, j
  \end{array}
\end{equation}

\bigbreak
Notice that ${\mathcal F}_n$ is isomorphic to the  wreath product ${\mathbb Z}\wr B_n$. The $d$--\textit{modular framed braid group}  ${\mathcal F}_{d, n}$ is defined by adding to the above defining presentation of  ${\mathcal F}_n$ the relations $t_i^d = 1$. Hence, ${\mathcal F}_{d,n}\cong \left({\mathbb Z}/d {\mathbb Z} \right)\wr B_n$.

\smallbreak

  It is convenient to write the elements of ${\mathcal F}_{n}$ in the split form $\sigma t_1^{m_1}\ldots t_n^{m_n}$, where the $m_i$'s are integers (called the framings)   and $\sigma\in B_n$. A framed braid can be represented as a usual geometric braid on $n$ strands by attaching the respective framing to each strand.

For details and the above geometric interpretation of the framed braid group see \cite{kosm}. See also \cite{jula2, jula5}.

\subsection{\it Braid groups of type $\mathtt{B}$}\label{coxeterB}

Set $n\geq 2$. Let us denote by $W_n$ the Coxeter group of type ${\mathtt  B}_n$. This is the finite Coxeter group
 associated to the following Dynkin diagram

Define $\R_k=\s_{k-1}\ldots \s_1 \R_1 \s_1\ldots \s_{k-1}$ for $2\leq k\leq n$.
It is known, see \cite{gela},  that every element $w\in W_n$ can be written uniquely as $w=w_1\ldots w_n$ with $w_k\in \mathtt{N}_k$, $1\leq k\leq n$, where
\begin{equation}\label{NWn}
\mathtt{N}_k:=\left\{
1, \R_{k},
\s_{k-1}\cdots \s_{i},
\s_{k-1}\cdots \s_{i}\R_{i}\, ;\, 1\leq i \leq k-1
\right\}
\end{equation}
Furthermore, this expression for $w$ is reduced. Hence, we have $\ell(w)=\ell(w_1)+\cdots +\ell(w_n)$.
\bigbreak
The corresponding \textit{braid group of type} ${\mathtt  B}_{n}$ associated to $W_n$, is defined as the group $\widetilde{W}_n$ generated  by $\rho_1 , \sigma_1 ,\ldots ,\sigma_{n-1}$ subject to the following relations
 \begin{equation}\label{braidB}
\begin{array}{rcll}
 \sigma_i \sigma_j & =  & \sigma_j \sigma_i & \text{ for} \quad \vert i-j\vert >1\\
  \sigma_i \sigma_j \sigma_i & = & \sigma_j \sigma_i \sigma_j & \text{ for} \quad \vert i-j\vert = 1\\
   \rho_1\sigma_i&=&\sigma_i\rho_1 &\text{ for}\quad i>1\\
\rho_1 \sigma_1 \rho_1\sigma_1 & = & \sigma_1 \rho_1 \sigma_1\rho_1. &
  \end{array}
\end{equation}

Geometrically, braids of type ${\mathtt  B}_{n}$ can be viewed as classical braids of type ${\mathtt  A}_{n+1}$ with $n+1$ strands, such that the first strand is identically fixed. This is called `the fixed strand'. The 2nd, \ldots, $(n+1)$st strands are renamed from 1 to $n$ and they are called `the moving strands'. The `loop' generator $\rho_1$ stands for the looping of the first moving strand around the fixed strand in the right-handed sense. In Figure \ref{bbraid} we illustrate a braid of type ${\mathtt  B}_5$.

\begin{figure}[h]
\begin{center}

  \includegraphics{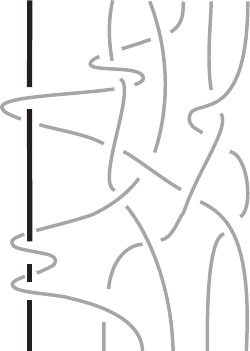}
	\caption{A braid of ${\mathtt  B}_5$-type.}\label{bbraid}
	\end{center}
 \end{figure}

\subsection{}\label{realized1}

The group $W_n$ can be realized as a subgroup of the permutation group of the set  $X_n:=\{-n, \ldots , -2, -1, 1, 2, \ldots, n\}$. More precisely, the elements of $W_n$ are the permutations $w$ such  that $w(-m) = - w(m)$, for all $m \in X_n$. For example
$$
  \s_i \ \ \text{is realized as} \ \ \left(
                       \begin{array}{ccccccccccc}
                         -n &\dots  &-i-1  &-i  & -i+1 &\dots  & i-1 & i & i+1 &\dots &n \\
                          -n &\dots  & -i &-i-1 & -i+1 &\dots  & i-1 & i+1 & i &\dots &n  \\
                       \end{array}
                     \right)
$$
and
$$
  \R_1 \ \ \text{is realized as} \ \ \left(
                       \begin{array}{cccrrccc}
                         -n & \dots & -2 & -1 & 1 & 2 & \dots & n \\
                         -n & \dots & -2 & 1 & -1 & 2 & \dots & n\\
                       \end{array}
\right).
$$

Further, the elements of $W_n$ can be parameterized  by the elements of $X_n^n$ (see \cite[Lemma 1.2.1]{gr}). More precisely,   the element $w\in W_n$ corresponds to the element $(m_1, \ldots ,m_n)\in X_n^n$ such that  $m_i= w(i)$. Then,   we have
that $\s_i$ is parameterized by $(1,2,\dots,i+1,i,\dots, n)$ and $\R_1$ is parameterized by $ (-1,2,\dots, n)$. More generally, if  $w\in W_n$ is parameterized by $(m_1,\dots, m_n)\in X_n^n$, then
\begin{equation}\label{parametrized}
\begin{array}{ccl}
w\R_1 & \quad \text{is parameterized by}&\quad (-m_1,m_2,\dots, m_n)\\
w\s_i &\quad \text{is parameterized by}&\quad (m_1,\dots,m_{i+1},m_{i},\dots,m_n ).
\end{array}
\end{equation}

\begin{lemma}\label{coxeter}\cite[Lemma 1.2.2]{gr}
Let $w\in W_n$ parameterized  by $(m_1,\dots, m_n)\in X_n^n$. Then $\ell(w \s_i)=\ell(w)+1$ if and only if $m_i<m_{i+1}$ and $\ell(w\R_1)=\ell(w)+1$ if and only if $m_1>0$.
 \end{lemma}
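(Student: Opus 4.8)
The plan is to reduce the two descent conditions to a single closed combinatorial formula for the length on $W_n$ and then read them off from the operations recorded in (\ref{parametrized}). Using the signed-permutation realization of $W_n$, for $w$ parameterized by $(m_1,\dots,m_n)$ I would introduce the statistic
\begin{equation*}
L(w):=\#\{(i,j)\,;\,1\leq i<j\leq n,\ m_i>m_j\}+\sum_{\{i\,;\,m_i<0\}}|m_i|
\end{equation*}
and prove that $L=\ell$. Granting this, the lemma follows at once. By (\ref{parametrized}), right multiplication by $\s_i$ swaps the entries in positions $i$ and $i+1$; this leaves the second summand unchanged (the multiset of values is preserved) and changes the inversion count only through the pair $(i,i+1)$, so that $\ell(w\s_i)=\ell(w)+1$ exactly when this pair becomes an inversion, i.e. when $m_i<m_{i+1}$. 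Right multiplication by $\R_1$ replaces $m_1$ by $-m_1$; when $m_1>0$ the sign term grows by $|m_1|$ while the inversion term drops by $|m_1|-1$, for a net change of $+1$, and when $m_1<0$ the same bookkeeping gives $-1$. Hence $\ell(w\R_1)=\ell(w)+1$ iff $m_1>0$.

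The substantive point is therefore the identity $L=\ell$. The inequality $L\leq\ell$ is the easy half: the computations above, carried out for an arbitrary $w$, show $|L(ws)-L(w)|=1$ for every generator $s$, and since $L(1)=0$, expressing $w$ as a product of $\ell(w)$ generators forces $L(w)\leq\ell(w)$. For the reverse inequality $\ell\leq L$ I would induct on $L(w)$. The base case $L(w)=0$ forces $m_1<\dots<m_n$ with all $m_i>0$, hence $(m_1,\dots,m_n)=(1,\dots,n)$ and $w=1$. The inductive step requires exhibiting, for each $w\neq 1$, a generator that strictly decreases $L$: if $m_1<0$ one uses $\R_1$; otherwise the sequence $(m_1,\dots,m_n)$ must contain a descent $m_i>m_{i+1}$ (a strictly increasing sequence beginning with a positive entry can only be $(1,\dots,n)$), and then $\s_i$ lowers $L$. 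Once such a generator $s$ is found, $\ell(w)\leq\ell(ws)+1\leq L(ws)+1=L(w)$ closes the induction.

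I expect the only genuine obstacle to be this existence of a length-decreasing generator of the predicted combinatorial type, everything else being mechanical inversion bookkeeping. A conceptually cleaner route that sidesteps proving $L=\ell$ is to invoke the standard root-system descent criterion $\ell(ws_\alpha)>\ell(w)\iff w(\alpha)\in\Phi^+$ for simple roots $\alpha$: realizing $W_n$ on $\mathbb{R}^n$ with $w(e_k)=\mathrm{sgn}(m_k)\,e_{|m_k|}$, taking the simple roots $-e_1$ for $\R_1$ and $e_i-e_{i+1}$ for $\s_i$, and computing $w(-e_1)$ and $w(e_i-e_{i+1})$, one checks by a four-case sign analysis that these lie in the corresponding positive system precisely when $m_1>0$ and $m_i<m_{i+1}$, respectively. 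Either route yields the stated equivalences.
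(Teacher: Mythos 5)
You are proving something the paper never proves: Lemma \ref{coxeter} is quoted from Green \cite[Lemma 1.2.2]{gr}, and no argument for it appears anywhere in the text, so the comparison here is between your proof and a citation. Your argument is correct and complete. The statistic $L(w)=\#\{(i,j)\,;\,i<j,\ m_i>m_j\}+\sum_{\{i\,;\,m_i<0\}}|m_i|$ is the classical length formula for the hyperoctahedral group (Bj\"{o}rner--Brenti, Propositions 8.1.1--8.1.2), and each step of your verification holds: right multiplication by $\s_i$ swaps adjacent entries, fixes the sign term, and moves the inversion count by exactly $\pm1$ according to the sign of $m_{i+1}-m_i$; right multiplication by $\R_1$ moves the sign term by $\pm|m_1|$ and the inversion count by $\mp(|m_1|-1)$, the key (and correctly used) observation being that exactly $|m_1|-1$ of the entries $m_2,\dots,m_n$ have absolute value smaller than $|m_1|$; the inequality $L\le\ell$ then follows from $L(1)=0$ and unit increments, and $\ell\le L$ from your induction, whose only substantive point --- the existence of an $L$-decreasing generator for $w\neq1$ --- is settled by noting that a strictly increasing sequence starting with a positive entry must be $(1,\dots,n)$. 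Your alternative route via the descent criterion $\ell(ws_\alpha)>\ell(w)\iff w(\alpha)\in\Phi^+$ is also sound, provided (as you implicitly do) the simple system $\{-e_1,\ e_i-e_{i+1}\}$ is paired with its own positive system rather than the usual one; the four-case sign analysis then reproduces exactly the two stated conditions. What the paper's citation buys is brevity, defensible since this is a standard fact about signed permutations; what your proof buys is self-containedness, at the modest cost of introducing the auxiliary identity $L=\ell$, which the paper never needs again.
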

\begin{example} \rm
  Set $n=3$ and $w=\s_1\R_1$. Then we have that $\ell(w\R_1)<\ell(w)$ and $\ell(w\s_1)>\ell(w)$, and $w$ is represented by $(-2,1,3)$.
\end{example}

\begin{remark}[Symmetric braids]\label{symbraids1} \rm
The above realization of the $W_n$  can be lifted also to the braid level. Indeed in \cite{tomd} tom Dieck defines the \textit{symmetric braids} (denoted by $ZB_n$) and proves that $\widetilde{W}_n$ is isomorphic to this group of braids. Specifically tom Dieck considers the braids in $\mathbb{R}\times [0,1]$ with strands between $X_n\times \{ 0\} \times \{ 0\}$ and $X_n\times \{ 0\} \times \{ 1\}$ which are symmetric about the axis $(0,0)\times [0,1]$. Moreover, the group of the symmetric braids is generated by the elements $z_0,z_1,\dots, z_{n-1}$ represented graphically as follows.

\begin{figure}[h]
\begin{center}
  \includegraphics{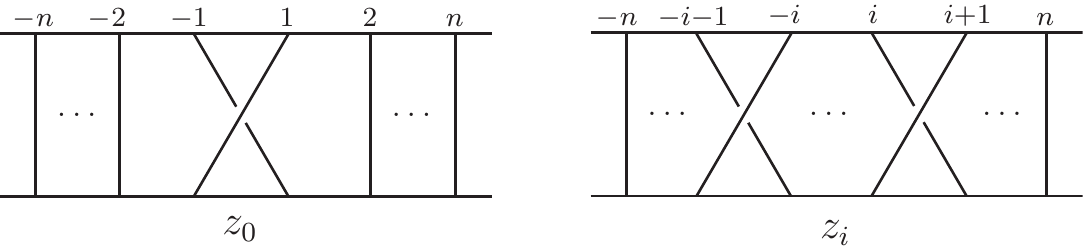}
	\caption{The generators of the symmetric braid group.}\label{symbraids}
	\end{center}
 \end{figure}

An isomorphism between the groups $\widetilde{W}_n$ and $ZB_n$ is induced by the mapping:
$\rho_1\mapsto z_0$, $\sigma_i\mapsto z_i$ for $1\leq i \leq n-1$.
\end{remark}


\subsection{}

The Hecke algebra of type $\mathtt{B}_n$, denoted ${\rm H}_n(\U, \V)$, is the algebra generated by $h_0, h_1, \ldots, h_{n-1}$ subject  to the following relations:
$$
\begin{array}{cccl}
h_ih_j & = &  h_j h_i & \text{for all}\quad \vert i - j\vert >1 \\
h_ih_{i+1}  h_i  & = &  h_{i+1} h_i  h_{i+1}&\text{for all}\quad i =1, \ldots , n-2\\
h_1  h_0 h_1 h_0 & = & h_0h_1 h_0h_1 & \\
h_i^2 & = & 1 + (\U-\U^{-1})h_i &  \text{for all}\quad i\\
h_0^2 & = & 1 + (\V-\V^{-1})h_0.&
\end{array}
$$
It is well known that the dimension of ${\rm H}_n(\U, \V)$ is $2^nn!$ and clearly for $\U = \V = 1$ it  coincides with $\mathbb{K}[W_n]$.

\section{Framizations of type ${\mathtt B}$} \label{framizations}

In this section  we introduce the main object studied in the paper, that is, a framization of the Hecke algebra of type $\mathtt{B}$. To do that, previously we introduce a framed version of both, the braid group and the Coxeter group of type ${\mathtt B}$.
At the end of the section we include  some useful relations derived directly from the defining relations of our framization algebra.

\subsection{}

We start with the definition of a $d$--framed version of $W_n$.

 \begin{definition}\rm
 The {\it $d$--modular framed Coxeter group of type ${\mathtt B_n}$}, $W_{d,n}$, is defined as the group generated by $\R_1 , \s_1, \ldots, \s_{n-1}$ and $t_1,\ldots , t_n$ satisfying the Coxeter relations of type ${\mathtt B}_n$ among  $\R_1$ and the $\s_i$'s, the relations  $t_it_j = t_jt_i$ for all $i,j$, the relations $t_j^d = 1$  for all $j$, together with the following relations:
\begin{equation}\label{wdn}
  \begin{array}{rcl}
 t_j\R_1 & =  &\R_1 t_j\qquad \text{for all j}\\
 t_j\s_i  & =  &\s_i t_{\s_i(j)}.
\end{array}
\end{equation}

The analogous group defined by the same presentation, where only relations $t_j^d = 1$ are omitted, shall be called {\it framed Coxeter group of type ${\mathtt B_n}$} and will be denoted as $W_{\infty,n}$.
 \end{definition}

\begin{definition}\rm
The {\it framed braid group of type  ${\mathtt B}_n$}, denoted $\mathcal{F}_{n}^{\mathtt{B}}$, is the group presented by generators $\rho_1, \sigma_1, \ldots ,\sigma_{n-1}$, $t_1, \ldots , t_n$
subject to the  relations (\ref{fbraidA}) and (\ref{braidB}),
together with the following relations:
\begin{equation}\label{fbraidrB}
\begin{array}{ccl}
t_i \rho_1 & = & \rho_1 t_i \quad \text{for all i}.\\
  \end{array}
\end{equation}

The {\it $d$--modular framed braid group}, denoted  $\mathcal{F}_{d,n}^{\mathtt{B}}$, is  defined as the group obtained  by adding the relations $t_i^d=1$, for all $i$, to the above defining presentation of $\mathcal{F}_{n}^{\mathtt{B}}$.
\end{definition}

In Figure~\ref{braidgenrs} the generators of the groups  $\mathcal{F}_{n}^{\mathtt{B}}$ and $\mathcal{F}_{d,n}^{\mathtt{B}}$ are illustrated.

\begin{figure}[h]
\centering
  \includegraphics{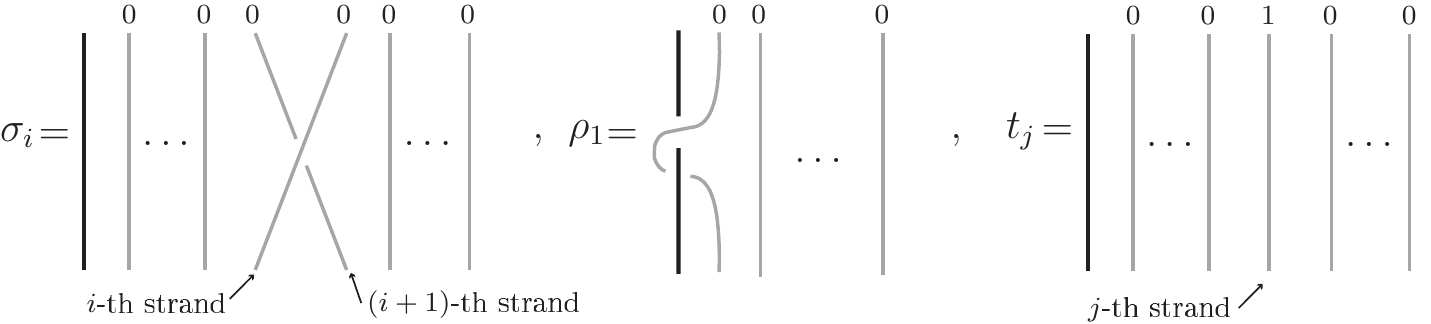}
	\caption{The generators of the groups $\mathcal{F}_{n}^{\mathtt{B}}$ and $\mathcal{F}_{d,n}^{\mathtt{B}}$.\label{braidgenrs}}

 \end{figure}

The mapping that acts as the identity on the generators $\R_1$ and the $\s_i$'s and maps the  $t_j$'s to  $1$ defines a morphism from $W_{d,n}$ onto $W_n$.  Also, we have the  natural epimorphism  from $\mathcal{F}_{d,n}^{\mathtt{B}}$ onto $W_{d,n}$ defined  as the identity on the $t_j$'s and mapping $\rho_1$ to $\R_1$ and  $\sigma_i$ to $\s_i$, for all $i$. Thus, we have the following sequence of epimorphisms.
$$
\mathcal{F}_n^{\mathtt{B}}\longrightarrow\mathcal{F}_{d,n}^{\mathtt{B}}\longrightarrow W_{d,n}
\longrightarrow W_n
$$
where the first arrow is the natural projection of $\mathcal{F}_n^{\mathtt{B}}$ to
 $\mathcal{F}_{d,n}^{\mathtt{B}}$.
\smallbreak
Finally, we can lift trivially the length function $\ell$ on $W_n$ to $W_{d,n}$. Indeed, we deduce  from relations (\ref{wdn}), that every $x\in W_{d, n}$ can be written in the form $x= wt_1^{a_1}\cdots t_n^{a_n}$, with $w\in W_n$. Thus, we define the length of $x$  by $\ell(w)$. We denote again by $\ell$ the length function on $W_{d,n}$.

\smallbreak
Geometrically, elements in $\mathcal{F}_n$ (resp. $\mathcal{F}_{d,n}$) are braids of type ${\mathtt{B}}$ (recall Figure~\ref{bbraid}) such that each one of the $n$ moving strands has a framing in ${\mathbb Z}$ (resp. ${\mathbb Z}/d{\mathbb Z}$) attached.

\begin{remark}[Symmetric framed braids]\label{symfrbraids} \rm
Following Remark~\ref{symbraids1}, one can define analogously the \textit{symmetric framed braids} resp. \textit{$d$-modular symmetric framed braids} (denoted by $Z\mathcal{F}_n$ resp. $Z\mathcal{F}_{d,n}$) and prove that the liftings $\widetilde{W}_{\infty,n}$ resp. $\widetilde{W}_{d,n}$ are isomorphic to these groups of braids.
 In terms of geometric relizations, a symmetric framed braid is a braid in $ZB_n$ with an  integer resp. $d$-modular framing attached on each strand, such that the strands $i$ and $-i$ have the same framing, for all $i$.
\end{remark}

Note  that $W_{n}$  can be regarded naturally as a subgroup of $W_{d,n}$; indeed, the elements of $W_n$ correspond to the elements of $W_{d,n}$ having all framings equal to $0$.
We will proceed now to lift the sets $\mathtt{N}_k$ of $W_n$, introduced in Subsection \ref{coxeterB}, to subsets  $\mathtt{N}_{d,k}$ of the group $W_{d,n}$, with the aim to give a standard writing for the elements of $W_{d,n}$ which will be useful to parameterize later a basis of the framization of the Hecke algebra of type $\mathtt{B}_n$ defined here.

\smallbreak
We define inductively the  subsets $\mathtt{N}_{d,k}$ of  $W_{d,n}$ as follows:
$$
\mathtt{N}_{d,1}:=\{t_1^m,\R_1 t_1^m ;\ 0\leq m \leq d-1 \}
$$
 and
$$
\mathtt{N}_{d,k}=\{t_k^m, \R_{k}t_k^{m}, \s_{k-1}x\ ;\ x\in N_{d,k-1}, 0\leq m \leq d-1 \}
\quad \mbox{for all $2\leq k\leq n$.}
$$
Note that, for all $k$ and $d$, we have  $\mathtt{N}_k\subseteq \mathtt{N}_{d,k}$ and for $d=1$ the sets $\mathtt{N}_k$ and $\mathtt{N}_{1,k}$ coincide. Also,   every element $x\in \mathtt{N}_{d,k}$ can be written as $x=y t_k^m$, with $y\in \mathtt{N}_k$. Further, we have the following proposition.

\begin{proposition}\label{standardWdn}
Every element of $W_{d,n}$ can be expressed in the standard form, that is, as a product  $m_1\ldots m_n$, where  $m_i\in \mathtt{N}_{d,i}$.
\end{proposition}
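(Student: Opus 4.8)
The plan is to argue by induction on $n$, reducing the existence of a standard form to the single statement that $\mathtt{N}_{d,n}$ is a complete set of right coset representatives of the subgroup $W_{d,n-1}=\langle\R_1,\s_1,\dots,\s_{n-2},t_1,\dots,t_{n-1}\rangle$ in $W_{d,n}$, i.e.\ that $W_{d,n}=W_{d,n-1}\,\mathtt{N}_{d,n}$. Granting this, any $x\in W_{d,n}$ factors as $x=x'\,m_n$ with $x'\in W_{d,n-1}$ and $m_n\in\mathtt{N}_{d,n}$; since the sets $\mathtt{N}_{d,k}$ with $k\leq n-1$ all lie in $W_{d,n-1}$, the inductive hypothesis rewrites $x'=m_1\cdots m_{n-1}$ and yields the standard form $x=m_1\cdots m_n$. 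The base case $n=1$ is immediate, as $W_{d,1}=\{\R_1^{\varepsilon}t_1^{m}\}$ coincides with $\mathtt{N}_{d,1}$.

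To establish $W_{d,n}=W_{d,n-1}\,\mathtt{N}_{d,n}$ I would start from the split form recalled before the statement: write $x=w\,t_1^{a_1}\cdots t_n^{a_n}$ with $w\in W_n$, and decompose $w$ by the type $\mathtt{B}$ result of \cite{gela}, collecting the first $n-1$ factors as $w=w'w_n$ with $w'=w_1\cdots w_{n-1}\in W_{n-1}$ and $w_n\in\mathtt{N}_n$. Unrolling the recursive definition shows that the elements of $\mathtt{N}_{d,n}$ are exactly the products $w_n\,t_i^{m}$, where $w_n=\s_{n-1}\cdots\s_i\R_i^{\varepsilon}\in\mathtt{N}_n$ and the framing index $i$ is the foot of the descending $\s$-chain (with $i=n$ when $w_n\in\{1,\R_n\}$). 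Thus the task is to push the framing block $\tau:=t_1^{a_1}\cdots t_n^{a_n}$ across $w_n$ so that exactly one framing is absorbed into $w_n$, producing such an element of $\mathtt{N}_{d,n}$, while the surviving framings, reindexed into $\{1,\dots,n-1\}$, remain inside $W_{d,n-1}$.

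The computation rests on two facts about how framings travel. First, each $\R_i$ commutes with every $t_j$: pushing $t_j$ through $\R_i=\s_{i-1}\cdots\s_1\R_1\s_1\cdots\s_{i-1}$ by means of $t_j\s_\ell=\s_\ell t_{\s_\ell(j)}$ and $t_j\R_1=\R_1 t_j$ returns $t_j$ unchanged. Hence the effect of $w_n$ on framings is governed by its underlying unsigned permutation $\s_{n-1}\cdots\s_i$, which sends $i$ to $n$ and carries $\{1,\dots,n\}\setminus\{i\}$ onto $\{1,\dots,n-1\}$. Moving $\tau$ to the left via $w_n t_\ell=t_{w_n(\ell)}w_n$ and using that framings commute gives $w_n\tau=\tau_{<n}\,t_n^{a_i}\,w_n$ with $\tau_{<n}\in\langle t_1,\dots,t_{n-1}\rangle$. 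Second, iterating $\s_\ell t_\ell=t_{\ell+1}\s_\ell$ gives the absorption identity $\s_{n-1}\cdots\s_i\,t_i^{m}=t_n^{m}\,\s_{n-1}\cdots\s_i$, i.e.\ $t_n^{a_i}w_n=w_n t_i^{a_i}$. Combining, $w_n\tau=\tau_{<n}\,(w_n t_i^{a_i})$, whence $x=(w'\tau_{<n})\,(w_n t_i^{a_i})$ with $w'\tau_{<n}\in W_{d,n-1}$ and $w_n t_i^{a_i}\in\mathtt{N}_{d,n}$. The degenerate cases $w_n=1$ and $w_n=\R_n$ (where $\R_n$, like every $\R_i$, commutes with all the $t_j$) run identically and yield the representatives $t_n^{a_n}$ and $\R_n t_n^{a_n}$.

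I expect the main obstacle to be precisely this framing bookkeeping: checking that the permutation underlying $\s_{n-1}\cdots\s_i$ moves exactly one index (namely $i$) to $n$, so that one and only one framing is absorbed while all others descend into $W_{d,n-1}$, and deriving the commutation $\R_i t_j=t_j\R_i$ and the absorption identity uniformly across the three shapes of $w_n\in\mathtt{N}_n$. As a consistency check, the recursion gives $|\mathtt{N}_{d,k}|\leq 2dk$, so $\prod_{k=1}^{n}|\mathtt{N}_{d,k}|\leq(2d)^{n}n!=d^{n}|W_n|=|W_{d,n}|$; since the surjection constructed above forces the reverse inequality, it is in fact a bijection, which delivers uniqueness of the standard form as a bonus.
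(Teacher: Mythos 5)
Your proposal is correct and follows essentially the same route as the paper's own proof: induction on $n$, using the decomposition $w=w_1\cdots w_n$ with $w_n\in\mathtt{N}_n$ from \cite{gela}, pushing the framing block through $w_n$ (where $\R_i$ commutes with all $t_j$ and the unsigned permutation $\s_{n-1}\cdots\s_i$ reindexes the framings), and absorbing exactly $t_i^{a_i}$ into $w_n$ to produce an element of $\mathtt{N}_{d,n}$ times an element of $W_{d,n-1}$. Your write-up is in fact more explicit about the permutation bookkeeping than the paper's two-line computation, and the closing cardinality argument giving uniqueness of the standard form is a correct bonus not claimed in the proposition (though it silently uses $|W_{d,n}|=d^n|W_n|$, i.e.\ the semidirect-product structure of $W_{d,n}$).
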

\begin{proof}
Set $x= w_1\ldots w_nt_1^{a_1}\ldots t_n^{a_n}\in W_{d,n}$. We will prove the claim by induction on $n$.
 For $n=2$, it is straightforward to check that $x$ can be written in standard form. E.g. if $w_1 = \R_1$ and $w_2 = s_1\R_1$, we have
 $$
x= w_1w_2t_1^at_2^b = (\R_1)(s_1\R_1) t_1^at_2^b =(\R_1t_1^b)(s_1\R_1 t_1^a)= m_1m_2
 $$
where $m_1= \R_1t_1^b$ and $m_2= s_1\R_1 t_1^a$. Suppose now that the proposition is true for all positive integers less than $n$. Then, if  $w_n$ in $x$ is equal to $1$ or $\R_n$, we have:
$$
x= (w_1\ldots w_{n-1}t_1^{a_1}\ldots t_{n-1}^{a_{n-1}})(w_nt_n^{a_n}).
$$
Now, $w_nt_n^{a_n}\in \mathtt{N}_{d,n}$ and applying the induction hypothesis on the word inside the first parenthesis above, we deduce that $x$ can be written in the standard form.

If $w_n$ is equal to $ s_{n-1}\ldots s_i$ or $ s_{n-1}\ldots s_i\R_i$, we have:
\begin{eqnarray*}
x & =  &  w_1\ldots w_{n-1}(w_nt_i^{a_i})t_1^{a_1} \ldots t_{i-1}^{a_{i-1}}t_{i+1}^{a_{i+1}}\ldots t_n^{a_n}\\
& = & (w_1 \ldots w_{n-1}t_1^{a_1}\ldots t_{i-1}^{a_{i-1}}t_i^{a_{i+1}}\ldots t_{n-1}^{a_n})(w_nt_i^{a_i}).
\end{eqnarray*}
Again, noting that $w_nt_i^{a_i}\in \mathtt{N}_{d,n}$ and applying the induction hypothesis on the word inside the first parenthesis above, it follows that $x$ can be written in the standard form.
\end{proof}

\subsection{}

In order to define a framization of the Hecke algebra of type $\mathtt{B}$,  we need to introduce the following elements $f_1$ and $e_i$, for $i= 1, \ldots, n-1$, in  ${\Bbb K}[{\mathcal F}_{d,n}^{\mathtt B}]$,
$$
f_1:= \frac{1}{d}\sum_{m=0}^{d-1} t_1^{m}
 \quad \text{and}\quad
e_i:= \frac{1}{d}\sum_{m=0}^{d-1} t_i^{m}t_{i+1}^{d-m}\quad \text{for all}\quad 1\leq i\leq n-1.
$$
Notice that the $f_1$ and the $e_i$'s are idempotents, cf \cite{jula5} for the $e_i$'s.
\begin{definition} \rm
Let $n\geq 2$.
 The algebra ${\rm Y}_{d, n}^{\mathtt{B}} = {\rm Y}_{d, n}^{\mathtt{B}}(\U, \V)$  is defined  as the quotient of
 ${\Bbb K}[\mathcal{F}_{d,n}^{\mathtt{B}}]$ over the two--sided ideal generated by the following elements:
$$
\rho_1^2 - 1 - (\V - \V^{-1})f_1\rho_1 \quad \text{and} \quad    \sigma_i^2 - 1 -(\U - \U^{-1})e_i\sigma_i\quad \text{for all}\quad 1\leq i\leq n-1.
$$
\end{definition}

We shall denote  the corresponding to $\sigma_i$ (respectively, to $\rho_1$) in $ {\rm Y}_{d, n}^{\mathtt{B}}$ by $g_i$ (respectively, by $b_1$) and we shall keep the same   notation for the $t_j$'s  (respectively, the $e_i$'s and $f_1$) in  ${\rm Y}_{d, n}^{\mathtt{B}}$.
Hence, equivalently, the algebra $\Y$ can be defined by generators $1, b_1, g_1, \ldots , g_{n-1}$, $t_1, \ldots , t_{n}$ and relations as follows.

\begin{eqnarray}g_ig_j & = & g_jg_i  \quad \text{ for} \quad \vert i-j\vert > 1\label{braid1}\\
  g_i g_j g_i & = & g_j g_i g_j \quad \text{ for} \quad \vert i-j\vert = 1 \label{braid2}\\
  b_1 g_i & = & g_i b_1 \quad \text{for all}\quad  i\not= 1 \label{braid3}\\
b_1 g_1 b_1 g_1 & = &  g_1 b_1 g_1 b_1 \label{braid4}\\
t_i t_j & =  & t_j t_i  \quad \text{for all }  \  i, j\label{modular2}\\
  t_j g_i & =  & g_i t_{s_i(j)}\quad \text{for all }\,  i, j  \label{th}\\
t_i b_1 & = & b_1 t_i \quad \text{for all i}\quad   \label{tb1}\\
t_i^d & = & 1 \quad \text{for all}\quad i \label{modular1}\\
g_i^2 & = &  1+ (\U-\U^{-1})e_ig_i\quad \text{for all}\quad 1\leq i\leq n-1	\label{quadraticU}\\
b_1^2 & = &  1 + (\V-\V^{-1})f_1b_1.\label{quadraticV}
\end{eqnarray}

\begin{note}\rm
We extend the above definition for $n=1$ by defining   $\Y$ as the algebra generated by $t_1$ and $b_1$ subject to the relations (\ref{tb1}), (\ref{modular1}) and (\ref{quadraticV}).
\end{note}

\begin{remark}\label{diff}\rm
 Note that $\Y$ is different from the algebra ${\rm Y}(d,m,n)$, for $m=2$ and suitable parameters $\lambda_1$ and $\lambda_2$, defined by M. Chlouveraki and L. Poulain d'Andecy in \cite{chpoIMRN}. Indeed, they differ in the quadratic relation of the generator $b_1$, since in $\Y$ the relation (\ref{quadraticV}) involves framing generators, meanwhile the quadratic relation defined in ${\rm Y}(d,2,n)$ doesn't.
\end{remark}

From the above description by generators and relations of the algebras  $\Y$ we have  $\Y\subseteq {\rm Y}_{d, n+1}^{\mathtt{B}}$, for all $n\geq 1$. Thus, by taking ${\rm Y}_{d,0}^{\mathtt{B}}:= {\Bbb K}$, we have
that following tower of algebras.
\begin{equation}\label{tower}
{\rm Y}_{d,0}^{\mathtt{B}}\subseteq {\rm Y}_{d,1}^{\mathtt{B}}\subseteq \cdots \subseteq{\rm Y}_{d,n}^{\mathtt{B}} \subseteq {\rm Y}_{d,n+1}^{\mathtt{B}}\subseteq \cdots
\end{equation}

It is clear that  $f_1$ commutes with $b_1$  and  $e_i$ commutes with $g_i$. These facts implies that the generators $b_1$ and $g_i$'s are invertible. Moreover, we have:
\begin{equation}\label{inverse}
b_1^{-1} = b_1 - (\V-\V^{-1})f_1\quad \text{and}\quad g_i^{-1} = g_i - (\U -\U^{-1})e_i.
\end{equation}

\begin{remark}\rm
Notice that, by taking $d=1$, the algebra ${\rm Y}_{1,n}^{\mathtt B}$ becomes ${\rm H}_n(\U,\V)$. Further, by mapping $g_i\mapsto h_i$ and $t_i\mapsto 1$, we obtain an epimorphism from $\Y$ to ${\rm H}_n(\U,\V)$. Moreover, if we map the $t_i$'s  to a fixed non--trivial $d$--th root of the unity, we have an epimorphism from $\Y$ to ${\rm H}_n(\U,1)$.
\end{remark}

\begin{remark}\rm
Notice that the relations (\ref{braid1})--(\ref{tb1}) are the defining relations of  $\mathcal{F}_n^{\mathtt{B}}$ and the relations (\ref{braid1})--(\ref{modular1}) are the defining relation of $\mathcal{F}_{d,n}^{\mathtt{B}}$. Then,  ${\rm Y}_{d, n}^{\mathtt{B}}$ can be obviously defined  as  a quotient of the group algebra $\mathbb{K}[\mathcal{F}_n^{\mathtt{B}}]$ or $\mathbb{K}[\mathcal{F}_{d,n}^{\mathtt{B}}]$. Alternatively, ${\rm Y}_{d, n}^{\mathtt{B}}$ can be regarded as  a
$(\U, \V)$--deformation of the group algebra ${\Bbb K}[W_{d,n}]$ of the $d$-modular framed braid group of type $\mathtt{B}$.
\end{remark}

\subsection{}

We also have the following relations which are deduced easily and will be  used frequently in the sequel.
\begin{eqnarray*}
e_i b_1 &  = &   b_1e_i \quad \text{for all} \ i\\
f_j g_i & = &   g_i f_j \quad \text{for}\quad \vert i-j\vert >1
\end{eqnarray*}
where $f_j$ is the natural generalization of $f_1$,
$$
f_j :=\frac{1}{d}\sum_{m=0 }^{d-1}t_j^m.
$$
Notice that the $f_j$'s are idempotents.

Finally, we finish the section by introducing certain elements $b_i\in \Y$ and  proving some algebraic identities   which will be used along the paper.
 Set
$$
b_i := g_{i-1}\ldots g_1 b_1 g_1^{-1}\ldots g_{i-1}^{-1}\quad   \text{for all}\quad 2\leq i\leq n.$$

\newpage
Figure~\ref{bi} illustrates the elements $b_i$.

\begin{figure}[h]
\begin{center}
  \includegraphics{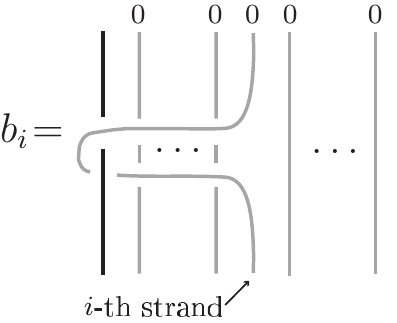}
	\caption{The elements $b_i$.}\label{bi}
	\end{center}
 \end{figure}

  Further, for all $i$ we have  $b_i f_i = f_i b_i$ and a direct computation shows that,
$$b_i^2 = 1 + (\V-\V^{-1})f_i b_i\quad\mbox{and}\quad b_i^{-1} = b_i - (\V -\V^{-1})f_i.$$

\begin{proposition}\label{braidHBHB}
For $n\geq 2$ and $1\leq i,k\leq n-1$, the following relations hold in $\Y$:
\begin{enumerate}
\item[(i)] $b_kt_j=t_jb_k$, for all j
\item[(ii)] $b_k g_i = g_i b_k$, for $i\leq k-2$ or $i\geq k+1$
\item[(iii)] $g_k b_{k} g_k b_{k} =  b_{k} g_k b_{k}g_k $
\item[(iv)] $g_kb_{k}b_{k+1} = b_{k}g_kb_{k}$.
\end{enumerate}
\end{proposition}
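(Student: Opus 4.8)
The backbone of the whole argument is the recursive identity
\[
b_{k+1}=g_kb_kg_k^{-1},\qquad\text{equivalently}\qquad b_k=g_{k-1}b_{k-1}g_{k-1}^{-1},
\]
which is immediate from the definition $b_k=g_{k-1}\cdots g_1b_1g_1^{-1}\cdots g_{k-1}^{-1}$. It reduces every assertion to the base generators $b_1$, $g_1$ and the $t_j$, and lets me argue by induction on $k$. I would establish the four items in the order (i), (ii), (iii), (iv): item (iii) uses the far commutations coming from (ii), and (iv) is a one-line consequence of (iii).

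For (i) I would simply transport $t_j$ across the braid word defining $b_k$. Pushing $t_j$ rightward through $g_{k-1}\cdots g_1$ by \eqref{th} replaces its index by $\pi(j)$, where $\pi=s_1\cdots s_{k-1}$; then $t_{\pi(j)}$ commutes with $b_1$ by \eqref{tb1}; finally pushing it rightward through $g_1^{-1}\cdots g_{k-1}^{-1}$ (using the inverse form $t_\ell g_i^{-1}=g_i^{-1}t_{s_i(\ell)}$ of \eqref{th}) applies the permutation $s_{k-1}\cdots s_1=\pi^{-1}$. As the two permutations are mutually inverse, the framing index returns to $j$, whence $t_jb_k=b_kt_j$.

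For (ii) the far case $i\ge k+1$ is immediate: such a $g_i$ commutes with every $g_\ell$ ($\ell\le k-1$) occurring in $b_k$ since $|i-\ell|\ge2$, and with $b_1$ by \eqref{braid3}. The near case $i\le k-2$ I would prove by induction on $k$ via $b_k=g_{k-1}b_{k-1}g_{k-1}^{-1}$. If $i\le k-3$, then $g_i$ commutes with $g_{k-1}$ and, by the inductive hypothesis, with $b_{k-1}$, so it slides through $b_k$ untouched. The delicate subcase is $i=k-2$: here I would further expand $b_{k-1}=g_{k-2}b_{k-2}g_{k-2}^{-1}$, apply the braid relation $g_{k-2}g_{k-1}g_{k-2}=g_{k-1}g_{k-2}g_{k-1}$ and its inverse to re-bracket $g_{k-2}\,b_k\,g_{k-2}^{-1}$, and then use the far commutation $g_{k-1}b_{k-2}=b_{k-2}g_{k-1}$ to collapse the result back to $b_k$. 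I expect this subcase to be the main obstacle, as it is the only place where a genuine braid move, rather than a plain commutation, is required.

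For (iii) I would induct on $k$, the base case $k=1$ being precisely relation \eqref{braid4}. Writing $b_k=g_{k-1}b_{k-1}g_{k-1}^{-1}$ and abbreviating $a=g_{k-1}$, $c=b_{k-1}$, $x=g_k$, the claim becomes $x(aca^{-1})x(aca^{-1})=(aca^{-1})x(aca^{-1})x$. Using only the commutation $xc=cx$ (the far case $g_kb_{k-1}=b_{k-1}g_k$ of (ii)), the braid relation $xax=axa$ together with its standard consequences $a^{-1}xa=xax^{-1}$ and $axa^{-1}=x^{-1}ax$, and the inductive hypothesis $caca=acac$, I would reduce the left side to $axcaca\,x^{-1}a^{-1}$ and the right side to $axcac\,x^{-1}a^{-1}x$; these agree because $ax^{-1}a^{-1}=x^{-1}a^{-1}x$, itself a rearrangement of $xax=axa$. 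Finally (iv) follows at once: substituting $b_{k+1}=g_kb_kg_k^{-1}$ and using (iii),
\[
g_kb_kb_{k+1}=g_kb_kg_kb_kg_k^{-1}=b_kg_kb_kg_kg_k^{-1}=b_kg_kb_k.
\]
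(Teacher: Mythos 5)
Your proof is correct and takes essentially the same route as the paper: for (iii) you induct on $k$ with base case the defining relation (\ref{braid4}), using the recursion $b_{k+1}=g_kb_kg_k^{-1}$, the braid relation and the far commutation from (ii), and your derivation of (iv) from (iii) is word-for-word the paper's. The paper dismisses (i) and (ii) as direct consequences of the defining relations; your detailed arguments for them (the permutation-cancellation for (i), and the braid-move computation for the subcase $i=k-2$ of (ii)) are correct fillings-in of exactly those omitted verifications.
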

\begin{proof}
The proof of relations (i) and (ii) follows directly by using the defining relations of  $\Y$. The proof of  relations (iii) is by induction on $k$. For $k=1$, the relation in question is the defining relation (\ref{braid4}). Let us suppose now that relation (iii) holds for all positive integers less than $k+1$. Then for $k+1$ we have:
\begin{eqnarray*}
g_{k+1} b_{k+1} g_{k+1} b_{k+1}&=&g_{k+1}(g_kb_{k}g_k^{-1}) g_{k+1} (g_kb_{k}g_k^{-1}) \\
&\stackrel{(\ref{braid2})}{=}& g_{k+1} g_kb_{k}g_{k+1} g_k g_{k+1}^{-1}b_{k}g_k^{-1} \\
& = & g_{k+1} g_k g_{k+1}b_{k} g_k b_{k}g_{k+1}^{-1}g_k^{-1}\\
& \stackrel{(\ref{braid2})}{=}& g_kg_{k+1} g_kb_{k} g_k b_{k}g_{k+1}^{-1}g_k^{-1}\\
& \stackrel{({\rm ind.})}{=} & g_k g_{k+1}b_{k} g_k b_{k}g_k g_{k+1}^{-1}g_k^{-1}\\
& = & g_k b_{k}g_{k+1} g_k b_{k}g_k g_{k+1}^{-1} g_k^{-1}\\
& = & b_{k+1}g_k g_{k+1} g_k b_{k}g_{k}g_{k+1}^{-1}g_k^{-1}\\
& \stackrel{(\ref{braid2})}{=}& b_{k+1}g_{k+1} g_k g_{k+1}  b_{k}g_k g_{k+1}^{-1} g_k^{-1}\\
& =& b_{k+1}g_{k+1} g_k  b_{k}g_{k+1}g_kg_{k+1}^{-1}g_k^{-1}\\
& = & b_{k+1}g_{k+1}  b_{k+1} g_kg_{k+1}g_kg_{k+1}^{-1}g_k^{-1}\\
& \stackrel{(\ref{braid2})}{=} & b_{k+1}g_{k+1}  b_{k+1} g_{k+1}g_kg_{k+1}g_{k+1}^{-1}g_k^{-1} \\
&=&  b_{k+1}g_{k+1} b_{k+1} g_{k+1}.
\end{eqnarray*}

We prove now relation (iv). We have $g_k b_{k} b_{k+1} = g_kb_{k}(g_kb_{k}g_k^{-1}) =  (g_kb_{k}g_kb_{k})g_k^{-1}$. Then by using relation (iii) we obtain
$g_k b_{k} b_{k+1} = (b_{k}g_kb_{k}g_k) g_k^{-1} = b_{k}g_kb_{k}$, so relation (iv) is true.
\end{proof}

\section{A tensorial representation for $\Y$}

We will define now  a tensorial representation for the algebra $\Y$.
The definition of this representation is based on the tensorial representation constructed by Green in \cite{gr} for the Hecke algebra of type $\mathtt B$ and  following the idea of an  extension of the Jimbo representation of the Hecke algebra of type $\mathtt A$ to the Yokonuma-Hecke algebra proposed by Espinoza and Ryom--Hansen in \cite{esry}. The tensorial representation constructed here will be used to prove that a set of linear generators, denoted ${\mathsf D}_n$, is a basis for $\Y$ (Theorem \ref{basis}). Further, as a corollary, we obtain that this tensorial representation is faithful (Corollary \ref{faithful}).

\subsection{}

Let $V$ be a  ${\Bbb K}$--vector space with basis $\mathcal{B}=\{v_i^r\, ;\,  i\in X_n,\, 0\leq r\leq d-1\} $.  As usual we denote by $\mathcal{B}^{\otimes k}$ the natural  basis
of  $V^{\otimes k}$ associated to $\mathcal{B}$. That is, the elements of $\mathcal{B}^{\otimes k}$  are of the form:
$$
v_{i_1}^{m_1}\otimes\cdots\otimes v_{i_k}^{m_k}
$$
where $(i_1, \ldots , i_k)\in X_n^k$ and $(m_1, \ldots , m_k)\in ({\Bbb Z}/d{\Bbb Z})^k$.

 We define the endomorphism $T$ of $V$ by
$$
(v_i^r )T=\omega^r v_i^r
$$
and the endomorphism $G$ of $V\otimes V$ by
$$
(\vnor)G=\left\{\begin{array}{cl}
\U\vinv & \text{ for}\quad i=j\  \text{and } \  r=s\\
\vinv & \text{ for}\quad i< j\  \text{and } \  r=s\\
\vinv+ (\U-\U^{-1})\vnor & \text{ for}\quad i>j\ \text{and }\  r=s\\
\vinv & \text{ for}\quad  r\not=s.
\end{array} \right.
$$
For all $1\leq i \leq n-1$, we extend these endomorphisms to the endomorphisms $T_i$ and $G_i$ of the $n$th tensor power $V^{\otimes n}$ of $V$, as follows:
$$
T_i:=1_V^{\otimes(i-1)}\otimes T\otimes  1_V^{\otimes(n-i)} \qquad\text{and}\qquad
G_i:= 1_V^{\otimes(i-1)}\otimes G\otimes 1_V^{\otimes(n-i-1)}
$$
where $1_V^{\otimes k}$ denotes the endomorphism identity of $V^{\otimes k}$. Further we define the endomorphism $B_1$ of $V^{\otimes n}$ by:
$$
(v_{i_1}^{r_1}\otimes \dots \otimes v_{i_n}^{r_n} )B_1=
\left\{\begin{array}{cl}
 v_{-i_1}^{r_1}\otimes \dots \otimes v_{i_n}^{r_n} & \text{for}\quad i_1>0\ \text{and}\ r_1=0\\
 v_{-i_1}^{r_1}\otimes \dots \otimes v_{i_n}^{r_n}+(\V-\V^{-1})v_{i_1}^{r_1}\otimes \dots \otimes v_{i_n}^{r_n}  &\text{for}\quad i_1<0\ \text{and}\ r_1=0\\
  v_{-i_1}^{r_1}\otimes \dots \otimes v_{i_n}^{r_n} & \text{for}\quad r_1\not=0.
\end{array} \right.
$$

The main goal of this section is to prove that these endomorphisms define a representation of the $\Y$ in the algebra of endomorphisms ${\rm End}(V^{\otimes n})$ of $V^{\otimes n}$. To do that, we will need  certain  endomorphisms $E_i$ of $V^{\otimes n}$, introduced in \cite{esry}, which are defined by,
$$
E_i = \frac{1}{d}\sum_{m=0}^{d-1}T_i^mT_{i+1}^{-m} \qquad (1 \leq i \leq n-1).
$$
Also, we will need  the following element $F \in {\rm End}(V^{\otimes n})$,
$$
F := \frac{1}{d}\sum_{m=0}^{d-1}T_1^m.
$$

\begin{lemma}\label{F}
We have:
\begin{enumerate}
\item
$$
(v_i^r\otimes v_j^s)E_i=\left\{\begin{array}{cc}
 0 & r\not=s\\
 v_i^r\otimes v_j^s & r=s
\end{array} \right.
$$
\item
$$
(v_{i_1}^{r_1}\otimes \dots \otimes v_{i_n}^{r_n})F=\left\{\begin{array}{cc}
 0 & r_1>0\\
 v_{i_1}^{r_1}\otimes \dots \otimes v_{i_n}^{r_n} & r_1=0.
\end{array} \right.
$$
\end{enumerate}
\end{lemma}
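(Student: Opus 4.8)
The plan is to reduce both identities to a single ingredient: the orthogonality of the $d$-th roots of unity, namely that $\frac1d\sum_{m=0}^{d-1}\omega^{mk}$ equals $1$ when $d\mid k$ and $0$ otherwise. This follows from summing the geometric series $\sum_{m=0}^{d-1}(\omega^k)^m$ and using that $\omega$ is a \emph{primitive} $d$-th root of unity, so that $\omega^k=1$ iff $d\mid k$. Since the framings satisfy $0\le r,s\le d-1$, the congruence $r\equiv s \pmod d$ is equivalent to the equality $r=s$, matching exactly the case distinction in the statement.

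First I would record the diagonal action of the framing endomorphism. By definition $(v_i^r)T=\omega^r v_i^r$, so $T_i^m$ acts on a simple tensor by scaling the $i$-th factor by $\omega^{m r_i}$ (where $r_i$ is the framing exponent of that factor) and fixing every other factor. In particular each $T_i$ is diagonal in the basis $\mathcal{B}^{\otimes n}$, and the products $T_i^m T_{i+1}^{-m}$ occurring in $E_i$, as well as the operators $T_1^m$ occurring in $F$, act as the identity outside the relevant tensor slots. Hence both computations localise to those slots, and no interaction with the remaining factors needs to be tracked.

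For part (1), applying $E_i=\frac1d\sum_{m=0}^{d-1}T_i^m T_{i+1}^{-m}$ to $\vnor$, each summand multiplies the vector by $\omega^{mr}\omega^{-ms}=\omega^{m(r-s)}$ without otherwise changing it, so that
\[
(\vnor)E_i=\Bigl(\frac1d\sum_{m=0}^{d-1}\omega^{m(r-s)}\Bigr)\,\vnor .
\]
By the root-of-unity identity above the scalar is $1$ when $r=s$ and $0$ when $r\ne s$, which is the claim. For part (2), applying $F=\frac1d\sum_{m=0}^{d-1}T_1^m$ to $\vdimn$ scales the first factor by $\omega^{mr_1}$ in each summand, giving
\[
(\vdimn)F=\Bigl(\frac1d\sum_{m=0}^{d-1}\omega^{mr_1}\Bigr)\,\vdimn ,
\]
and the scalar equals $1$ exactly when $r_1=0$ and $0$ when $1\le r_1\le d-1$, i.e. when $r_1>0$. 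I do not expect any genuine obstacle here: the entire content is the diagonal action of the $T_i$ together with the vanishing of the sum over a full set of nontrivial $d$-th roots of unity, and once these are in place both cases follow by direct evaluation of the resulting scalar.
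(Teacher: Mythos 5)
Your proof is correct, and both identities indeed come down to the two facts you isolate: the $T_i$ act diagonally on the basis $\mathcal{B}^{\otimes n}$, and $\frac{1}{d}\sum_{m=0}^{d-1}\omega^{mk}$ equals $1$ or $0$ according as $d\mid k$ or not, which together with $0\leq r,s,r_1\leq d-1$ gives exactly the stated case distinctions. For part (2) this is word for word the paper's own argument. The only divergence is in part (1): the paper does not prove it, but simply cites Lemma~3 of Espinoza and Ryom--Hansen \cite{esry}, where the same character-sum computation is carried out for the operators $E_i$. So your write-up is self-contained where the paper defers to the literature; what this buys is independence from \cite{esry} for this step, at the negligible cost of repeating a one-line geometric-series evaluation, and it has the mild expository advantage of making transparent that parts (1) and (2) are two instances of the same orthogonality phenomenon (for the character $m\mapsto\omega^{m(r-s)}$ and $m\mapsto\omega^{mr_1}$ respectively). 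Your remark that $T_i^mT_{i+1}^{-m}$ and $T_1^m$ act as the identity outside the relevant tensor slots is also the correct justification for reading the two-factor statement of part (1) inside $V^{\otimes n}$; the paper leaves this implicit.
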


\begin{proof}
Claim (1) is \cite[Lemma 3]{esry}. To prove (2), we note that $(v_{i_1}^{r_1}\otimes \dots \otimes v_{i_n}^{r_n})T_1^m = \omega^{mr_1}v_{i_1}^{r_1}\otimes \dots \otimes v_{i_n}^{r_n}$. Hence,
$$
(v_{i_1}^{r_1}\otimes \dots \otimes v_{i_n}^{r_n})F=\left(\frac{1}{d}\sum_{m=0}^{d-1}\omega^{mr_1}\right)v_{i_1}^{r_1}\otimes \dots \otimes v_{i_n}^{r_n}.
$$
Now, from the fact that  $\frac{1}{d}\sum_{m=0}^{d-1}\omega^{mr_1}$ is 1 or 0, depending if $r_1=0$ or not,  claim (2) follows.
\end{proof}

\begin{theorem}\label{tensorrepre}
The mapping   $b_1\mapsto B_1$, $g_i\mapsto G_i$ and $t_i\mapsto T_i$ defines a representation $\Phi$ of $\Y$  in ${\rm End}(V^{\otimes n})$.
\end{theorem}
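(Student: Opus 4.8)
The plan is to verify that the operators $B_1$, $G_i$ and $T_i$ satisfy each of the defining relations (\ref{braid1})--(\ref{quadraticV}) of $\Y$, by evaluating both sides on an arbitrary basis vector $\vdimn$ of $V^{\otimes n}$. A large block of these relations holds for structural reasons and needs no computation: relations (\ref{braid1}) and (\ref{modular2}), together with (\ref{braid3}) for $i\neq 1$, hold because the operators involved either are simultaneously diagonal or act on mutually disjoint tensor factors, and therefore commute; relation (\ref{modular1}) is immediate from $(v_i^r)T^d=\omega^{rd}v_i^r=v_i^r$.

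Next I would treat the relations coupling the framing operators with the braiding operators. For (\ref{th}) it suffices to verify $T_jG_i=G_iT_{s_i(j)}$ on the two tensor slots $i,i+1$ on which $G_i$ acts nontrivially; since $T_j$ is diagonal and $G$ permutes these two slots up to the scalars in its definition, the identity is read off directly from the four cases defining $G$, the point being that $s_i$ transposes the positions $i,i+1$ exactly as $G$ does. For (\ref{tb1}), note that $B_1$ sends $i_1\mapsto -i_1$ but leaves the exponent $r_1$ unchanged, while each $T_i$ depends only on the exponents; hence $T_iB_1=B_1T_i$.

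The braid relations come next. Relation (\ref{braid2}) for $|i-j|=1$ reduces to the braid identity for $G$ on three consecutive factors, a computation of the same shape as those carried out in \cite{gr} and \cite{esry} and insensitive to the framing labels. Relation (\ref{braid4}), $B_1G_1B_1G_1=G_1B_1G_1B_1$, involves only the first two factors, so it is checked by evaluating both sides on $\vnor$ across the cases determined by the signs of $i,j$ and by whether $r,s$ vanish. Finally, for the quadratic relations I would first invoke Lemma \ref{F}, which identifies $E_i$ as the projection onto the subspace $\{r_i=r_{i+1}\}$ and $F$ as the projection onto $\{r_1=0\}$ (consistent with $e_i\mapsto E_i$ and $f_1\mapsto F$ under $t_i\mapsto T_i$, since $T_{i+1}^{-m}=T_{i+1}^{d-m}$). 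Relation (\ref{quadraticU}) is then verified on $V\otimes V$: when $r\neq s$ the operator $E_i$ annihilates the vector and one checks $G^2=\mathrm{id}$, while for $r=s$ one recovers the Hecke quadratic relation $G^2=\mathrm{id}+(\U-\U^{-1})G$. Likewise (\ref{quadraticV}) is checked on the first factor across the cases $r_1=0$ (subdivided into $i_1>0$ and $i_1<0$) and $r_1\neq 0$.

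I expect the genuine work to lie in relation (\ref{braid4}) and the quadratic relation (\ref{quadraticV}) for $B_1$, since these require tracking several superimposed case distinctions --- the sign of each first coordinate together with the vanishing of each framing exponent --- through repeated applications of operators that simultaneously permute indices and flip signs. Keeping these cases organized, rather than any single algebraic manipulation, will be the main obstacle.
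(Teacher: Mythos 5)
Your proposal is correct and follows essentially the same route as the paper's proof: check the defining relations on basis vectors, dispose of the type-$\mathtt{A}$ block of relations by structural arguments and by appeal to \cite{gr} and \cite[Theorem 4]{esry}, note via Lemma \ref{F} that $e_i\mapsto E_i$ and $f_1\mapsto F$, and reduce the genuine work to the case-by-case verification of (\ref{braid4}) and (\ref{quadraticV}) on $V\otimes V$ (cases split by vanishing of the framing exponents and the signs of the indices), exactly as the paper does. The only difference is bookkeeping: the paper cites \cite[Theorem 4]{esry} wholesale for relations (\ref{braid1}), (\ref{braid2}), (\ref{th})--(\ref{modular1}) and (\ref{quadraticU}), while you re-derive several of these directly and defer only (\ref{braid2}); the substantive computation you flag as the main obstacle is precisely the one the paper carries out for its representative cases.
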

\begin{proof}
To prove the theorem it is enough to verify that the operators $G_i$, $T_i$ and  $B_1$ satisfy the defining relations of $\Y$ whenever we  replace $g_i$ by $G_i$, $t_i$ by $T_i$ and  $b_1$ by $B_1$.
Having present \cite[Theorem 4]{esry} follows that (\ref{braid1}), (\ref{braid2}), (\ref{th})--(\ref{modular1}) and (\ref{quadraticU}) it holds for the operators  $G_i$, $T_i$, and $B_1$, and it is easy to check  that the identities (\ref{braid3}) and (\ref{tb1}) hold for these operators too. In order to finish the proof of the theorem we shall prove the relations (\ref{quadraticV}) and (\ref{braid4}). To do that,  it  is enough to  assume   $n=2$ and to prove the relations by evaluating in  $x:=v_i^r\otimes v_j^s$.
\smallbreak
We prove now that relation (\ref{quadraticV}) is valid  for $B_1$. If in $x$ we have $r=0$, we distinguish either $i>0$ or $i<0$. For
$i>0$; we have:
$$
(x)B_1^2=(v_{-i}^{r}\otimes v_j^s)B_1=v_{i}^r\otimes v_j^s+(\V-\V^{-1})v_{-i}^{r}\otimes v_j^s=(x)[1_V^{\otimes 2}+(\V-\V^{-1})B_1].
$$
For $i<0$, we have:
$$
(x)B_1^2
= (v_{-i}^{r}\otimes v_j^s +(\V-\V^{-1})v_i^r \otimes v_j^s) B_1
 =
v_{i}^{r}\otimes v_j^s+(\V-\V^{-1})(v_i^r\otimes v_j^s)B_1
=
(x)[1_V^{\otimes 2}+(\V-\V^{-1})B_1].
$$
On the other hand, if $r\not = 0$ it is clear that $B_1^2=1_V^{\otimes 2}$. Then by Lemma \ref{F} relation (\ref{tb1}) holds in both cases.
\smallbreak
Finally,  we will prove that the identity (\ref{braid4}) holds if we replace $b_1$ by  $B_1$ and $g_1$ by  $G_1$. To do that, we  will distinguish first the cases according to the following exhaustive  values of $r$ and $s$:
\begin{itemize}
\item[(a)] Case $r=s=0$
\item[(b)] Case $r\not=0$ and $r\not=s$
\item[(c)] Case $r=0$ and $s\not=0$
\item[(d)] Case $r\not=0$ and $r=s$.
\end{itemize}
Case (a) holds by \cite{gr}, and case (b) is easy to check. We distinguish now according the values of $i$ and $j$ in the remaining cases (c) and (d). For item (c) we have four cases and eight cases for item (d). We will check only the most representative cases by evaluating in $x=v_i^r \otimes v_j^s$.

\textbf{Case:} $r=0$, $ s\not=0$, $i>0$ and  $j<0 $. We have:
\begin{eqnarray*}
(x)G_1 B_1 G_1B_1 &=& (\vinv)B_1 G_1 B_1 \\
 &=&(\vinvn + (\V - \V^{-1})\vinv )G_1 B_1 \\
 &=&(v_i^r\otimes v_{-j}^{s}+ (\V - \V^{-1})\vnor)B_1\\
 &=& v_{-i}^{r}\otimes v_{-j}^{s} + (\V - \V^{-1})\vnorn\\
 &=&(v_{-j}^{s}\otimes v_{-i}^{r} +(\V -\V^{-1})v_{j}^{s}\otimes v_{-i}^{r} )G_1 \\
 &=&( v_{j}^{s}\otimes v_{-i}^{r} ) B_1 G_1 = (\vnorn)G_1B_1G_1 = (x)B_1 G_1 B_1 G_1.
\end{eqnarray*}

\textbf{Case:} $r=0$, $s\not=0$, $i<0$ and $j<0$. We have:

\begin{eqnarray*}
(x) G_1 B_1 G_1B_1&=& (\vinv)B_1 G_1 B_1 \\
 &=&(\vinvn + (\V -\V^{-1})\vinv )G_1 B_1 \\
 &=&(v_i^r\otimes v_{-j}^{s}+ (\V -\V^{-1})\vnor)B_1 \\
 &=&v_{-i}^{r}\otimes v_{-j}^{s} + (\V -\V^{-1})v_i^r\otimes v_{-j}^{s}+ (\V -\V^{-1})\vnorn + (\V -\V^{-1})^2\vnor\\
 &=&(v_{-j}^{s}\otimes v_{-i}^{r}+(\V -\V^{-1})v_{j}^{s}\otimes v_{-i}^{r} +(\V -\V^{-1})\vinvn \\
 & & + (\V -\V^{-1})^2\vinv)G_1\\
 &=&(v_{j}^{s}\otimes v_{-i}^{r}+ (\V -\V^{-1})\vinv)B_1 G_1\\
 &=& (\vnorn+ (\V -\V^{-1})\vnor)G_1B_1G_1 = (x)B_1 G_1 B_1 G_1.
\end{eqnarray*}

\textbf{Case:} $r\not=0$, $s=r$, $i>j$ and $-j< i$. We have:
\begin{eqnarray*}
(x) G_1 B_1 G_1B_1&=& (\vinv+(\U -\U^{-1})\vnor) B_1 G_1 B_1 \\
 &=&(\vinvn + (\U -\U^{-1})\vnorn )G_1 B_1 \\
 &=&( v_{i}^{r}\otimes v_{-j}^{s}+ (\U -\U^{-1})v_{j}^{s}\otimes v_{-i}^{r})B_1 \\
 &=& v_{-i}^{r}\otimes v_{-j}^{s}+ (\U -\U^{-1})v_{-j}^{s}\otimes v_{-i}^{r} \\
 &=&( v_{-j}^{s}\otimes v_{-i}^{r})G_1 =
(v_{j}^{s}\otimes v_{-i}^{r}) B_1 G_1 =
 (\vnorn)G_1B_1G_1 \\
 & =& (x)B_1 G_1 B_1 G_1.
\end{eqnarray*}

\textbf{Case:} $r\not=0$, $s=r$, $i>j$ and $-j> i$. We have:
\begin{eqnarray*}
(x)G_1 B_1 G_1B_1&=& (\vinv+(\U -1)\vnor) B_1 G_1 B_1 \\
 &=&(\vinvn + (\U -\U^{-1})\vnorn )G_1 B_1 \\
 &=&(v_{i}^{r}\otimes v_{-j}^{s}+ (\U -\U^{-1})\vinvn + (\U -\U^{-1})v_{j}^{s}\otimes v_{-i}^{r} \\
 &  & + (\U -\U^{-1})^2 \vnorn)B_1 \\
 &=&v_{-i}^{r}\otimes v_{-j}^{s}+ (\U -\U^{-1})\vinv + (\U -\U^{-1})v_{-j}^{s}\otimes v_{-i}^{r}+ (\U -\U^{-1})^2 \vnor\\
 &=&(v_{-j}^{s}\otimes v_{-i}^{r}+(\U -\U^{-1})\vnor)G_1\\
 &=&(v_{j}^{s}\otimes v_{-i}^{r}+(\U -\U^{-1})\vnorn)B_1 G_1\\
 & = & (\vnorn)G_1B_1G_1 =(x)B_1 G_1 B_1 G_1.
\end{eqnarray*}

\textbf{Case:} $r\not=0$, $s=r$, $ i>j$ and  $-j=i$. We have:
\begin{eqnarray*}
(x)G_1 B_1 G_1B_1&=& (\vinv+(\U -\U^{-1})\vnor)B_1 G_1 B_1 \\
 &=&(\vinvn + (\U -\U^{-1})\vnorn )G_1 B_1 \\
 &=&(\U v_{i}^{r}\otimes v_{-j}^{s}+ \U (\U -\U^{-1})v_{j}^{s}\otimes v_{-i}^{r})B_1 \\
 &=& \U v_{-i}^{r}\otimes v_{-j}^{s}+ \U (\U -\U^{-1})v_{-j}^{s}\otimes v_{-i}^{r}\\
 &=&( \U v_{-j}^{s}\otimes v_{-i}^{r})G_1 =
(\U v_{j}^{s}\otimes v_{-i}^{r})B_1 G_1 =
(\vnorn) G_1B_1G_1\\
& =& (x)B_1 G_1 B_1 G_1.
\end{eqnarray*}
\end{proof}

\subsection{}

We shall finish the section by proving Proposition \ref{aplicacion}, which is an analogue of \cite[Lemma 3.1.4]{gr}. This proposition will be used in the proof of Theorem \ref{basis} and describes, through $\Phi$, the action of $W_n$ on the basis $\mathcal{B}^{\otimes n}$.
\smallbreak
The defining generators $b_1$ and  $g_i$ of the algebra $\Y$ satisfy the same braid relations as the Coxeter  generators  $\R$ and $\s_i$ of the group $W_n$. Thus, the well--known  Matsumoto Lemma  implies that  if $w_1\ldots w_m$ is a reduced expression of $w\in W_n$, with $w_i\in \{\R, \s_1, \ldots, \s_{n-1}\}$, then the following element $g_w$ is well--defined:
\begin{equation}\label{gw}
g_w := g_{w_1}\cdots  g_{w_m}
\end{equation}
where $g_{w_i} = b_1$, if $w_i=\R$ and $g_{w_i} = g_j$, if  $w_i = \s_j$.

\smallbreak

The notation $\Phi_w$ stands  for the image by $\Phi$ of $g_w\in \Y$. Note that, for $w$, $w^{\prime}\in W_n$ such that $\ell(ww^{\prime}) = \ell(w)+ \ell(w^{\prime})$, we have $\Phi_{ww^{\prime}}= \Phi_w \Phi_{w{^{\prime}}}$.

\begin{proposition}\label{aplicacion}
Let $w\in W_n$ parameterized by $(m_1,\dots,m_n)\in X_n^n$. Then
$$
(v_1^{r_1}\otimes \dots \otimes v_n^{r_n})\Phi_{w}=v_{m_1}^{r_{\vert m_1\vert}}\otimes \dots \otimes v_{m_n}^{r_{\vert m_n\vert}}.
$$
\end{proposition}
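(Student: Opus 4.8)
The plan is to argue by induction on the length $\ell(w)$, reducing each step to the single generator that is stripped off the right end of a reduced word for $w$. For the base case $w=1$, the parameterization is $(1,2,\dots,n)$ and $\Phi_1=\mathrm{id}$; since $|k|=k$ for each $k$, the asserted identity reads $v_1^{r_1}\otimes\cdots\otimes v_n^{r_n}=v_1^{r_1}\otimes\cdots\otimes v_n^{r_n}$, which is trivially true.

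For the inductive step, I would fix a reduced expression $w=w_1\cdots w_m$ with each $w_j\in\{\R_1,\s_1,\dots,\s_{n-1}\}$, and set $s:=w_m$ and $w':=w_1\cdots w_{m-1}$, so that $\ell(w')=\ell(w)-1$ and $\ell(w's)=\ell(w')+\ell(s)$. By the multiplicativity $\Phi_{w's}=\Phi_{w'}\Phi_s$ recorded before the statement, $\Phi_w=\Phi_{w'}\Phi_s$, so it suffices to apply $\Phi_s$ to the vector produced by the induction hypothesis for $w'$. Writing $(m_1',\dots,m_n')$ for the parameterization of $w'$, the induction hypothesis gives
$$
(v_1^{r_1}\otimes\cdots\otimes v_n^{r_n})\Phi_{w'}=v_{m_1'}^{r_{|m_1'|}}\otimes\cdots\otimes v_{m_n'}^{r_{|m_n'|}}.
$$
If $s=\s_i$, then $\ell(w'\s_i)=\ell(w')+1$ forces $m_i'<m_{i+1}'$ by Lemma~\ref{coxeter}, while by (\ref{parametrized}) the element $w=w'\s_i$ is parameterized by the tuple obtained from $(m_1',\dots,m_n')$ by interchanging the $i$th and $(i+1)$st entries. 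Applying $\Phi_{\s_i}=G_i$ to the relevant factors $v_{m_i'}^{r_{|m_i'|}}\otimes v_{m_{i+1}'}^{r_{|m_{i+1}'|}}$ and invoking the definition of $G$, the inequality $m_i'<m_{i+1}'$ places us in the case $i<j,\ r=s$ (if the framings agree) or in the case $r\neq s$ (if they differ); both return precisely the transposed vector $v_{m_{i+1}'}^{r_{|m_{i+1}'|}}\otimes v_{m_i'}^{r_{|m_i'|}}$, which matches the parameterization of $w$. If $s=\R_1$, then $\ell(w'\R_1)=\ell(w')+1$ forces $m_1'>0$, and $w=w'\R_1$ is parameterized by $(-m_1',m_2',\dots,m_n')$. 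Applying $\Phi_{\R_1}=B_1$ to the first factor $v_{m_1'}^{r_{m_1'}}$, the condition $m_1'>0$ puts us in the first branch of $B_1$ (if $r_{m_1'}=0$) or the third (if $r_{m_1'}\neq0$); both send it to $v_{-m_1'}^{r_{m_1'}}$, and since $|-m_1'|=m_1'$ this is exactly the first factor demanded by the parameterization of $w$.

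The bookkeeping of subscripts and framings is routine; the one point deserving care, which I regard as the crux, is that the length-increase criteria of Lemma~\ref{coxeter} are precisely what confine each inductive step to the ``no correction term'' branches of $G$ and $B_1$. Indeed, a length-increasing $\s_i$ forces $m_i'<m_{i+1}'$, excluding both the diagonal case $i=j$ (which would contribute a scalar $\U$) and the descent case $i>j$ (which would contribute a $(\U-\U^{-1})$ summand); and a length-increasing $\R_1$ forces $m_1'>0$, excluding the $i_1<0$ branch of $B_1$ that carries the $(\V-\V^{-1})$ summand. With these exclusions in hand, each step collapses to a single transposition of adjacent tensor factors or a sign change of the leading subscript, matching the parameterization rules (\ref{parametrized}) exactly.
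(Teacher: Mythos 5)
Your proof is correct and follows essentially the same route as the paper's: induction on $\ell(w)$, peeling off the last letter of a reduced word, using the multiplicativity $\Phi_{w's}=\Phi_{w'}\Phi_s$, Lemma~\ref{coxeter} to land in the length-increasing (correction-free) branches of $G_i$ and $B_1$, and the parameterization rules (\ref{parametrized}). If anything, yours is more complete, since the paper writes out only the $\s_i$ case and declares the $\R_1$ case analogous, while you verify both, including the framing subcases.
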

\begin{proof}
The proof follows by induction on the length of $w$. For $l(w)=1$ we have that $w\in \{\R_1,\s_1,\dots, \s_{n-1}\}$,
then the result is direct from the definition of  $\Phi$. Now suppose that the induction hypothesis holds for any $w'\in W_n$ with $l(w')=n-1$ and let $w$ be an element with length $n$. Then we have two cases: $w=w'\mathtt{r}$ or $w=w'\mathtt{s}_i$ for some $w'\in W_n$ with $l(w')=n-1$.  We only present the proof of the case  $w=w'\mathtt{s}_i$, as the proof of the other case  is analogous. Suppose  $w'$ is parameterized by $(m_1,\dots, m_n)\in X_n^n$. Then  by the induction hypothesis we obtain:
$$
(v_1^{r_1}\otimes \dots \otimes v_n^{r_n})\Phi_{w}=(v_1^{r_1}\otimes \dots \otimes v_n^{r_n})\Phi_{w'}G_i=(v_{m_1}^{r_{\vert m_1\vert}}\otimes \dots \otimes v_{m_n}^{r_{\vert m_n\vert}})G_i.
$$
Now, in Lemma \ref{coxeter} we have $m_i<m_{i+1}$, therefore from the definition of $G_i$'s we obtain
$$
(v_1^{r_1}\otimes \dots \otimes v_n^{r_n})\Phi_{w}=v_{m_1}^{r_{\vert m_1\vert}}\otimes \dots \otimes v_{m_{i+1}}^{r_{\vert m_{i+1}\vert}}\otimes v_{m_i}^{r_{\vert m_i\vert}} \otimes\dots v_{m_n}^{r_{\vert m_n\vert}}.
$$
Finally, from (\ref{parametrized}) we have that $w$ is parameterized by
$(m_1,\ldots ,m_{i+1}, m_i,\ldots , m_n)$. Hence the claim follows.
\end{proof}

\section{Linear bases for $\Y$}

We introduce here two linear bases $\mathsf{C}_n$ and $\mathsf{D}_n$ for $\Y$. The first one is used for defining in the next section a  Markov trace on $\Y$, the second one plays a technical role for proving that  $\mathsf{C}_n$ is  a linearly independent set.

\subsection{\it The basis  $\mathsf{D}_n$}

 (Cf.\cite[Sec. 4.1]{chpoIMRN}). Set $\overline{b}_1 := b_1$, and
$$
\overline{b}_k := g_{k-1}\ldots g_1 b_1 g_1\ldots g_{k-1}\quad  \quad \mbox{for all $2\leq k\leq n$.}
$$

For all $1\leq k\leq n$, let us define inductively  the set $N_{d,k}$ by
$$
N_{d,1} := \{t_1^m, \overline{b}_1 t_1^m \,;\, 0 \leq m \leq d-1\}
$$
and
$$
N_{d,k} := \{t_k^m, \overline{b}_{k}t_k^m, g_{k-1} x\,;\, x \in N_{d,k-1},\, 0\leq m\leq d-1\}
\quad \mbox{for all $2 \leq k\leq n$.}
$$

\begin{definition}\label{Dn} \rm
We define $\mathsf{D}_n$ as the subset of $\Y$ formed by the following elements
\begin{equation}
  \mathfrak{n}_1\mathfrak{n}_2\cdots \mathfrak{n}_n
\end{equation}
where $\mathfrak{n}_i\in N_{d,i}$.
\end{definition}
We will prove first that $\mathsf{D}_n$ is a linearly spanning set for $\Y$. To do that we need some formulas of multiplication among the defining generators of $\Y$ and the elements $N_{d,k}$. These are given in Lemmas \ref{relations1D} and \ref{relations2D} below.
 Notice that  every element of $N_{d,k}$ has the form $\mathfrak{n}_{k,j,m}^+$ or  $\mathfrak{n}_{k,j,m}^-$, with $j\leq k$ and $0\leq m\leq d-1$,  where
$$
\mathfrak{n}_{k,k,m}^+:=t_k^m, \qquad \mathfrak{n}_{k,j,m}^+ := g_{k-1}\cdots g_jt_j^m\quad \text{for}\quad j<k
$$
and
$$
\mathfrak{n}_{k,k,m}^-:=\overline{b}_k t_k^m, \qquad \mathfrak{n}_{k,j,m}^- := g_{k-1}\cdots g_j\overline{b}_{j}t_j^m\quad \text{for}\quad j<k.
$$
\begin{lemma}\label{relations1D}
In $\Y$ the following relations holds:
\begin{enumerate}
  \item[(i)]
   $(\overline{b}_nt_n^{\alpha})b_1=b_1(\overline{b}_nt_n^{\alpha})$
  \item[(ii)]
   $(\overline{b}_nt_n^{\alpha})g_j=g_j(\overline{b}_nt_n^{\alpha})$, for all $j<n-1$
  \item[(iii)]
   $(\overline{b}_nt_n^{\alpha})g_{n-1}=  \mathfrak{n}_{n,n-1,\alpha}^{-}+d^{-1}(\U-\U^{-1})\sum_s t_{n-1}^{\alpha-s}\overline{b}_nt_n^{s}$
  \item[(iv)]
  $\mathfrak{n}_{n,k,\alpha}^{\pm}b_1=b_1\mathfrak{n}_{n,k,\alpha}^{\pm}$, if $k\not=1$
  \item[(v)]
   $\mathfrak{n}_{n,k,\alpha}^{+}b_1=\mathfrak{n}_{n,k,\alpha}^{-}$, if $k=1$
  \item[(vi)]
  $\mathfrak{n}_{n,k,\alpha}^{-}b_1=\mathfrak{n}_{n,k,\alpha}^{+}+d^{-1}(\V-\V^{-1})\sum_s\mathfrak{n}_{n,k,s}^{-}$, for $k=1$
  \item [(vii)]
  $\mathfrak{n}_{n,k,\alpha}^{\pm}t_j=\left\{\begin{array}{cc}
  \vspace{3mm}
                                          t_{j-1}\mathfrak{n}_{n,k,\alpha}^{\pm}  & \mbox{for $j>k$}\\
                                          \vspace{3mm}
                                           \mathfrak{n}_{n,k,\alpha+1}^{\pm}  & \mbox{for $j=k$} \\
                                           \vspace{3mm}
                                           t_j\mathfrak{n}_{n,k,\alpha}^{\pm}  & \mbox{for $j<k$.}
                                          \end{array}\right.$

  \end{enumerate}
\begin{proof}
All  relations follow from direct computations.
\end{proof}
\end{lemma}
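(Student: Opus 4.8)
The plan is to reduce all seven identities to three commutation facts about the elements $\overline{b}_{k}$, together with the recursion $\overline{b}_{k}=g_{k-1}\overline{b}_{k-1}g_{k-1}$ (immediate from the definition), the quadratic relations (\ref{quadraticU})--(\ref{quadraticV}), and the elementary rules (\ref{th}), (\ref{tb1}) for moving framings past braiding generators. The three facts I would isolate first are: (A) $\overline{b}_{k}$ commutes with every $t_{j}$; (B) $\overline{b}_{k}$ commutes with $b_{1}$; and (C) $\overline{b}_{k}$ commutes with $g_{j}$ whenever $j\le k-2$. Note that $\overline{b}_{k}$ differs from the element $b_{k}$ of Proposition \ref{braidHBHB} (positive versus inverse conjugation), so these are analogues of, not consequences of, that proposition and must be re-derived.

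For (A): moving $t_{j}$ through the word $g_{k-1}\cdots g_{1}b_{1}g_{1}\cdots g_{k-1}$ by means of (\ref{th}) and (\ref{tb1}) replaces the index $j$ by $\sigma(j)$, where $\sigma=(s_{k-1}\cdots s_{1})(s_{1}\cdots s_{k-1})$; since $s_{i}^{2}=1$ in $W_{n}$ this product telescopes to the identity, so $\overline{b}_{k}t_{j}=t_{j}\overline{b}_{k}$. For (B) I would induct on $k$: the case $k=2$ is exactly relation (\ref{braid4}), and for $k\ge 3$ one has $b_{1}g_{k-1}=g_{k-1}b_{1}$ by (\ref{braid3}) and $b_{1}\overline{b}_{k-1}=\overline{b}_{k-1}b_{1}$ by induction, whence $b_{1}$ slides through $\overline{b}_{k}=g_{k-1}\overline{b}_{k-1}g_{k-1}$. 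For (C) I would again induct on $k$; the case $j\le k-3$ is immediate because $g_{j}$ then commutes with both $g_{k-1}$ and $\overline{b}_{k-1}$, while the boundary case $j=k-2$ is the delicate one. Writing $a=g_{k-2}$, $b=g_{k-1}$, $c=\overline{b}_{k-2}$, one has $\overline{b}_{k}=b\,a\,c\,a\,b$, the braid relation $aba=bab$, and $bc=cb$ (since $\overline{b}_{k-2}$ involves only $g_{1},\dots,g_{k-3}$ and $b_{1}$, all commuting with $g_{k-1}$); then $abacab=(aba)cab=(bab)cab=ba(bc)ab=ba(cb)ab=bac\,bab=bac\,aba=bacaba$, i.e. $g_{k-2}\overline{b}_{k}=\overline{b}_{k}g_{k-2}$.

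With (A)--(C) in hand the identities are short computations. Relation (i) and the $k\ne1$ cases of (iv) follow from (B), respectively from the commutation of $b_{1}$ with $g_{k},\dots,g_{n-1}$ via (\ref{braid3}), together with (\ref{tb1}); relation (ii) follows from (C) together with $s_{j}(n)=n$ for $j\le n-2$. Relation (v) is just $t_{1}b_{1}=b_{1}t_{1}$, and relation (vi) comes from expanding $b_{1}^{2}$ by (\ref{quadraticV}): writing $f_{1}=\tfrac1d\sum_{s}t_{1}^{s}$ and using (\ref{tb1}) produces the prescribed averaged sum after reindexing. For (vii) I would move $t_{j}$ leftwards through the descending word $g_{n-1}\cdots g_{k}$: by (\ref{th}) this conjugates $t_{j}$ by $\tau=s_{n-1}\cdots s_{k}$, and a direct check gives $\tau(j)=j-1$ for $k<j\le n$, while for $j<k$ every $s_{i}$ with $i\ge k$ fixes $j$ and for $j=k$ one absorbs $t_{k}$ into $t_{k}^{\alpha}$; fact (A) lets the same computation pass through the factor $\overline{b}_{k}$ in the $\mathfrak{n}^{-}$ case.

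The only identity carrying genuine content is (iii), and I expect it to be the main obstacle, because it is the one place where a quadratic relation creates a correction term that must be matched to a prescribed averaged sum. Starting from $\overline{b}_{n}t_{n}^{\alpha}g_{n-1}=\overline{b}_{n}g_{n-1}t_{n-1}^{\alpha}$ by (\ref{th}), and using $\overline{b}_{n}=g_{n-1}\overline{b}_{n-1}g_{n-1}$, one computes $\overline{b}_{n}g_{n-1}=g_{n-1}\overline{b}_{n-1}g_{n-1}^{2}$ and substitutes (\ref{quadraticU}). The leading term is $g_{n-1}\overline{b}_{n-1}t_{n-1}^{\alpha}=\mathfrak{n}_{n,n-1,\alpha}^{-}$. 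For the correction term one uses $e_{n-1}g_{n-1}=g_{n-1}e_{n-1}$ to recognise $g_{n-1}\overline{b}_{n-1}e_{n-1}g_{n-1}=\overline{b}_{n}e_{n-1}$, then expands $e_{n-1}=\tfrac1d\sum_{m}t_{n-1}^{m}t_{n}^{d-m}$ and pushes $\overline{b}_{n}$ past the framings by (A); the substitution $s=d-m$ turns the result into $\tfrac1d(\U-\U^{-1})\sum_{s}t_{n-1}^{\alpha-s}\overline{b}_{n}t_{n}^{s}$, as claimed. The bookkeeping of this index shift in (iii), together with the boundary case $j=k-2$ of fact (C), are the two delicate points; everything else is formal manipulation with the defining relations of $\Y$.
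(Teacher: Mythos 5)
Your proposal is correct: facts (A)--(C) hold exactly as you argue (the telescoping of $(s_{k-1}\cdots s_1)(s_1\cdots s_{k-1})$ for (A), the reduction of the boundary case of (C) to $abacab=bacaba$ using only the braid relation and $g_{k-1}\overline{b}_{k-2}=\overline{b}_{k-2}g_{k-1}$, and in (iii) the identification $g_{n-1}\overline{b}_{n-1}e_{n-1}g_{n-1}=\overline{b}_n e_{n-1}$ followed by the substitution $s=d-m$ all check out against the defining relations of $\Y$). The paper's own proof consists solely of the sentence ``All relations follow from direct computations,'' so your write-up is precisely that intended direct verification, organized but not different in approach.
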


\begin{lemma}\label{relations2D}
In $\Y$ we have:
\begin{enumerate}
  \item[(i)] $\mathfrak{n}_{n,k,\alpha}^{-}g_j=
  \left\{\begin{array}{ll}
  \vspace{3mm}
   g_{j-1}\mathfrak{n}_{n,k,\alpha}^{-}  & \mbox{for} \quad j>k\\
                                           \vspace{3mm}
                                          \mathfrak{n}_{n,k+1,\alpha}^{-} & \mbox{for}\quad j=k\\
                                          \vspace{3mm}
                                         \mathfrak{n}_{n,k-1,\alpha}^{-}+d^{-1}(\U-\U^{-1})\sum_s t_{j}^{\alpha-s}\mathfrak{n}_{n,k,s}^{-}  & \mbox{for}\quad j=k-1\\
                                         \vspace{3mm}
                                           g_j\mathfrak{n}_{n,k,\alpha}^{-}& \mbox{for}\quad j<k-1
                                          \end{array}\right.$
\vspace{3mm}
  \item[(ii)]  $\mathfrak{n}_{n,k,\alpha}^{+}g_j=
  \left\{\begin{array}{ll}
  \vspace{3mm}
                                           g_{j-1}\mathfrak{n}_{n,k,\alpha}^{+}  & \mbox{for}\quad j>k\\
                                           \vspace{3mm}
                                          \mathfrak{n}_{n,k+1,\alpha}^{+}+d^{-1}(\U-\U^{-1})\sum_s t_{j}^{\alpha-s}\mathfrak{n}_{n,k,s}^{+}& \mbox{for} \quad $j=k$ \\
                                          \vspace{3mm}
                                          \mathfrak{n}_{n,k-1,\alpha}^{+} & \mbox{for}\quad j=k-1\\
                                          \vspace{3mm}
                                           g_j\mathfrak{n}_{n,k,\alpha}^{+}& \mbox{for} \quad j<k-1.
                                          \end{array}\right.$
\end{enumerate}
\end{lemma}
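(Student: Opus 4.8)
The plan is to prove both parts of Lemma~\ref{relations2D} by direct computation, carefully tracking how the elements $\mathfrak{n}_{n,k,\alpha}^{\pm}$ interact with a generator $g_j$. Recall that for $j<k$ we have $\mathfrak{n}_{n,k,\alpha}^{+}=g_{n-1}\cdots g_k g_{k-1}\cdots g_j t_j^m$ (and similarly for the $-$ version with an extra $\overline{b}_j$ inserted), while $\mathfrak{n}_{n,k,k,\alpha}^{+}=t_k^m$. The key organizing principle is to split into the four regimes $j>k$, $j=k$, $j=k-1$, and $j<k-1$, since in each the generator $g_j$ either commutes past the block, gets absorbed into it, or interacts via the quadratic relation (\ref{quadraticU}). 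First I would treat the far-commuting cases $j>k$ and $j<k-1$, which are purely combinatorial: for $j<k-1$ the generator $g_j$ commutes with every factor in $\mathfrak{n}_{n,k,\alpha}^{\pm}$ by the braid relation (\ref{braid1}) and with $t_j$ by (\ref{th}) (after noting the framing index), yielding $g_j\mathfrak{n}_{n,k,\alpha}^{\pm}$; for $j>k$ a telescoping application of (\ref{braid1}) shifts $g_j$ leftward past the word $g_{n-1}\cdots g_k$ until it emerges as $g_{j-1}$ on the left.

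\textbf{The absorbing and quadratic cases.} The more delicate computations are $j=k$ and $j=k-1$. For part (ii), the case $j=k$ is where the quadratic relation enters: here $g_k$ meets the rightmost braid factor $g_k$ of $\mathfrak{n}_{n,k,\alpha}^{+}$ through $t_k^\alpha g_k$, and applying (\ref{th}) to move $t_k^\alpha$ and then (\ref{quadraticU}) to rewrite $g_k^2=1+(\U-\U^{-1})e_kg_k$ produces the two-term answer, with the sum $d^{-1}(\U-\U^{-1})\sum_s t_j^{\alpha-s}\mathfrak{n}_{n,k,s}^{+}$ arising precisely from expanding $e_k=\tfrac1d\sum_m t_k^m t_{k+1}^{d-m}$ and relabelling the framing exponent. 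The complementary case $j=k-1$ for part (ii) is the \emph{absorbing} one: $g_{k-1}$ lengthens the word $g_{n-1}\cdots g_k$ to $g_{n-1}\cdots g_k g_{k-1}$ with the framing descending to $t_{k-1}$, giving cleanly $\mathfrak{n}_{n,k-1,\alpha}^{+}$ with no correction term because no square is formed. For part (i) the roles of these two cases are swapped: now $j=k$ is absorbing (since the $-$ element carries $\overline{b}_k$, which by Proposition~\ref{braidHBHB}(ii) commutes appropriately and lets $g_k$ be appended), while $j=k-1$ is the quadratic case where $g_{k-1}^2$ must be reduced. I would use the identities of Lemma~\ref{relations1D}, especially (iii) and (vii), together with the commutation of $\overline{b}_k$ with the relevant $g_j$'s, to carry these through.

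\textbf{The main obstacle.} The hardest part will be bookkeeping in the quadratic cases, namely correctly tracking the framing exponents when expanding the idempotent $e_k$ and verifying that the resulting sum collapses into the stated form $d^{-1}(\U-\U^{-1})\sum_s t_j^{\alpha-s}\mathfrak{n}_{n,k,s}^{\pm}$. The subtlety is that moving $t_k^m$ factors from $e_k$ past the braid word $g_{n-1}\cdots g_k$ using (\ref{th}) changes which strand they land on, and one must confirm that after reindexing by $s$ the powers $t_j^{\alpha-s}$ and $t_j^{s}$ distribute exactly as claimed; the presence of the $\overline{b}_j$ factor in the $-$ case adds a further commutation check via $\overline{b}_j t_j = t_j \overline{b}_j$ (part of $b_k f_k = f_k b_k$ and (i)--(ii) of Proposition~\ref{braidHBHB}). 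Once the framing arithmetic is pinned down, each of the eight subcases reduces to a short chain of the defining relations, so I would present one representative quadratic case in full and indicate that the others follow by the same mechanism.
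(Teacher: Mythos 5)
Your overall plan coincides with the paper's proof: a direct computation from the defining relations, with the far cases $j>k$ and $j<k-1$ handled by braid\--relation commutations, and the quadratic relation (\ref{quadraticU}) entering exactly where you say it does --- at $j=k$ in part (ii) and at $j=k-1$ in part (i). The paper likewise writes out only these two quadratic cases (moving $t_k^{\alpha}$ with (\ref{th}), splitting $g^2$ via (\ref{quadraticU}), expanding $e$ and reindexing the framing exponent, precisely as you describe) and declares the remaining cases clear.

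There is, however, one step that would fail as you justify it. For the absorbing case $j=k$ of part (i) you appeal to Proposition~\ref{braidHBHB}(ii) to let ``$g_k$ be appended''. That proposition concerns the conjugates $b_k=g_{k-1}\cdots g_1b_1g_1^{-1}\cdots g_{k-1}^{-1}$, not the elements $\overline{b}_k=g_{k-1}\cdots g_1b_1g_1\cdots g_{k-1}$ occurring in $\mathfrak{n}_{n,k,\alpha}^{-}$, and in any case it only yields commutation with $g_i$ for $i\leq k-2$ or $i\geq k+1$, never for $i=k$. In fact $\overline{b}_k$ does \emph{not} commute with $g_k$: if it did, the computation $\mathfrak{n}_{n,k,\alpha}^{-}g_k=g_{n-1}\cdots g_k\,\overline{b}_k\,g_k\,t_{k+1}^{\alpha}$ would create a factor $g_k^2$ and hence spurious $(\U-\U^{-1})$--terms, contradicting the clean answer $\mathfrak{n}_{n,k+1,\alpha}^{-}$. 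The correct mechanism is not commutation but the recursion built into the definition, $\overline{b}_{k+1}=g_k\overline{b}_kg_k$, which gives $g_{n-1}\cdots g_{k+1}\left(g_k\overline{b}_kg_k\right)t_{k+1}^{\alpha}=\mathfrak{n}_{n,k+1,\alpha}^{-}$ on the nose. A related imprecision appears in your far case $j<k-1$ of part (i): $g_j$ does not commute ``with every factor'' of $\mathfrak{n}_{n,k,\alpha}^{-}$ (it fails against $g_{j\pm1}$ inside $\overline{b}_k$); what is true, and needs the same short braid\--relation argument as Proposition~\ref{braidHBHB}(ii) but carried out for $\overline{b}_k$, is that $g_j$ commutes with the block $\overline{b}_k$ as a whole whenever $j\leq k-2$. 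With these two one\--line repairs your argument goes through and agrees with the paper's.
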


\begin{proof}
In claim (i) we will check only the case $j=k-1$, since the other cases are clear. We have
  \begin{eqnarray*}
    \mathfrak{n}_{n,k,\alpha}^{-}g_{k-1} &=& g_{n-1}\dots g_1b_1g_1 \cdots g_{k-1}t_k^{\alpha} g_{k-1} \\
     &=&g_{n-1}\cdots g_1b_1g_1 \cdots g_{k-1}^2t_{k-1}^{\alpha}  \\
     &=& g_{n-1}\cdots g_1b_1g_1 \cdots g_{k-2}t_{k-1}^{\alpha}+(\U-\U^{-1})g_{n-1}\cdots g_1b_1g_1 \cdots g_{k-1}e_{k-1}t_{k-1}^{\alpha} \\
     &=&g_{n-1}\cdots g_1b_1g_1 \cdots g_{k-2}t_{k-1}^{\alpha}+d^{-1}(\U-\U^{-1})\sum_s g_{n-1}\cdots g_1b_1g_1 \cdots g_{k-1}t_{k}^st_{k-1}^{\alpha-s}  \\
     &=& \mathfrak{n}_{n,k-1,\alpha}^{-}+d^{-1}(\U-\U^{-1})\sum_s t_{k-1}^{\alpha-s}\mathfrak{n}_{n,k,s}^{-}.
  \end{eqnarray*}

The only non-trivial case in claim (ii) is whenever $j=k$. We have
\begin{eqnarray}
  \mathfrak{n}_{n,k,\alpha}^{+}g_k &=& (g_{n-1}\cdots g_k t_k^{\alpha})g_k \nonumber\\
   &=&g_{n-1}\cdots g_k^2 t_{k+1}^{\alpha}  \nonumber \\
   &=&g_{n-1}\cdots g_{k+1} t_{k+1}^{\alpha}+ (\U-\U^{-1}) g_{n-1}\cdots g_{k+1}g_k e_k t_{k+1}^{\alpha} \nonumber\\
   &=&g_{n-1}\cdots g_{k+1} t_{k+1}^{\alpha}+ d^{-1}(\U-\U^{-1})\sum_s g_{n-1}\cdots g_k t_k^s t_{k+1}^{\alpha-s} \nonumber \\
   &=&     \mathfrak{n}_{n,k+1,\alpha}^{+}+d^{-1}(\U-\U^{-1})\sum_s t_{k}^{\alpha-s}\mathfrak{n}_{n,k,s}^{+}.\nonumber
  \end{eqnarray}
\end{proof}

\begin{proposition}\label{spanD}
The set $\mathsf{D}_n$ is a  spanning set for $\Y$.
\end{proposition}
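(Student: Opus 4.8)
The plan is to show that every element of $\Y$ can be written as a $\mathbb K$-linear combination of elements of $\mathsf D_n$, proceeding by induction on $n$. The base case $n=1$ is immediate: by the defining relations (\ref{tb1}), (\ref{modular1}) and (\ref{quadraticV}), the algebra ${\rm Y}_{d,1}^{\mathtt B}$ is spanned by $\{t_1^m, b_1 t_1^m\,;\, 0\le m\le d-1\} = N_{d,1}$, since $b_1^2$ reduces via (\ref{quadraticV}) and higher powers of $t_1$ vanish by (\ref{modular1}). For the inductive step, I would assume that $\mathsf D_{n-1}$ spans $\Z$ and aim to show that an arbitrary product of generators of $\Y$ can be brought into the standard form $\mathfrak n_1\cdots \mathfrak n_n$ with $\mathfrak n_i\in N_{d,i}$.

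The key idea is that $\Y$ is spanned by words of the form $u\,\mathfrak n_n$, where $u$ is a word in the generators of $\Z$ (namely $b_1,g_1,\dots,g_{n-2},t_1,\dots,t_{n-1}$) and $\mathfrak n_n\in N_{d,n}$. To establish this I would take an arbitrary monomial in the generators of $\Y$ and push all occurrences of the ``top'' generators $g_{n-1}$ and $t_n$ to the right, collecting them into a single factor lying in $N_{d,n}$, while what remains on the left stays within the subalgebra $\Z$. The engine driving this reduction is precisely Lemmas \ref{relations1D} and \ref{relations2D}, which describe how an element $\mathfrak n_{n,k,\alpha}^{\pm}$ of $N_{d,n}$ commutes past, or is transformed by, each generator $b_1$, $g_j$ and $t_j$: in every case the result is again a linear combination of terms of the shape (element of $\Z$)$\cdot$(element of $N_{d,n}$), where the $\Z$-factor may acquire extra $t_j$'s with $j\le n-1$ and the $N_{d,n}$-factor stays in $N_{d,n}$, possibly with a new framing exponent summed over $s$. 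Thus repeatedly applying these lemmas lets me move any single $N_{d,n}$-factor leftward through a word over the $\Z$-generators at the cost of producing finitely many terms of the same form, and by iterating I reduce any monomial to the desired form $u\,\mathfrak n_n$ with $u\in\Z$. Finally, I apply the induction hypothesis to $u$ to write it as a combination of $\mathfrak n_1\cdots\mathfrak n_{n-1}$ with $\mathfrak n_i\in N_{d,i}$, giving $\mathfrak n_1\cdots\mathfrak n_{n-1}\mathfrak n_n\in\mathsf D_n$.

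Concretely I would argue as follows. A general element of $\Y$ is a combination of products of generators. Using relation (\ref{th}) I can move all framing generators $t_j$ to the right past the braiding generators (at the cost of permuting their indices), and then collapse powers via (\ref{modular1}); so it suffices to span words of the form $w\cdot t_1^{a_1}\cdots t_n^{a_n}$ with $w$ a word in $b_1,g_1,\dots,g_{n-1}$. Such a $w$ can be written, using the braid relations (\ref{braid1})--(\ref{braid4}) together with (iii)--(iv) of Proposition \ref{braidHBHB}, so that all occurrences of $g_{n-1}$ are gathered; each maximal block involving $g_{n-1}$ produces (after absorbing the relevant $t_j$'s) a factor in $N_{d,n}$, and the quadratic relation (\ref{quadraticU}) for $g_{n-1}^2$ keeps us within linear combinations of such blocks via the idempotent $e_{n-1}$. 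The remaining left part is a word over the $\Z$-generators, to which induction applies.

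\textbf{Main obstacle.} The delicate point, and the step I expect to require the most care, is controlling the \emph{interaction} between a single $N_{d,n}$-factor and a whole word over the $\Z$-generators lying to its left: one must verify that commuting the factor leftward never creates a genuinely new $g_{n-1}$ that cannot be reabsorbed, so that the process terminates. This is exactly what Lemmas \ref{relations1D} and \ref{relations2D} guarantee termwise — in each clause the $N_{d,n}$-factor either commutes unchanged, shifts a framing index, or changes its index $k$ by $\pm1$ while the extra error term is again of the form $t_j^{\bullet}\,\mathfrak n_{n,k,s}^{\pm}$ — so the whole reduction is a finite induction on the index $k$ and on the number of top generators, with no runaway growth. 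Making this termination argument precise, rather than the individual commutation identities (which are the content of the two preceding lemmas), is where the real work lies.
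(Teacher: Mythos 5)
Your proposal is correct and is essentially the paper's own proof: the same induction on $n$, with Lemmas \ref{relations1D} and \ref{relations2D} supplying exactly the fact that right multiplication of any $\mathfrak{n}_n\in N_{d,n}$ by a defining generator of $\Y$ gives a linear combination of terms $w\,\mathfrak{n}'_n$ with $w\in \Z$ and $\mathfrak{n}'_n\in N_{d,n}$, after which the induction hypothesis rewrites the $\Z$-part in terms of $\mathsf{D}_{n-1}$. The termination worry you single out as the main obstacle disappears once the argument is organized as the paper does --- the span of $\mathsf{D}_n$ contains $1$ and is a right ideal, so one simply inducts on the length of a word in the generators, absorbing one generator at a time, and the lemmas guarantee the $\Z$-factor never reacquires a top generator $g_{n-1}$ or $t_n$.
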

\begin{proof}
The proof
is by induction on $n$. Let $\mathfrak{D}_n$ be the  linear subspace of $\Y$ spanned by $\mathsf{D}_n$. The assertion is true for $n=1$, since $\mathsf{D}_1= \mathtt{N}_1$ and obviously ${\rm Y}_{d,1}^{\mathtt B}$ is equal to the space spanned by $N_{d,1}$.
 Assume now that $\Z$ is spanned by $\mathfrak{D}_{n-1}$. Notice that $1\in  \mathfrak{D}_n$. This fact and proving that $\mathfrak{D}_n$ is a right ideal, implies the proposition. Now, we deduce that $\mathfrak{D}_n$ is a right ideal from the hypothesis induction and  Lemmas \ref{relations1D} and \ref{relations2D}. Indeed,
 the multiplication of  $\mathfrak{n}_n\in N_{d,n}$ from the right by  all  defining generators of $\Y$ results in a linear combination of elements of the form $w\mathfrak{n}'_n$, with $\mathfrak{n}'_n\in N_{d,n}$ and $w\in \Z$.
\end{proof}

In order to prove  now that $\mathsf{D}_n$ is a linearly independent set, we will  firstly rewrite its elements in  split form, that is, as the product between the braiding part and the framing part. More precisely, given an element  in $\mathsf{D}_n$, then by using the relations (\ref{th})--(\ref{tb1}), the  framing elements (every power of the $t_j$'s) that appears in this  given element, can be moved to the right. Thus, we deduce that the elements in
$\mathsf{D}_n$ can be written in the following form:
\begin{equation}\label{elementD}
\mathfrak{r}_1\cdots \mathfrak{r}_{n}t_1^{m_1}\cdots t_n^{m_n}
\end{equation}
with $m_k\in {\Bbb Z}/d{\Bbb Z}$ and $\mathfrak{r}_k\in N_k$, where the sets $N_k$  are defined inductively as follows:
$
N_1 := \{1,  b_1 \}
$
and
$$
N_k := \{1, \overline{b}_{k}, g_{k-1} x\,;\, x \in N_{k-1}\}\quad \quad \mbox{for all $2\leq k\leq n$.}
$$
Recall now that  $g_{\s_i}= g_i$ and notice  that $\R_k$ is reduced, so $g_{\R_k}= \overline{b}_{k}$ (see (\ref{gw})). These facts and noting that the  elements of the sets $\mathtt{N}_k$ (see (\ref{NWn})) are reduced, imply that
$$
N_k =\{g_w \,;\, w\in \mathtt{N}_k\}.
$$
Then, the set ${\mathsf D}_n$ can be described  by:
$$
\mathsf{D}_n=\{g_w t_1^{m_1}\cdots t_n^{m_n}\, ;\, w\in W_n,\ (m_1,\dots, m_n)\in (\mathbb{Z}/d\mathbb{Z})^n\}.
$$

Secondly,
we shall use a certain basis $\mathcal{D}$ of $V$ introduced by Espinoza and Ryom--Hansen in \cite{esry}. More precisely, $\mathcal{D}$ consist of the following elements:
$$
u_k^r=\sum_{i=0}^{d-1}\omega^{ir}v_k^i
$$
where $k$ is running $ X_n$ and $0\leq r \leq d-1$.

Notice that  $\mathcal{D}$ is a basis for $V$, since for any fixed $k$ the base change matrix between $\{u_k^r\ ;\ 0\leq r\leq d-1\} $ and $\{v_k^s\ ; \ 0\leq s\leq d-1\}$ is non--singular, see \cite{esry}. Further, it is easy to see that
\begin{equation}\label{corre}
(u_k^r)T=u_k^{r+1}.
\end{equation}

We are now in the position to prove that $\mathsf{D}_n$ is a basis for $\Y.$
\begin{theorem}\label{basis}
$\mathsf{D}_n$ is a linear basis for $\Y$. Hence the dimension of $\Y$ is $2^nd^nn!$.
\end{theorem}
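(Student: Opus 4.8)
The plan is to combine the spanning statement of Proposition~\ref{spanD} with a linear independence argument performed inside the tensorial representation $\Phi$ of Theorem~\ref{tensorrepre}. Since $\mathsf{D}_n$ already spans $\Y$, it suffices to show that its elements are linearly independent. Recall from the discussion preceding the theorem that
$$
\mathsf{D}_n=\{g_w t_1^{m_1}\cdots t_n^{m_n}\,;\, w\in W_n,\ (m_1,\dots, m_n)\in (\mathbb{Z}/d\mathbb{Z})^n\},
$$
so $\mathsf{D}_n$ is indexed by $W_n\times(\mathbb{Z}/d\mathbb{Z})^n$, a set of cardinality $2^nn!\cdot d^n$. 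Thus, establishing linear independence will at once show that $\mathsf{D}_n$ is a basis and that $\dim\Y=2^nd^nn!$. Because $\Phi$ is a linear map, it is enough to prove that the endomorphisms $\Phi(g_w t_1^{m_1}\cdots t_n^{m_n})$ are linearly independent in ${\rm End}(V^{\otimes n})$; images being independent forces the preimages to be independent.

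First I would evaluate all of these operators on the \emph{single} vector $u_1^0\otimes\cdots\otimes u_n^0$ of the basis $\mathcal{D}^{\otimes n}$. Writing $u_k^0=\sum_i v_k^i$, expanding $u_1^0\otimes\cdots\otimes u_n^0=\sum_{(i_1,\dots,i_n)}v_1^{i_1}\otimes\cdots\otimes v_n^{i_n}$, and applying Proposition~\ref{aplicacion} term by term, I obtain (if $w$ is parameterized by $(k_1,\dots,k_n)$)
$$
(u_1^0\otimes\cdots\otimes u_n^0)\Phi_w=\sum_{(i_1,\dots,i_n)}v_{k_1}^{i_{\vert k_1\vert}}\otimes\cdots\otimes v_{k_n}^{i_{\vert k_n\vert}}.
$$
Since $(\vert k_1\vert,\dots,\vert k_n\vert)$ is a permutation of $(1,\dots,n)$, reindexing the exponents shows that this sum reassembles into the clean identity $(u_1^0\otimes\cdots\otimes u_n^0)\Phi_w=u_{k_1}^0\otimes\cdots\otimes u_{k_n}^0$. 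Applying next the framing operators and using the shift property (\ref{corre}), namely $(u_k^r)T=u_k^{r+1}$, gives
$$
(u_1^0\otimes\cdots\otimes u_n^0)\Phi(g_w t_1^{m_1}\cdots t_n^{m_n})=u_{k_1}^{m_1}\otimes\cdots\otimes u_{k_n}^{m_n},
$$
a single basis vector of $\mathcal{D}^{\otimes n}$.

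The final step is to observe that the assignment $(w,(m_1,\dots,m_n))\mapsto u_{k_1}^{m_1}\otimes\cdots\otimes u_{k_n}^{m_n}$ is injective: the lower indices $(k_1,\dots,k_n)$ recover $w$ through the parameterization bijection of Subsection~\ref{realized1}, while the upper indices recover the framing exponents $(m_1,\dots,m_n)$. Hence distinct index pairs are sent to distinct basis vectors, so any relation $\sum c_{w,\mathbf{m}}\,\Phi(g_w t_1^{m_1}\cdots t_n^{m_n})=0$, evaluated on $u_1^0\otimes\cdots\otimes u_n^0$, forces every $c_{w,\mathbf{m}}$ to vanish. This yields the linear independence of the operators, hence of $\mathsf{D}_n$, and the theorem follows. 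The main obstacle is establishing the correction-term-free identity $(u_1^0\otimes\cdots\otimes u_n^0)\Phi_w=u_{k_1}^0\otimes\cdots\otimes u_{k_n}^0$: it works precisely because Proposition~\ref{aplicacion} applies each $G_i$ only across an ascent $m_i<m_{i+1}$, so the Hecke correction $(\U-\U^{-1})$ never appears, and summing over all exponents then restores the $u^0$ factors; once this straightening is in place, the rest of the argument is the routine injectivity bookkeeping above.
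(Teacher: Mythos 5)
Your proposal is correct and follows essentially the same route as the paper's proof: spanning from Proposition~\ref{spanD}, then linear independence by pushing the split-form elements $g_w t_1^{m_1}\cdots t_n^{m_n}$ through $\Phi$, evaluating on $u_1^0\otimes\cdots\otimes u_n^0$, and using Proposition~\ref{aplicacion} together with $(u_k^r)T=u_k^{r+1}$ to land on distinct vectors of the basis $\mathcal{D}^{\otimes n}$. In fact you supply slightly more detail than the paper at the reindexing step showing $(u_1^0\otimes\cdots\otimes u_n^0)\Phi_w=u_{k_1}^0\otimes\cdots\otimes u_{k_n}^0$, which the paper asserts directly.
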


\begin{proof}
According to Proposition \ref{spanD} we only need  to  prove  that  $\mathsf{D}_n$ is a linearly independent set. Indeed, suppose that we have a linear combination in the form:
$$
\sum_{c\in \mathsf{D}_n}\lambda_cc = 0.
$$
The proof follows by proving that  $\lambda_c=0$ for all $c\in \mathsf{D}_n$. Now, using the expression (\ref{elementD}) for the elements of ${\mathsf D}_n$ and  applying $\Phi$ to the above equation,  we obtain the  following equation:
\begin{equation}\label{principal}
\sum_{m, w}\lambda_{m,w}\Phi_w T_1^{m_1}\cdots T_n^{m_n}=0
\end{equation}
where $w$ runs in $W_n$ and $m=(m_1,\dots, m_n)$ runs in $({\Bbb Z}/d{\Bbb Z})^n$.

Now, set   $w\in W_n$ parameterized by $(i_1,\dots, i_n)\in X_n^n$. Then from Lemma \ref{aplicacion} and the definition of the elements $u_k^r$'s, we get
$$
(u_1^0\otimes \dots \otimes u_n^0)\Phi_w=u_{i_1}^0\otimes \dots \otimes u_{i_n}^0.
$$
On the other hand, by using  (\ref{corre}) we have:
$$
(u_{i_1}^0\otimes \cdots \otimes u_{i_n}^0)T_1^{m_1}\cdots T_n^{m_n}
=
u_{i_1}^{m_1}\otimes \cdots \otimes u_{i_n}^{m_n}
$$
where $m:= (m_1, \ldots , m_n)$ runs in $({\Bbb Z}/d{\Bbb Z})^n$.
Thus, evaluating equation (\ref{principal})
 in $u_1^0\otimes \cdots \otimes u_n^0$ we obtain
$$
\sum_{m, i}\lambda_{m,i} u_{i_1}^{m_1}\otimes \dots \otimes u_{i_n}^{m_n}=0
$$
where $i:=(i_1, \ldots i_n)$ runs in $X_n^n$ and $m:=(m_1,\dots, m_n)$ runs in $({\Bbb Z}/d{\Bbb Z})^n$. Therefore, $\lambda_{m, i} = 0$ for all $i$ and $m$ since the left side of the last equation is a linear combination of elements of the basis $\mathcal{D}^{\otimes n}$.
\end{proof}
In particular, the above theorem implies the following corollary.
\begin{corollary}\label{faithful}
The representation $\Phi$ is faithful.
\end{corollary}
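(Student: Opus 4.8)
The plan is to exploit the fact that $\Phi$ is an algebra homomorphism, so that faithfulness is equivalent to the injectivity statement $\ker\Phi = 0$. Since Theorem \ref{basis} guarantees that $\mathsf{D}_n$ is a basis of $\Y$, I would take an arbitrary $x\in\ker\Phi$, expand it in this basis using the split form (\ref{elementD}), and show that every coefficient must vanish. Concretely, writing $x=\sum_{m,w}\lambda_{m,w}\,g_w t_1^{m_1}\cdots t_n^{m_n}$ with $w$ ranging over $W_n$ and $m=(m_1,\dots,m_n)$ over $({\Bbb Z}/d{\Bbb Z})^n$, one has $\Phi(x)=\sum_{m,w}\lambda_{m,w}\,\Phi_w T_1^{m_1}\cdots T_n^{m_n}=0$ in ${\rm End}(V^{\otimes n})$, which is exactly the left-hand side of equation (\ref{principal}).

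The central observation is that the proof of Theorem \ref{basis} already contains everything required: what that argument really establishes is not merely the linear independence of $\mathsf{D}_n$ inside $\Y$, but the stronger fact that the operators $\Phi_w T_1^{m_1}\cdots T_n^{m_n}$ are linearly independent in ${\rm End}(V^{\otimes n})$. Indeed, given any vanishing combination of these operators, one evaluates on the vector $u_1^0\otimes\cdots\otimes u_n^0$; by Proposition \ref{aplicacion} together with the relation (\ref{corre}), the result is $\sum_{m,i}\lambda_{m,i}\,u_{i_1}^{m_1}\otimes\cdots\otimes u_{i_n}^{m_n}$, a linear combination of \emph{distinct} members of the basis $\mathcal{D}^{\otimes n}$ of $V^{\otimes n}$. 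Hence every $\lambda_{m,i}$, and therefore every $\lambda_{m,w}$, is zero, which forces $x=0$ and yields faithfulness.

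I expect no genuine obstacle here, since the hard analytic content was discharged in Theorem \ref{basis}. The only subtlety worth flagging is the logical direction of the argument: the proof of the theorem begins from a relation assumed to hold in $\Y$ and then applies $\Phi$, whereas the corollary begins directly from $\Phi(x)=0$ in ${\rm End}(V^{\otimes n})$. Because the evaluation argument at $u_1^0\otimes\cdots\otimes u_n^0$ uses only the vanishing of the operator in ${\rm End}(V^{\otimes n})$ and never the prior vanishing of a combination in $\Y$, it applies verbatim in this reversed setting. Consequently the corollary follows immediately, and the write-up need only record that the linear-independence computation performed in Theorem \ref{basis} is precisely the injectivity of $\Phi$ on the basis $\mathsf{D}_n$.
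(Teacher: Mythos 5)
Your proposal is correct and is essentially the paper's own argument: the paper derives the corollary directly from Theorem \ref{basis}, whose proof (as you observe) establishes that the operators $\Phi_w T_1^{m_1}\cdots T_n^{m_n}$ are linearly independent in ${\rm End}(V^{\otimes n})$, so that together with the spanning property of $\mathsf{D}_n$ any element of $\ker\Phi$ must vanish. Your remark about the direction of the implication correctly identifies why the theorem's computation applies verbatim to give injectivity.
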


\subsection{\it The basis $\mathsf{C}_n$}

For all $1\leq k\leq n$, let us define inductively  the sets $M_{d,k}$ by
$$
M_{d,1} = \{t_1^m, t_1^m b_1 \,;\, 0 \leq m \leq d-1\}
$$
and
$$
M_{d,k}=\{t_k^m, t_k^mb_{k}, g_{k-1} x\,;\, x \in M_{d,k-1},\, 0\leq m\leq d-1\}
\quad \quad \mbox{for all $2\leq k\leq n$.}
$$

\begin{definition}\label{Cn}\rm
We define $\mathsf{C}_n$ as the subset of $\Y$ formed by the following elements:
\begin{equation}\label{mon1}
  \mathfrak{m}_1\mathfrak{m}_2\cdots \mathfrak{m}_n
\end{equation}
where $\mathfrak{m}_i\in M_{d,i}$.
\end{definition}

To prove that  $\mathsf{C}_n$ is a linearly spanning set  we will need some formulas of multiplication among the defining generators of $\Y$ and the elements $M_{d,k}$. These are given in Lemmas \ref{relations1C}--\ref{relations3C} below.
 Now notice that  every element of $M_{d,k}$ has the form $\mathfrak{m}_{k,j,m}^+$ or  $\mathfrak{m}_{k,j,m}^-$ with $j\leq k$ and $0\leq m\leq d-1$,  where
$$
\mathfrak{m}_{k,k,m}^+:=t_k^m, \qquad \mathfrak{m}_{k,j,m}^+ := g_{k-1}\cdots g_jt_j^m\quad \text{for}\ j<k
$$
and
$$
\mathfrak{m}_{k,k,m}^-:=t_k^mb_k, \qquad \mathfrak{m}_{k,j,m}^- := g_{k-1}\cdots g_jb_{j}t_j^m\quad \text{for}\ j<k.
$$

\begin{lemma}\label{relations1C}
The following hold:
 \begin{enumerate}
 \item[(i)]
 $\mathfrak{m}_{n,k,m}^{\pm}t_j=\left\{\begin{array}{lc}
 t_{j-1}\mathfrak{m}_{n,k,m}^{\pm}  & \mbox{for $j>k$}\vspace{3mm}\\
\mathfrak{m}_{n,k,m+1}^{\pm}  & \mbox{for $j=k$} \vspace{3mm}\\
t_j\mathfrak{m}_{n,k,m}^{\pm}  & \mbox{for $j<k$}
\end{array}\right.\vspace{3mm}
$
\item[(ii)]
$
t_n\mathfrak{m}_{n,k,m}^{\pm}=\mathfrak{m}_{n,k,m+1}^{\pm}.
$
 \end{enumerate}
\end{lemma}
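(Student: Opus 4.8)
The plan is to establish Lemma \ref{relations1C} by direct computation, treating each element $\mathfrak{m}_{n,k,m}^{\pm}$ according to its explicit expression in terms of the generators, and systematically pushing framing generators through the braiding generators using the defining relations (\ref{th}) and (\ref{tb1}). Recall that the superscript $+$ gives $\mathfrak{m}_{n,k,m}^{+} = g_{n-1}\cdots g_k t_k^m$ (with $\mathfrak{m}_{n,n,m}^{+} = t_n^m$) and the superscript $-$ gives $\mathfrak{m}_{n,k,m}^{-} = g_{n-1}\cdots g_k b_k t_k^m$ (with $\mathfrak{m}_{n,n,m}^{-} = t_n^m b_n$). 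The governing principle throughout is the commutation rule $t_j g_i = g_i t_{\s_i(j)}$, which moves an index-$j$ framing past $g_i$ to an index-$\s_i(j)$ framing, together with the fact that the $t_j$ commute with $b_k$ whenever $j\neq k$, in view of $b_k f_k = f_k b_k$ and relation (\ref{tb1}).

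For part (i), I would split into the three index regimes. When $j > k$, the framing $t_j$ must travel left past the block $g_{n-1}\cdots g_k$; each generator $g_i$ in this block (with $k \le i \le n-1$) sends the index down by one via $\s_i$, so after passing the whole block the index $j$ becomes $j-1$, yielding $t_{j-1}\mathfrak{m}_{n,k,m}^{\pm}$. Here one checks that in the minus case the framing $t_j$ commutes with $b_k$ since $j>k$. When $j=k$, the framing $t_k$ meets the trailing $t_k^m$ (in the plus case) or $b_k t_k^m$ (in the minus case) directly on the right end; since $t_k$ commutes with $b_k$ it simply multiplies with $t_k^m$ to give $t_k^{m+1}$, producing $\mathfrak{m}_{n,k,m+1}^{\pm}$. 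When $j < k$, the framing $t_j$ commutes with every $g_i$ in the block $g_{n-1}\cdots g_k$ because $\s_i(j)=j$ for $i \ge k > j$, and it commutes with $b_k$ and with $t_k^m$, so it passes freely to the left as $t_j\mathfrak{m}_{n,k,m}^{\pm}$. Part (ii) is the special boundary instance where $t_n$ is multiplied from the left: since $n \ge k$ always, the index $n$ is never lowered below the block (for $k<n$ the generator $g_{n-1}$ raises index $n$ to itself only when acting on the appropriate side, and one verifies $t_n$ threads through to meet the trailing power), and the computation collapses to $\mathfrak{m}_{n,k,m+1}^{\pm}$.

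I would organize the writeup so that the minus cases are handled alongside the plus cases, pointing out at each step that the extra factor $b_k$ introduces no obstruction because of the commutation $t_j b_k = b_k t_j$ for $j\neq k$ and the identity $b_k f_k = f_k b_k$. Since every step is an application of a single defining relation, the proof is genuinely a matter of bookkeeping the index shifts under the maps $\s_i$, and I would present it compactly rather than expanding every product.

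The main obstacle, such as it is, will be purely notational: keeping careful track of how the index of a framing generator is transformed as it crosses the descending chain $g_{n-1}\cdots g_k$, and verifying at the boundary $j=k$ in part (ii) that the index $n$ does not get shifted out of range before it reaches the trailing framing. These are not deep points, but they require writing the permutation action of each $\s_i$ explicitly to avoid off-by-one errors. Accordingly I would state plainly that all the relations follow from direct computation using (\ref{th}) and (\ref{tb1}), and include the index-tracking for the representative case $j>k$ to illustrate the mechanism.
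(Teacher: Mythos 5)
Your proposal is correct in substance and takes exactly the paper's route: the paper's entire proof of this lemma is the one--line remark that the computation is straightforward, and your plan --- push each framing through the chain $g_{n-1}\cdots g_k$ with relation (\ref{th}) and use that the $t_j$'s commute with $b_k$ --- is precisely that computation spelled out. However, fix one concrete bookkeeping error before writing it up: you have the two index--tracking mechanisms swapped. In part (i), case $j>k$, the framing $t_j$ moves \emph{left} through the chain (it crosses $g_k$ first and $g_{n-1}$ last), and $s_i$ fixes the index $j$ for every $i\neq j-1$; only the single factor $g_{j-1}$ lowers the index, $j\mapsto j-1$. Your statement that \emph{each} $g_i$ with $k\le i\le n-1$ lowers the index by one would, taken literally, produce $t_{j-(n-k)}$ rather than the correct $t_{j-1}$. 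The step-by-step lowering you describe is instead the mechanism of part (ii): there $t_n$ moves \emph{right} through the chain, crossing $g_{n-1}$ first, and the index drops at every crossing, $n\mapsto n-1\mapsto\cdots\mapsto k$, so $t_n$ arrives as $t_k$ and merges with $t_k^m$ to give $t_k^{m+1}$ --- contrary to your parenthetical claim that the index ``is never lowered.'' Finally, the commutation $t_jb_k=b_kt_j$ that you invoke holds for \emph{all} $j$ (this is Proposition \ref{braidHBHB}(i), which is the right citation here, rather than $b_kf_k=f_kb_k$ and relation (\ref{tb1})), so your hedge ``whenever $j\neq k$'' is unnecessary; in particular the minus case of (ii) at $k=n$, namely $t_n\cdot t_n^mb_n=t_n^{m+1}b_n$, uses exactly the $j=k$ instance. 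With these descriptions corrected, the proof is the paper's.
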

\begin{proof}
The proof  is straightforward.
\end{proof}

\begin{lemma}\label{relations2C}
The following hold:
$$
\mathfrak{m}_{n,k,m}^{\pm}g_j=
\left\{\begin{array}{ll}
g_{j-1}\mathfrak{m}_{n,k,m}^{\pm}  & \mbox{for $j>k$}\vspace{3mm}\\
\mathfrak{m}_{n,k+1,m}^{\pm}+d^{-1}(\U-\U^{-1})\sum_s t_{j}^{m-s}\mathfrak{m}_{n,k,s}^{\pm}  & \mbox{for $j=k$}\vspace{3mm} \\
\mathfrak{m}_{n,k-1,m}^{\pm}  & \mbox{for $j=k-1$}\vspace{3mm}\\
g_j\mathfrak{m}_{n,k,m}^{\pm}& \mbox{for $j<k-1$}.
\end{array}\right.
$$
\end{lemma}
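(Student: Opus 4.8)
The plan is to prove each of the four cases by a direct computation, exploiting the close parallel with Lemma~\ref{relations2D}: the only structural difference between the two families is that the $\mathfrak{m}^{-}$ elements use $b_k$ rather than $\overline{b}_k$, so the braid slides will be driven by the identities $b_{k+1}=g_kb_kg_k^{-1}$ and $b_kg_{k-1}=g_{k-1}b_{k-1}$ coming from the definition $b_k=g_{k-1}\cdots g_1b_1g_1^{-1}\cdots g_{k-1}^{-1}$. In every case I would first write $\mathfrak{m}_{n,k,m}^{+}=g_{n-1}\cdots g_kt_k^m$ and $\mathfrak{m}_{n,k,m}^{-}=g_{n-1}\cdots g_kb_kt_k^m$, and then push $g_j$ leftwards through the framing $t_k^m$ using relation~(\ref{th}) and through the braiding word using the braid relations~(\ref{braid2}) together with the commutation relations of Proposition~\ref{braidHBHB}.

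For $j>k$ the framing $t_k^m$ commutes with $g_j$ by~(\ref{th}) (since $\s_j$ fixes $k$), and in the minus case $b_k$ commutes with $g_j$ by Proposition~\ref{braidHBHB}(ii); it then remains to verify the pure-braid identity $g_{n-1}\cdots g_kg_j=g_{j-1}g_{n-1}\cdots g_k$, which follows by moving $g_j$ to the left, applying $g_jg_{j-1}g_j=g_{j-1}g_jg_{j-1}$ once, and commuting $g_{j-1}$ past the higher-index generators. For $j=k-1$ the relation $t_k^mg_{k-1}=g_{k-1}t_{k-1}^m$ together with $b_kg_{k-1}=g_{k-1}b_{k-1}$ collapses the word directly to $\mathfrak{m}_{n,k-1,m}^{\pm}$. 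For $j<k-1$ everything commutes past $g_j$ (the framings by~(\ref{th}), the word $g_{n-1}\cdots g_k$ because $j\le k-2$, and $b_k$ by Proposition~\ref{braidHBHB}(ii)), giving $g_j\mathfrak{m}_{n,k,m}^{\pm}$.

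The case $j=k$ is the crux, and the only place where the quadratic relation enters. Using $t_k^mg_k=g_kt_{k+1}^m$ I would produce a factor $g_k^2$ and expand it by~(\ref{quadraticU}) as $1+(\U-\U^{-1})e_kg_k$; the leading term yields $\mathfrak{m}_{n,k+1,m}^{\pm}$ (for the minus sign this uses $g_kb_kg_k^{-1}=b_{k+1}$), while the correction term carries the idempotent $e_k$. The main obstacle here is purely the framing bookkeeping: I would commute $e_k$ past $g_k$ (they commute), and in the minus case commute $b_k$ and $b_{k+1}$ past all framings by Proposition~\ref{braidHBHB}(i), then substitute $e_k=d^{-1}\sum_s t_k^st_{k+1}^{d-s}$ and reindex the resulting sum modulo $d$ via $t_{k+1}^d=1$ (relation~(\ref{modular1})) so that it matches $d^{-1}(\U-\U^{-1})\sum_s t_k^{m-s}\mathfrak{m}_{n,k,s}^{\pm}$. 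I expect this reindexing, rather than any conceptual step, to be where care is needed, exactly as in the $j=k$ computation of Lemma~\ref{relations2D}.

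Since the minus-sign computations reduce to the plus-sign ones after the $b$-slides above, and since $b_k$ is central with respect to all framings, the two signs can be treated by a single argument in each case; I would present $j=k$ in full and indicate the three remaining cases as the straightforward manipulations described.
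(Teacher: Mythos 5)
Your proposal is correct and follows essentially the same route as the paper: the only non-trivial case $j=k$ is handled by sliding the framing through $g_k$, splitting off a factor of $g_k$ via the quadratic/inverse relation into a leading term $\mathfrak{m}_{n,k+1,m}^{\pm}$ plus an $e_k$-correction, and then expanding $e_k$ and reindexing modulo $d$. The only cosmetic difference is that the paper dispatches the plus case by citing Lemma~\ref{relations2D}(ii) (which you in effect reprove) and, in the minus case, expands $b_k$ as $g_{k-1}\cdots g_1b_1g_1^{-1}\cdots g_{k-1}^{-1}$ and splits the trailing $g_k$ as $g_k^{-1}+(\U-\U^{-1})e_k$, whereas you keep $b_k$ as a block and slide it with $g_kb_kg_k^{-1}=b_{k+1}$ before applying relation~(\ref{quadraticU}) --- two equivalent forms of the same computation.
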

\begin{proof}
 The positive case follows directly from Lemma \ref{relations2D} (ii), since $\mathfrak{m}_{n,k,m}^{+}=\mathfrak{n}_{n,k,m}^{+}$. For the negative case we have:
   \begin{eqnarray*}
    \mathfrak{m}_{n,k,m}^{-}g_k &=&g_{n-1}\cdots g_{k}b_kt_k^{m}g_k  \\
     &=& g_{n-1}\cdots g_{1}b_1g_1^{-1}\cdots g_{k-1}^{-1} t_{k}^{m}g_k \\
     &=& g_{n-1}\cdots g_{1}b_1g_1^{-1}\cdots g_{k-1}^{-1}g_k t_{k+1}^{m} \\
     &=& g_{n-1}\cdots g_{1}b_1g_1^{-1}\cdots g_{k-1}^{-1}g_k^{-1} t_{k+1}^{m}+(\U-\U^{-1})g_{n-1}\cdots g_{1}b_1g_1^{-1}\cdots g_{k-1}^{-1}e_k t_{k+1}^{m} \\
     &=& g_{n-1}\cdots g_{1}b_1g_1^{-1}\cdots g_{k-1}^{-1}g_k^{-1} t_{k+1}^{m} +\frac{1}{d}(\U-\U^{-1})\sum_sg_{n-1}\cdots g_{1}b_1g_1^{-1}\cdots g_{k-1}^{-1}t_k^s t_{k+1}^{m-s} \\
        &=& \mathfrak{m}_{n,k+1,m}^{-}+\frac{1}{d}(\U-\U^{-1})\sum_s t_{k}^{m-s} \mathfrak{m}_{n,k,s}^{-}.
  \end{eqnarray*}
\end{proof}

\begin{lemma}\label{relations3C}
The following hold:
\begin{enumerate}
\item[(i)]
$
\mathfrak{m}_{n,k,m}^+b_1=
\left\{\begin{array}{cc}
\mathfrak{m}_{n,k,m}^-  & \mbox{for $k=1$}\vspace{3mm}\\

 b_1\mathfrak{m}_{n,k,m}^+  & \mbox{for $k>1$}
\end{array}\right.
$
\bigskip
\item[(ii)] $\mathfrak{m}_{n,k,m}^-b_1=\left\{\begin{array}{cc}
\mathfrak{m}_{n,k,m}^+ +d^{-1}(\V-\V^{-1})\sum_s\mathfrak{m}_{n,k,s}^+.  & \mbox{for $k=1$}\vspace{3mm}\\

 b_1\mathfrak{m}_{n,k,m}^- + d^{-1}(\U-\U^{-1})\sum_s \mathfrak{p}_{k,s} & \mbox{for $k>1$}
\end{array}\right.$\vspace{3mm}\\
where $\mathfrak{p}_{k,s}[b_1t_1^{m-s}g_1^{-1}\dots g_{k-2}^{-1}\mathfrak{m}_{n,1,s}^--t_1^{m-s}g_1^{-1}\dots
 g_{k-2}^{-1}(\mathfrak{m}_{n,1,s}^-b_1 )]$. In particular we have:
$$
(b_nt_n^{m})b_1=b_1(b_nt_n^{m})+\frac{1}{d}(\U-\U^{-1})\sum_s\mathfrak{p}_{n,s}.
$$
\end{enumerate}
\end{lemma}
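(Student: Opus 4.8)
The plan is to prove each of the four cases of Lemma \ref{relations3C} by direct computation, unwinding the definitions of $\mathfrak{m}_{n,k,m}^{\pm}$ and applying the defining relations of $\Y$ together with the identities established in Proposition \ref{braidHBHB}. The two cases with $k=1$ are the genuine computations, while the cases with $k>1$ are reductions that exploit the commutation of $b_1$ past the leading $g_j$'s.

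For claim (i), I would start from $\mathfrak{m}_{n,1,m}^+ = g_{n-1}\cdots g_1 t_1^m$ and compute $\mathfrak{m}_{n,1,m}^+ b_1$. First I would use relation (\ref{tb1}) to move $t_1^m$ past $b_1$ (they commute), then recognize $g_{n-1}\cdots g_1 b_1 = b_1$-conjugate structure; more precisely, since $b_1 = \overline{b}_1$, the product $g_{n-1}\cdots g_1 b_1 t_1^m$ is exactly $\mathfrak{m}_{n,1,m}^-$ once one matches it against the definition $\mathfrak{m}_{n,1,m}^- = g_{n-1}\cdots g_1 b_1 t_1^m$. For $k>1$, the key point is that $b_1$ commutes with $g_{k-1}\cdots g_j$ whenever $j>1$, which follows from relation (\ref{braid3}); this immediately gives $\mathfrak{m}_{n,k,m}^+ b_1 = b_1 \mathfrak{m}_{n,k,m}^+$.

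For claim (ii) with $k=1$, I would compute $\mathfrak{m}_{n,1,m}^- b_1 = g_{n-1}\cdots g_1 b_1 t_1^m b_1$. Moving $t_1^m$ past $b_1$ via (\ref{tb1}) and then using the quadratic relation (\ref{quadraticV}), namely $b_1^2 = 1 + (\V-\V^{-1})f_1 b_1$, I would get a term $g_{n-1}\cdots g_1 t_1^m$ (which is $\mathfrak{m}_{n,1,m}^+$) plus a term involving $f_1 b_1$; expanding $f_1 = \frac{1}{d}\sum_s t_1^s$ and reindexing should produce the sum $d^{-1}(\V-\V^{-1})\sum_s \mathfrak{m}_{n,1,s}^+$. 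The main obstacle is the case $k>1$ of claim (ii): here $b_1$ does \emph{not} commute cleanly past the structure because $\mathfrak{m}_{n,k,m}^- = g_{n-1}\cdots g_k b_k t_k^m$ involves $b_k = g_{k-1}\cdots g_1 b_1 g_1^{-1}\cdots g_{k-1}^{-1}$, so the trailing inverse braids $g_1^{-1}\cdots g_{k-1}^{-1}$ obstruct a direct passage of $b_1$. I expect one must rewrite $b_k$ using its definition, commute $b_1$ through the portion built from $g_2,\dots,g_{k-1}$ (which commute with $b_1$), and then handle the interaction of $b_1$ with the single generator $g_1$ via relation (\ref{braid4}) and the inverse formula (\ref{inverse}); the correction term $\mathfrak{p}_{k,s}$ is precisely the commutator defect measuring the failure of $b_1$ and $\mathfrak{m}_{n,1,s}^-$ to commute, multiplied by the factors $b_1 t_1^{m-s} g_1^{-1}\cdots g_{k-2}^{-1}$.

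The special case $(b_n t_n^m) b_1 = b_1(b_n t_n^m) + \frac{1}{d}(\U-\U^{-1})\sum_s \mathfrak{p}_{n,s}$ is the instance $k=n$ of claim (ii) after noting $\mathfrak{m}_{n,n,m}^- = t_n^m b_n = b_n t_n^m$ (using $b_n f_n = f_n b_n$ and the commutation (i) of Proposition \ref{braidHBHB}); so it requires no separate argument beyond specializing the general $k>1$ formula. Throughout, the computations are routine applications of (\ref{braid1})--(\ref{quadraticV}) and the identities $b_i^2 = 1+(\V-\V^{-1})f_i b_i$ and $b_i^{-1}=b_i-(\V-\V^{-1})f_i$; the only conceptual step is correctly tracking how the idempotent expansion of $e_k$ or $f_1$ regenerates elements of $M_{d,k}$ under reindexing, which is what forces the summation terms to appear.
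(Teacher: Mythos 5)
Your overall route coincides with the paper's own proof: claim (i) and the $k=1$ case of (ii) are done by direct computation with (\ref{tb1}) and (\ref{quadraticV}); the $k>1$ case of (ii) is done by unwinding $b_k$ to get $g_{n-1}\cdots(g_1b_1g_1^{-1}b_1)g_2^{-1}\cdots g_{k-1}^{-1}t_k^m$, commuting $b_1$ past all generators of index $\geq 2$ via (\ref{braid3}), resolving the core $g_1b_1g_1^{-1}b_1$ by splitting $g_1^{-1}$ with (\ref{inverse}) and invoking (\ref{braid4}), and identifying the $e_1$--expansion of the two leftover terms with $\mathfrak{p}_{k,s}$; the ``in particular'' statement is the specialization $k=n$. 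That is exactly what the paper does.

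There is, however, one computational claim in your sketch that does not survive being carried out: in the $k=1$ case of (ii) you assert that expanding $f_1b_1$ and reindexing ``should produce'' the sum $d^{-1}(\V-\V^{-1})\sum_s\mathfrak{m}_{n,1,s}^+$. It cannot, because every term of that expansion retains the factor $b_1$. Explicitly,
$$
\mathfrak{m}_{n,1,m}^-b_1=g_{n-1}\cdots g_1b_1t_1^mb_1=g_{n-1}\cdots g_1b_1^2t_1^m
=\mathfrak{m}_{n,1,m}^+ +\frac{\V-\V^{-1}}{d}\sum_s g_{n-1}\cdots g_1b_1t_1^{m+s}
=\mathfrak{m}_{n,1,m}^+ +\frac{\V-\V^{-1}}{d}\sum_s\mathfrak{m}_{n,1,s}^-,
$$
so the correction sum carries the superscript $-$, not $+$. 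The printed statement of the lemma appears to contain a typo at this point (compare Lemma \ref{relations1D}(vi), where the analogous correction correctly reads $\sum_s\mathfrak{n}_{n,k,s}^-$), and your blind proof reproduces the typo instead of what your own method actually yields; had you executed the computation you would have caught it. Two smaller points of precision: in claim (i) for $k>1$ the product that must commute with $b_1$ is $g_{n-1}\cdots g_k$ (all indices $\geq 2$), not $g_{k-1}\cdots g_j$; and to land exactly on $\mathfrak{p}_{k,s}$ in the $k>1$ case you also need the bookkeeping identities $g_2^{-1}\cdots g_{k-1}^{-1}t_k^m=t_2^mg_2^{-1}\cdots g_{k-1}^{-1}$ and $\mathfrak{m}_{n,1,s}^-g_j^{-1}=g_{j-1}^{-1}\mathfrak{m}_{n,1,s}^-$ for $j\geq 2$, which the paper uses (implicitly) to convert the $e_1$--expansion into the commutator form $b_1X-Xb_1$ with $X=t_1^{m-s}g_1^{-1}\cdots g_{k-2}^{-1}\mathfrak{m}_{n,1,s}^-$.
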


\begin{proof}
The claim of (i) is straightforward. To prove claim (ii), we note first that $\mathfrak{m}_{n,k,m}^- b_1= g_{n-1}\dots (g_{1}b_1g_1^{-1}b_1)g_2^{-1}\dots g_{k-1}^{-1} t_{k}^{m} $.  Then, splitting $g_1^{-1}$ according to (\ref{inverse}) and invoking (\ref{braid4}) we deduce:
\begin{eqnarray*}
 \mathfrak{m}_{n,k,m}^- b_1
 & = & g_{n-1}\dots g_2 (b_1g_{1}b_1g_1)g_2^{-1}\dots g_{k-1}^{-1} t_{k}^{m} -(\U-\U^{-1})g_{n-1}\dots g_{1}be_1b_1g_2^{-1}\dots g_{k-1}^{-1} t_{k}^{m}\\
 & = & b_1g_{n-1}\dots g_2 (g_{1}b_1g_1)g_2^{-1}\dots g_{k-1}^{-1} t_{k}^{m} -(\U-\U^{-1})g_{n-1}\dots g_{1}be_1g_2^{-1}\dots g_{k-1}^{-1} t_{k}^{m}b_1.
\end{eqnarray*}
By using again (\ref{inverse}), we write the second $g_1$ that appears  inside the  parenthesis above in terms of $g_1^{-1}$. So, we obtain:
\begin{eqnarray*}
\mathfrak{m}_{n,k,m}^- b_1
&=&
 b_1 g_{n-1}\dots g_{1}b_1g_1^{-1}g_2^{-1}\dots g_{k-1}^{-1} t_{k}^{m}+(\U-\U^{-1})b_1 g_{n-1}\dots g_{1}b_1e_1g_2^{-1}\dots g_{k-1}^{-1} t_{k}^{m} - \\
& &
(\U-\U^{-1})g_{n-1}\dots g_{1}b_1e_1g_2^{-1}\dots g_{k-1}^{-1} t_{k}^{m}b_1.
\end{eqnarray*}
Hence
\begin{eqnarray*}
\mathfrak{m}_{n,k,m}^- b_1
& = &
(\U-\U^{-1})\left[b_1 g_{n-1}\dots g_{1}b_1e_1 t_{2}^{m}g_2^{-1}\dots g_{k-1}^{-1} -g_{n-1}\dots g_{1}be_1 t_{2}^{m}g_2^{-1}\dots g_{k-1}^{-1}b_1\right]\\
 &  &
 + b_1\mathfrak{m}_{n,k,m}^-.
\end{eqnarray*}
Now, by using the definition of $e_i$, we obtain:
\begin{eqnarray*}
 b_1 g_{n-1}\dots g_{1}b_1e_1 t_{2}^{m}g_2^{-1}\dots g_{k-1}^{-1}
 & = &
  \sum_s b_1 g_{n-1}\dots g_{1}b_1t_1^s t_{2}^{m-s}g_2^{-1}\dots g_{k-1}^{-1}\\
  & = &
  \sum_s b_1 t_{1}^{m-s}g_1^{-1}\dots g_{k-2}^{-1}\mathfrak{m}_{n,1,s}^-.
\end{eqnarray*}
In the same way we obtain:
\begin{eqnarray*}
g_{n-1}\dots g_{1}b_1t_1^s t_{2}^{m-s}g_2^{-1}\dots g_{k-1}^{-1}b_1
& = &
\sum_s g_{n-1}\dots g_{1}b_1t_1^s t_{2}^{m-s}g_2^{-1}\dots g_{k-1}^{-1}b_1 \\
& = &
t_{1}^{m-s}g_1^{-1}\dots g_{k-2}^{-1}\mathfrak{m}_{n,1,s}^-b_1
\end{eqnarray*}
Therefore,  the proof follows.
\end{proof}

\begin{proposition}\label{basisC}
The set $\mathsf{C}_n$ is a basis  for $\Y$.
\end{proposition}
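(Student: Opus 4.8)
The plan is to mirror the two-step strategy used for $\mathsf{D}_n$: first establish that $\mathsf{C}_n$ spans $\Y$, and then obtain linear independence for free from the dimension count already available through Theorem \ref{basis}. Write $\mathfrak{C}_n$ for the linear span of $\mathsf{C}_n$ inside $\Y$. I would prove that $\mathsf{C}_n$ is a spanning set by induction on $n$, exactly as in Proposition \ref{spanD}. The base case $n=1$ is immediate, since $\mathsf{C}_1 = M_{d,1} = \{t_1^m, t_1^m b_1\}$ clearly spans ${\rm Y}_{d,1}^{\mathtt B}$. For the inductive step, assuming $\Z$ is spanned by $\mathfrak{C}_{n-1}$, I would observe that $1\in \mathfrak{C}_n$ and then show that $\mathfrak{C}_n$ is a right ideal of $\Y$; these two facts together force $\mathfrak{C}_n = \Y$.

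To see that $\mathfrak{C}_n$ is a right ideal, note that every element of $\mathsf{C}_n$ has the form $v\,\mathfrak{m}_n$ with $v\in \mathsf{C}_{n-1}\subseteq \Z$ and $\mathfrak{m}_n\in M_{d,n}$. It therefore suffices to check that multiplying $\mathfrak{m}_n$ on the right by each defining generator $t_j$, $g_j$ or $b_1$ of $\Y$ yields a linear combination of terms $w\,\mathfrak{m}'_n$ with $w\in \Z$ and $\mathfrak{m}'_n\in M_{d,n}$. Granting this, $v\,\mathfrak{m}_n x = \sum (vw)\,\mathfrak{m}'_n$, and since $vw\in \Z = \mathfrak{C}_{n-1}$ by the induction hypothesis (using that $\Z$ is a subalgebra of $\Y$, as in the tower \eqref{tower}), each $vw$ expands in $\mathsf{C}_{n-1}$, producing an element of $\mathfrak{C}_n$. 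The required multiplication rules are precisely the content of Lemmas \ref{relations1C}, \ref{relations2C} and \ref{relations3C}: right multiplication by $t_j$ and by $g_j$ stays within $M_{d,n}$ up to a left factor $t_{j-1}$, $t_j$ or $g_{j-1}$ of index $\leq n-1$ lying in $\Z$, and the same holds for $b_1$ in the cases covered by Lemma \ref{relations3C}(i) and by the $k=1$ case of Lemma \ref{relations3C}(ii).

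The delicate case, and the one I expect to be the main obstacle, is right multiplication by $b_1$ of a negative element $\mathfrak{m}_{n,k,m}^-$ with $k>1$, governed by Lemma \ref{relations3C}(ii). There the correction terms $\mathfrak{p}_{k,s}$ appear, and one must verify that each of the two pieces $b_1 t_1^{m-s}g_1^{-1}\cdots g_{k-2}^{-1}\mathfrak{m}_{n,1,s}^-$ and $t_1^{m-s}g_1^{-1}\cdots g_{k-2}^{-1}(\mathfrak{m}_{n,1,s}^- b_1)$ genuinely factors as (element of $\Z$)$\cdot$(element of $M_{d,n}$). This holds because the prefixes $b_1 t_1^{m-s}g_1^{-1}\cdots g_{k-2}^{-1}$ and $t_1^{m-s}g_1^{-1}\cdots g_{k-2}^{-1}$ involve only generators of index $\leq k-2\leq n-2$, hence lie in $\Z$, while $\mathfrak{m}_{n,1,s}^-\in M_{d,n}$, and $\mathfrak{m}_{n,1,s}^- b_1$ reduces by the $k=1$ case of Lemma \ref{relations3C}(ii) to a linear combination of elements $\mathfrak{m}_{n,1,s'}^+\in M_{d,n}$. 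This closes the right-ideal argument and shows that $\mathsf{C}_n$ spans $\Y$.

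Finally, I would count cardinalities. Since $|M_{d,1}| = 2d$ and $|M_{d,k}| = |M_{d,k-1}| + 2d$, we get $|M_{d,k}| = 2dk$, whence $|\mathsf{C}_n|\leq \prod_{k=1}^n |M_{d,k}| = (2d)^n\, n! = 2^n d^n n!$. By Theorem \ref{basis} this number is exactly the dimension of $\Y$. Since a spanning set of a finite-dimensional space can have no fewer elements than the dimension, it follows that $|\mathsf{C}_n| = 2^n d^n n!$ and that $\mathsf{C}_n$ is in fact a basis; in particular the elements listed are pairwise distinct and linearly independent. This route avoids repeating a direct linear-independence computation of the kind carried out via $\Phi$ in the proof of Theorem \ref{basis}.
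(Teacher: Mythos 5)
Your proposal is correct and follows essentially the same route as the paper's own proof: spanning is established by the same right-ideal induction used for $\mathsf{D}_n$, now powered by Lemmas \ref{relations1C}--\ref{relations3C}, and linear independence then follows from comparing the cardinality $2^nd^nn!$ of $\mathsf{C}_n$ with the dimension of $\Y$ given by Theorem \ref{basis}. Your write-up is in fact more careful than the paper's (which compresses all of this into two sentences, with a typo $2^nd^dn!$), particularly in handling the correction terms $\mathfrak{p}_{k,s}$ of Lemma \ref{relations3C}(ii) and in using an inequality on cardinalities before concluding equality.
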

\begin{proof}
We can prove that $\mathsf{C}_n$ is a spanning set for $\Y$ analogously to the proof of Proposition \ref{spanD}, but by using now Lemmas \ref{relations1C}--\ref{relations3C} instead of Lemmas \ref{relations1D} and \ref{relations2D}. Now, the cardinal of $\mathsf{C}_n$ is $2^nd^dn!$, hence the proof follows.
\end{proof}
We shall close the subsection with a lemma, which will be used
 in Section 6.

\begin{lemma}\label{tr1}
For $k\geq 2$ and $X\in \Y$ the following identities hold:
\begin{enumerate}
  \item[(i)] $\mathfrak{m}_{k,j,m}^-b_k=b_{k-1}\mathfrak{m}_{k,j,m}^-\quad\mbox{for $j\leq k-1$}$
  \item[(ii)]  $g_{n-1}Xg_{n-1}^{-1}=g_{n-1}^{-1}Xg_{n-1}+(\U-\U^{-1})(e_{n-1}Xg_{n-1}-g_{n-1}Xe_{n-1})$
   \item[(iii)] $g_{k-1}^2b_{k-1}g_{k-1}^{-1}=b_{k-1}g_{k-1}-(\U-\U^{-1}) b_{k-1}e_{k-1}+(\U-\U^{-1})e_{k-1}b_k$
  \end{enumerate}
\end{lemma}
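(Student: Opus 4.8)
The plan is to reduce all three identities to a single structural fact, namely that $b_k=g_{k-1}b_{k-1}g_{k-1}^{-1}$, which is immediate from the definition $b_k=g_{k-1}\cdots g_1b_1g_1^{-1}\cdots g_{k-1}^{-1}$, and which I will use in the equivalent forms $g_{k-1}b_{k-1}=b_kg_{k-1}$ and $g_jb_j=b_{j+1}g_j$. Combined with the quadratic relation (\ref{quadraticU}) and the inverse formula (\ref{inverse}) for $g_i$, parts (ii) and (iii) become one-line substitutions, whereas part (i) requires a short induction and is the only place where genuine work is needed.

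For part (ii) I would simply substitute $g_{n-1}^{-1}=g_{n-1}-(\U-\U^{-1})e_{n-1}$ from (\ref{inverse}) into $g_{n-1}Xg_{n-1}^{-1}$ and, separately, into $g_{n-1}^{-1}Xg_{n-1}$. Both expansions share the common term $g_{n-1}Xg_{n-1}$, so subtracting them leaves exactly $(\U-\U^{-1})(e_{n-1}Xg_{n-1}-g_{n-1}Xe_{n-1})$, which is the asserted identity. Note that this holds for arbitrary $X\in\Y$ and uses nothing beyond (\ref{inverse}). For part (iii) I would expand the leading square via $g_{k-1}^2=1+(\U-\U^{-1})e_{k-1}g_{k-1}$ from (\ref{quadraticU}), obtaining $g_{k-1}^2b_{k-1}g_{k-1}^{-1}=b_{k-1}g_{k-1}^{-1}+(\U-\U^{-1})e_{k-1}(g_{k-1}b_{k-1}g_{k-1}^{-1})$. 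The bracketed factor is precisely $b_k$, giving the summand $(\U-\U^{-1})e_{k-1}b_k$, and rewriting the first term with $g_{k-1}^{-1}=g_{k-1}-(\U-\U^{-1})e_{k-1}$ produces $b_{k-1}g_{k-1}-(\U-\U^{-1})b_{k-1}e_{k-1}$; adding the two pieces yields the statement verbatim.

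The main obstacle is part (i), which I would prove by descending induction on $j$ from $k-1$ down to $1$. Since $b_k$ commutes with every $t_j$ by Proposition \ref{braidHBHB}(i), and since for $j\le k-1$ one has $\mathfrak{m}_{k,j,m}^-=g_{k-1}\cdots g_j\,b_j\,t_j^m$, it suffices to establish the framing-free identity $g_{k-1}\cdots g_j\,b_j\,b_k=b_{k-1}\,g_{k-1}\cdots g_j\,b_j$. For the base case $j=k-1$ this reads $g_{k-1}b_{k-1}b_k=b_{k-1}g_{k-1}b_{k-1}$, which is exactly Proposition \ref{braidHBHB}(iv) with $k$ replaced by $k-1$. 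For the inductive step I would write $w_j=g_{k-1}\cdots g_j=w_{j+1}g_j$ and use $g_jb_j=b_{j+1}g_j$ to get $w_jb_jb_k=w_{j+1}b_{j+1}g_jb_k$; then, because $j\le k-2$ in the step, Proposition \ref{braidHBHB}(ii) lets $g_j$ pass through $b_k$, and applying the induction hypothesis for $j+1$ followed once more by $b_{j+1}g_j=g_jb_j$ collapses the expression to $b_{k-1}w_jb_j$.

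The delicate point to monitor in part (i) is the bookkeeping on indices: the commutation $g_jb_k=b_kg_j$ is legitimate exactly for $j\le k-2$, so the boundary value $j=k-1$ must be handled separately through the braid relation \ref{braidHBHB}(iv), while every interior step stays within the range where \ref{braidHBHB}(ii) applies. Once this is arranged, the three identities follow with no further computation, and part (i) in particular is what feeds the trace computations of Section 6.
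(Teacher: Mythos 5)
Your proposal is correct, and for parts (ii) and (iii) it is essentially the paper's own argument: the paper also obtains (ii) from the inverse formula (\ref{inverse}) — though it expands both occurrences of $g_{n-1}^{-1}$ at once and then must re-expand the $g_{n-1}^{-1}Xe_{n-1}$ term to cancel the $(\U-\U^{-1})^2e_{n-1}Xe_{n-1}$ contribution, whereas your device of expanding only one inverse in each of $g_{n-1}Xg_{n-1}^{-1}$ and $g_{n-1}^{-1}Xg_{n-1}$ and subtracting avoids the quadratic cross term altogether — and for (iii) the paper merely says the claim follows ``similarly'' from (\ref{quadraticU}) and (\ref{inverse}), which is exactly the two-line computation you spelled out (the key observation $g_{k-1}b_{k-1}g_{k-1}^{-1}=b_k$ being immediate from the definition of $b_k$). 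Where you genuinely diverge is part (i). The paper proves it by a single long chain of equalities: it unfolds both $b_j$ and $b_k$ into words in $b_1$ and the $g_i$'s, commutes the framing $t_j^m$ and the tail $g_1^{-1}\cdots g_{j-1}^{-1}$ across $b_k$ (implicitly via Proposition \ref{braidHBHB}(i),(ii)), and then applies the four-term relation (\ref{braid4}) inside the resulting word before reassembling $b_{k-1}$. You instead strip the framing with Proposition \ref{braidHBHB}(i) and run a descending induction on $j$, whose base case $j=k-1$ is precisely Proposition \ref{braidHBHB}(iv) at index $k-1$ and whose step uses only the conjugation identity $g_jb_j=b_{j+1}g_j$ together with the commutation of Proposition \ref{braidHBHB}(ii), valid exactly in the range $j\le k-2$ that your induction needs. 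Your route buys reuse of the already-established Proposition \ref{braidHBHB} and transparent index bookkeeping, at the cost of an inductive scaffold; the paper's route is self-contained in one computation but repeats, in expanded form, braid manipulations that Proposition \ref{braidHBHB} had already packaged. Both are valid proofs of the same identity.
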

\begin{proof}
To prove claim (i) we use  Lemmas \ref{relations1C}--\ref{relations3C}. More precisely, we have:
\begin{eqnarray*}
  \mathfrak{m}_{k,j,m}^-b_k &=& g_{k-1}g_{k-2}\dots g_1b_1g_1^{-1}\dots g_{j-1}^{-1}t_j^{m}(g_{k-1}\dots g_1b_1g_1^{-1}\dots g_{k-1}^{-1}) \\
  &=& g_{k-1}\dots g_1b_1g_1^{-1}\dots g_{j-1}^{-1}(g_{k-1}\dots g_1b_1g_1^{-1}\dots g_{k-1}^{-1}) t_j^{m}\\
   &=& g_{k-1}\dots g_1b_1(g_{k-1}\dots g_2g_1b_1g_1^{-1}\dots g_{k-1}^{-1})g_1^{-1}\dots g_{j-1}^{-1}t_j^{m}\\
   &=& g_{k-2}\dots g_1 g_{k-1}\dots g_2g_1b_1 g_1b_1g_1^{-1}\dots g_{k-1}^{-1})g_1^{-1}\dots g_{j-1}^{-1} t_j^{m}\\
   &=& g_{k-2}\dots g_1 g_{k-1}\dots g_2 b_1g_1b_1g_1g_1^{-1}g_2^{-1}\dots g_{k-1}^{-1})g_1^{-1}\dots g_{j-1}^{-1} t_j^{m} \\
   &=&g_{k-2}\dots g_1 b_1g_{k-1}\dots g_1b_1g_2^{-1}\dots g_{k-1}^{-1})g_1^{-1}\dots g_{j-1}^{-1}t_j^{m}   \\
   &=& (g_{k-2}\dots g_1 b_1g_1^{-1}\dots g_{k-2}^{-1})(g_{k-1}g_{k-2}\dots g_1b_1g_1^{-1}\dots g_{j-1}^{-1})  t_j^{m} \\
   &=& b_{k-1}\mathfrak{m}_{k,j,m}^-.
\end{eqnarray*}

For claim (ii) we have by (\ref{inverse}):
\begin{eqnarray*}
  g_{n-1}Xg_{n-1}^{-1} &=& (g_{n-1}^{-1} + (\U -\U^{-1})e_{n-1}) X(g_{n-1}- (\U -\U^{-1})e_{n-1}) \\
   &=&g_{n-1}^{-1}Xg_{n-1}+ (\U -\U^{-1})e_{n-1}Xg_{n-1} -(\U -\U^{-1})g_{n-1}^{-1}Xe_{n-1}\\
   & & -(\U -\U^{-1})^2e_{n-1}Xe_{n-1}.
   \end{eqnarray*}
Then, by expanding the $g_{n-1}^{-1}$ above, the result follows. Finally, we can prove claim (iii) similarly, but by using now (\ref{quadraticU}) and (\ref{inverse}).
\end{proof}

\section{A Markov trace on  $\Y$}

The section is devoted to proving that the tower of algebras (\ref{tower}) associated to the algebras $\Y$ supports a Markov trace (Theorem \ref{trace}). This fact is proved by using the method of relative traces, cf. \cite{aijuMMJ, chpoIMRN}. Probably this method is due to  A. P. Isaev and O. V. Ogievetsky, see for example \cite{isog}. In few words, the method consists in  constructing a certain family of linear maps
${\rm tr}_n: \Y \longrightarrow {\rm Y}_{d,n-1}^{\mathtt{B}}$, called {\it relative traces}, which builds step by step the  desired Markov properties. The  Markov trace on $\Y$ is defined by  ${\rm Tr}_n := {\rm tr}_1\circ \cdots \circ {\rm tr}_n$.

\subsection{}

Let $z$ be an indeterminate and denote by ${\Bbb L}$  the field of rational functions ${\Bbb K}(z) = {\Bbb C}(\U,\V, z)$. We work now on the algebra
${\Bbb L}\otimes_{\Bbb K} \Y$ which, for simplicity, we denote again  by
 $\Y$. Notice that ${\Bbb L}\otimes_{\Bbb K} {\Bbb K}={\Bbb L}$. Consequently,
  ${\rm Y}_{d,0}^{\mathtt{B}}$ is taken as $\Bbb{L}$.

   We set
$x_0:=1$ and  from now on we fix non--zero parameters  $x_1,\dots, x_{d-1}, y_0, \dots ,y_{d-1}$   in $\mathbb{L}$.

\smallbreak
\begin{definition}\label{transitionmaps} \rm
For $n\geq 1$, we define  the linear functions ${\rm tr}_n: \Y \longrightarrow {\rm Y}_{d,n-1}^{\mathtt{B}}$  as follows. For $n=1$,  ${\rm tr}_1 (t_1^{a_1}) = x_{a_1}$ and ${\rm tr}_1 (b_1t_1^{a_1}) = y_{a_1}$. For $n\geq 2$, we define   ${\rm tr}_n$ on the basis $\mathsf{C}_n$ of $\Y$ by:

\begin{equation}\label{tracedef}
 {\rm tr}_n(w \mathfrak{m}_n)
=\left\{\begin{array}{ll}
x_{m}w & \quad \mbox{for}\quad \mathfrak{m}_n=t_{n}^{m} \\
y_{m}w & \quad \mbox{for}\quad \mathfrak{m}_n=b_{n}t_{n}^{m} \\
 z w \mathfrak{m}_{n-1,k,m}^{\pm} & \quad  \mbox{for} \quad \mathfrak{m}_n=\mathfrak{m}_{n,k, m}^{\pm}
\end{array}\right.
\end{equation}
where $w:= \mathfrak{m}_1\cdots \mathfrak{m}_{n-1}\in {\mathsf C}_{n-1}$.   Note that (\ref{tracedef}) also holds for $w\in \Z$, since ${\mathsf C}_{n-1}$ is a basis for $\Z$.
\end{definition}

\begin{lemma}\label{traza1}
For all $X,Z\in \Z$ and $Y\in \Y$, we have:
\begin{itemize}
\item[(i)] ${\rm tr}_n(YZ)={\rm tr}_n(Y)Z$
\item[(ii)] ${\rm tr}_n(XY)=X{\rm tr}_n(Y)$
\item[(iii)] ${\rm tr}_n(XYZ)=X{\rm tr}_n(Y)Z$.
\end{itemize}
\end{lemma}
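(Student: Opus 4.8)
The three identities are just the standard bilinearity of the relative trace ${\rm tr}_n$ with respect to multiplication by elements of the subalgebra $\Z$. Since ${\rm tr}_n$ is defined by its values on the basis $\mathsf{C}_n$, and since each basis element factors as $w\mathfrak{m}_n$ with $w\in \mathsf{C}_{n-1}\subseteq \Z$ and $\mathfrak{m}_n\in M_{d,n}$, the plan is to reduce everything to this normal form and then read off the defining formula (\ref{tracedef}). By linearity of ${\rm tr}_n$ it suffices to prove each identity when $Y$ ranges over the basis $\mathsf{C}_n$ and $X,Z$ over the basis $\mathsf{C}_{n-1}$ of $\Z$; moreover, (iii) follows immediately from (i) and (ii), so the real content is in (i) and (ii).

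\textbf{Proof of (i).} I would take $Y=w\mathfrak{m}_n$ with $w\in \mathsf{C}_{n-1}$ and $\mathfrak{m}_n\in M_{d,n}$, and $Z\in \Z$. The key observation is that the factor $\mathfrak{m}_n$ commutes past $Z$ only trivially, so instead I work directly: $YZ=w\mathfrak{m}_nZ$, and I must re-express $\mathfrak{m}_nZ$. The cleanest route is to note that ${\rm tr}_n$ is characterized by formula (\ref{tracedef}) on all products $w'\mathfrak{m}_n$ with $w'\in \Z$, not merely $w'\in \mathsf{C}_{n-1}$, as remarked after Definition \ref{transitionmaps}. Hence for the three types of $\mathfrak{m}_n$ I compute directly: when $\mathfrak{m}_n=t_n^m$, then ${\rm tr}_n(w\mathfrak{m}_nZ)={\rm tr}_n((wZ')t_n^m)$ after pushing $Z$ through $t_n^m$, but the essential point is that $t_n^m$, $b_nt_n^m$, and the various $\mathfrak{m}_{n,k,m}^{\pm}$ all interact with $Z\in \Z$ in a controlled way, since $Z$ involves only generators $b_1,g_1,\dots,g_{n-2},t_1,\dots,t_{n-1}$. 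I would treat the three cases of (\ref{tracedef}) separately, using that $x_m$, $y_m$, and $z$ are scalars in $\mathbb{L}$ that pull out of the $\Z$-module structure, yielding ${\rm tr}_n(w\mathfrak{m}_n)Z$ in each case.

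\textbf{Proof of (ii) and the main obstacle.} For (ii) with $X\in \Z$, I write $XY=Xw\mathfrak{m}_n$; since $Xw\in \Z$ and $\mathfrak{m}_n\in M_{d,n}$, formula (\ref{tracedef}) applies verbatim to the product $(Xw)\mathfrak{m}_n$, giving ${\rm tr}_n(Xw\mathfrak{m}_n)=X\,{\rm tr}_n(w\mathfrak{m}_n)$ directly, because the scalar ($x_m$, $y_m$, or $z$) and the residual factor depend only on $\mathfrak{m}_n$ and not on the $\Z$-part. Thus (ii) is essentially immediate from the well-definedness of (\ref{tracedef}) on $\Z\cdot M_{d,n}$. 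The genuine difficulty, and the step I expect to consume the most care, is (i): unlike left multiplication, right multiplication by $Z$ does not obviously preserve the normal form $w\mathfrak{m}_n$, since passing $Z$ to the left of $\mathfrak{m}_n$ can generate correction terms. Here I would invoke precisely Lemmas \ref{relations1C}, \ref{relations2C} and \ref{relations3C}, which govern the multiplication of the elements $\mathfrak{m}_{n,k,m}^{\pm}$ by the generators $t_j$, $g_j$ and $b_1$ from the right, and verify that in every case the output is a combination $\sum w'\mathfrak{m}_n'$ on which (\ref{tracedef}) evaluates to the same scalar times a $\Z$-element, so that the scalar factors out consistently and reassembles into ${\rm tr}_n(Y)Z$. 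Checking that all the correction terms in Lemma \ref{relations3C}(ii) for $k>1$ land in the kernel-preserving pattern is the delicate bookkeeping, but it is purely mechanical once the lemmas are in hand.
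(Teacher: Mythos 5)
Your proposal is correct and takes essentially the same route as the paper: reduce by linearity to $Y=w\mathfrak{m}_n\in\mathsf{C}_n$ with $X,Z$ ranging over the defining generators of $\Z$, get (ii) immediately from (\ref{tracedef}) applied to $(Xw)\mathfrak{m}_n$ with $Xw\in\Z$, obtain (iii) by combining (i) and (ii), and prove (i) by the case analysis on $\mathfrak{m}_n$ and $Z$ via Lemmas \ref{relations1C}--\ref{relations3C}. The bookkeeping you defer is exactly what the paper carries out, the only genuinely delicate instance being $\mathfrak{m}_n=b_nt_n^m$ with $Z=b_1$, where the correction term coming from Lemma \ref{relations3C}(ii) is shown to have vanishing ${\rm tr}_n$ because its two families of summands collapse, after applying the trace, to $b_1t_1^{m-s}b_1t_1^s$ and $t_1^{m-s}b_1t_1^sb_1$, which coincide since $t_1$ and $b_1$ commute.
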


\begin{proof}
 For proving claim (i) notice that, due to the linearity  of ${\rm tr}_n$, we can suppose that $ Z$ is a  defining generator of ${\rm Y}_{d,n-1}^{\mathtt{B}}$ and $Y=w \mathfrak{m}_n\in \mathsf{C}_n$, with $w\in \mathsf{C}_{n-1}$. Further, to prove the claim  we shall distinguish the $Y$'s according to the possibilities of $\mathfrak{m}_n$.

\noindent $\bullet$ For $\mathfrak{m}_n=t_n^{m}$, we have
$YZ= wt_n^{m}Z= wZt_n^{m} $, then ${\rm tr}_n(wZt_n^{m}) =x_{m}wZ$ since $wZ\in \Z$. Hence, ${\rm tr}_n(YZ) ={\rm tr}_n(Y)Z $.

\noindent $\bullet$ For $\mathfrak{m}_n=b_nt_n^{m}$, we consider first $Z\in\{t_1,\ldots , t_{n-1}, g_1,\ldots,g_{n-2}\} $. Then using Lemma \ref{relations1C} and \ref{relations2C} we have: $YZ= wb_nt_n^{m}Z =wZb_nt_n^{m}$. Hence,
$$
{\rm tr}_n(YZ) ={\rm tr}_n (wZb_nt_n^{m}) = y_{m}wZ = {\rm tr}_n(Y) Z.
$$
Suppose now $Z=b_1$. By definition, ${\rm tr}_n(wb_nt_n^{m}b_1)=w{\rm tr}_n(b_nt_n^{m}b_1)$. Then by Lemma \ref{relations3C}
$$
  {\rm tr}_n(wb_nt_n^{m}b_1) =  w{\rm tr}_n\left(b_1b_nt_n^{m}+\frac{1}{d}(\U-\U^{-1}) A\right)
$$
where
$$
A:=\sum_s\left(b_1t_1^{m-s}g_1^{-1}\dots g_{n-2}^{-1}\mathfrak{m}_{n,1,s}^--t_1^{m-s}g_1^{-1}\dots g_{n-2}^{-1}(\mathfrak{m}_{n,1,s}^-b_1 )\right)
$$
But, we have
\begin{eqnarray*}
 {\rm tr}_n (A) & = & z \sum_s\left(b_1t_1^{m-s}g_1^{-1}\dots g_{n-2}^{-1}\mathfrak{m}_{n-1,1,s}^--t_1^{m-s}g_1^{-1}\dots g_{n-2}^{-1}(\mathfrak{m}_{n-1,1,s}^-b_1 )\right) \\
   &=& z\sum_s\left(b_1t_1^{m-s}b_1t_1^s-t_1^{m-s}b_1t_1^sb_1\right)=0.
\end{eqnarray*}
Therefore,
$$
{\rm tr}_n(YZ)={\rm tr}_n(wb_nt_n^{m}b_1)= {\rm tr}_n(w b_1b_nt_n^{m}) =y_m wb_1 = {\rm tr}_n(Y)Z.
$$

\noindent $\bullet$ For $\mathfrak{m}_n=\mathfrak{m}_{n,k,m}^{\pm}$, with $k<n$, we have:

\smallbreak
\noindent $\ast$ If $Z=t_j$ with $j\in \{1,\dots, {n-1}\}$, the claim follows directly from  (i) Lemma \ref{relations1C}. For example, for $j>k$, we have
\begin{eqnarray*}
  {\rm tr}_n(w\mathfrak{m}_{n,k,m}^{\pm}t_j) &=&  {\rm tr}_n(wt_{j-1}\mathfrak{m}_{n,k,m}^{\pm}) \\
   &=& zwt_{j-1}\mathfrak{m}_{n-1,k,m}^{\pm}\\
   &=&  zw\mathfrak{m}_{n-1,k,m}^{\pm}t_{j}={\rm tr}_n(Y)Z.
\end{eqnarray*}
We can proceed in similar way  for the other cases for $j$.

\smallbreak
\noindent $\ast$ If $Z=g_j$ with $j\in \{1,\dots, n-2\}$. The claim follows by using the formulas of Lemma \ref{relations2C}. Below, we show only the prove of the case  $j=k$, since that the other cases for $j$ follows easily. We have:
 \begin{eqnarray*}
  {\rm tr}_n(w\mathfrak{m}_{n,k,m}^{\pm}g_j) &=&  {\rm tr}_n\left(w\left[ \mathfrak{m}_{n,k+1,m}^{\pm}+\frac{1}{d}(\U-\U^{-1})\sum_s t_{j}^{m-s}\mathfrak{m}_{n,k,s}^{\pm})\right]\right) \\
   &=&\left(zw\left[ \mathfrak{m}_{n-1,k+1,m}^{\pm}+\frac{1}{d}(\U-\U^{-1})\sum_s t_{j}^{m-s}\mathfrak{m}_{n-1,k,s}^{\pm})\right]\right)\\
   &=&  zw\mathfrak{m}_{n-1,k,m}^{\pm}g_{j}={\rm tr}_n(Y)Z.
\end{eqnarray*}

\smallbreak
\noindent $\ast$ If $Z=b_1$, we deduce the claim directly from  (ii) Lemma \ref{relations1C}.

\smallbreak

To prove (ii), by using the linearity of ${\rm tr}_n$, we can suppose  again that  $X$ stands for the defining generators of ${\rm Y}_{d,n-1}^{\mathtt{B}}$ and $Y=w \mathfrak{m}_n\in \mathsf{C}_n$, with $w\in \mathsf{C}_{n-1}$. Note now that $Xw\in \Z$. Hence  claim (ii) follows directly from the definition of ${\rm tr}_n$.

\smallbreak

Finally, claim (iii) is  a combination of  claims (i) and (ii).
\end{proof}

\begin{lemma}\label{traza 2}
  For every $n\geq 1$ and $X\in \Y$, we have that
  $${\rm tr}_n(Xt_n)={\rm tr}_n(t_n X)$$
\end{lemma}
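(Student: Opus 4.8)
The plan is to reduce to the basis $\mathsf{C}_n$ and exploit that $t_n$ is central in $\Z$. By linearity of ${\rm tr}_n$ it suffices to treat a basis element $X=w\mathfrak{m}_n\in \mathsf{C}_n$, with $w=\mathfrak{m}_1\cdots\mathfrak{m}_{n-1}\in \mathsf{C}_{n-1}\subseteq \Z$ and $\mathfrak{m}_n\in M_{d,n}$. The first thing I would record is that $t_n$ commutes with every defining generator of $\Z$: with $t_1,\dots,t_{n-1}$ by (\ref{modular2}), with $b_1$ by (\ref{tb1}), and with $g_1,\dots,g_{n-2}$ by (\ref{th}), since $s_i(n)=n$ for $i\leq n-2$. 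Hence $t_n$ is central in $\Z$, so $t_nX=w\,t_n\mathfrak{m}_n$, and the problem becomes the comparison of ${\rm tr}_n(w\,\mathfrak{m}_n t_n)$ with ${\rm tr}_n(w\,t_n\mathfrak{m}_n)$.

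Next I would evaluate $\mathfrak{m}_n t_n$ and $t_n\mathfrak{m}_n$ using the multiplication rules of Lemma \ref{relations1C}, splitting according to $\mathfrak{m}_n=\mathfrak{m}_{n,k,m}^{\pm}$. When $k=n$ both products equal $t_n^{m+1}$ (resp.\ $t_n^{m+1}b_n$, using Proposition \ref{braidHBHB}(i) to commute $t_n$ past $b_n$), and the definition (\ref{tracedef}) gives $x_{m+1}w$ (resp.\ $y_{m+1}w$) on both sides. When $k<n$, Lemma \ref{relations1C}(i) yields $\mathfrak{m}_n t_n=t_{n-1}\mathfrak{m}_{n,k,m}^{\pm}$ and Lemma \ref{relations1C}(ii) yields $t_n\mathfrak{m}_n=\mathfrak{m}_{n,k,m+1}^{\pm}$. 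Invoking the note after Definition \ref{transitionmaps} (so that the trace formula applies with the left factor $w\,t_{n-1}\in\Z$), I obtain
\[
{\rm tr}_n(Xt_n)=z\,w\,t_{n-1}\,\mathfrak{m}_{n-1,k,m}^{\pm}
\qquad\text{and}\qquad
{\rm tr}_n(t_nX)=z\,w\,\mathfrak{m}_{n-1,k,m+1}^{\pm},
\]
which coincide because $t_{n-1}\mathfrak{m}_{n-1,k,m}^{\pm}=\mathfrak{m}_{n-1,k,m+1}^{\pm}$ is precisely Lemma \ref{relations1C}(ii) read one level down (legitimate since $k\leq n-1$). The base case $n=1$ is immediate, since on $t_1^m$ and $t_1^mb_1$ both sides reduce to $x_{m+1}$ and $y_{m+1}$ using only $t_1b_1=b_1t_1$.

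I do not anticipate a genuine obstacle: the assertion simply says that the two bookkeeping conventions for absorbing a framing on the top strand, appending $t_n$ on the right and then applying ${\rm tr}_n$ versus absorbing $t_n$ on the left, are compatible. The only step needing care is the case $k<n$, where one must recognise that the leftover factor $t_{n-1}$ can be re-absorbed by the same index-raising rule one level down; the centrality of $t_n$ in $\Z$ is what allows $w$ to be carried along as an inert spectator throughout the computation.
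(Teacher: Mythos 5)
Your proof is correct and follows essentially the same route as the paper's: reduce by linearity to a basis element $X=w\mathfrak{m}_n\in\mathsf{C}_n$, dispose of the cases $\mathfrak{m}_n=t_n^m$ and $\mathfrak{m}_n=b_nt_n^m$ by commutativity of $t_n$, and for $\mathfrak{m}_n=\mathfrak{m}_{n,k,m}^{\pm}$ with $k<n$ use Lemma \ref{relations1C}(i) and (ii) to show both sides equal $zw\,\mathfrak{m}_{n-1,k,m+1}^{\pm}$. Your write-up is in fact somewhat more careful than the paper's, spelling out the centrality of $t_n$ in $\Z$, the base case $n=1$, and the appeal to the note after Definition \ref{transitionmaps}.
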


\begin{proof}
  As we know, from linearity of the trace is enough consider $X$ in $\mathsf{C}_n$, then we have $X=w\mathfrak{m}_n$, with $w\in \mathsf{C}_{n-1}$. Whenever  $\mathfrak{m}_n=b_nt_n^{m}$ or $t_n^{m}$ the result is clear, since $t_n$ commute with $X$. So,  suppose  $\mathfrak{m}_n=\mathfrak{m}_{n,k,m}^{\pm}$. Then, from  Lemma \ref{relations1C}, we obtain
$$
{\rm tr}_n(Xt_n) = {\rm tr}_n(w\mathfrak{m}_{n,k,m}^{\pm}t_n)
= zw t_{n-1}\mathfrak{m}_{n-1,k,m}^{\pm}
=  zw \mathfrak{m}_{n-1,k,m+1}^{\pm}.
$$
On the other hand, we have
$$
      {\rm tr}_n(t_n X) = {\rm tr}_n(wt_n\mathfrak{m}_{n,k,m}^{\pm})
       = {\rm tr}_n(w\mathfrak{m}_{n,k,m+1}^{\pm})
       =  zw \mathfrak{m}_{n-1,k,m+1}^{\pm}.
$$
Thus, the proof of the lemma follows.

\end{proof}
\begin{lemma}\label{exg}
  For $n\geq 2$, $X\in \Z$ and $Y\in \Y$, we have:
  \begin{itemize}
     \item[(i)]${\rm tr}_n(e_{n-1}Xg_{n-1}) = {\rm tr}_n(g_{n-1}Xe_{n-1})$
    \item[(ii)]${\rm tr}_{n-1} {\rm tr}_n(e_{n-1}Y)= {\rm tr}_{n-1}{\rm tr}_n(Y e_{n-1}).$
  \end{itemize}
\end{lemma}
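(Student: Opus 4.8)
The plan is to prove both identities by expressing the relevant elements on the basis $\mathsf{C}_n$ and then invoking the trace properties already established, particularly the relative-trace Lemma \ref{traza1} and the explicit multiplication formulas in Lemmas \ref{relations1C}--\ref{relations3C}. For part (i), since $X\in\Z$ and $e_{n-1}$ lies in $\Z$ as well (it involves only $t_{n-1}$ and $t_n$, but note $e_{n-1}$ genuinely involves $t_n$, so some care is needed here), I would first reduce to the case where $X$ is a basis element $w\mathfrak{m}_{n-1}$ of $\mathsf{C}_{n-1}$ by linearity. The natural route is to compute both $e_{n-1}Xg_{n-1}$ and $g_{n-1}Xe_{n-1}$ directly, push them onto the basis $\mathsf{C}_n$ using Lemma \ref{relations2C}, and apply the definition (\ref{tracedef}) of ${\rm tr}_n$ to each. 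The symmetry of the defining formula of $e_{n-1}=\tfrac1d\sum_m t_{n-1}^m t_n^{-m}$ under interchanging the roles of $t_{n-1}$ and $t_n$ should make the two sides coincide.

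For part (ii), I would reduce via linearity to $Y=w\mathfrak{m}_n\in\mathsf{C}_n$ with $w\in\mathsf{C}_{n-1}$, and then treat the three cases $\mathfrak{m}_n=t_n^m$, $\mathfrak{m}_n=b_nt_n^m$, and $\mathfrak{m}_n=\mathfrak{m}_{n,k,m}^{\pm}$ with $k<n$ separately. The key observation is that $e_{n-1}$ commutes with $w$ (as $w\in\Z$ does not involve $t_n$ or $g_{n-1}$ in a way that conflicts, though one must verify the commutation using relation (\ref{th})), and that $e_{n-1}Y$ and $Ye_{n-1}$ must be rewritten on the basis before applying ${\rm tr}_n$ and then ${\rm tr}_{n-1}$. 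I expect the crux to be the careful bookkeeping of how $e_{n-1}$ interacts with the various forms of $\mathfrak{m}_n$: for $\mathfrak{m}_n=t_n^m$ or $b_nt_n^m$ the framing variable gets shifted, and after applying ${\rm tr}_n$ one lands in $\Z$ where ${\rm tr}_{n-1}$ can be compared; for $\mathfrak{m}_n=\mathfrak{m}_{n,k,m}^{\pm}$ the factor $z$ appears and one must check the shifted framings match on both sides.

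The main obstacle I anticipate is handling the term $\mathfrak{m}_n=b_nt_n^m$ (equivalently the $b_n$-part), since the generator $b_n$ does not commute freely past $e_{n-1}$ and $g_{n-1}$; here the relations from Proposition \ref{braidHBHB}, Lemma \ref{tr1}, and especially the nontrivial identity Lemma \ref{relations3C}(ii) governing $b_n$ will be needed to reorganize the products onto the basis. I would also want to use part (i) of this very lemma as an ingredient in proving part (ii): after expanding $e_{n-1}=\tfrac1d\sum_m T_i$-type sums and pushing factors through, the difference ${\rm tr}_{n-1}{\rm tr}_n(e_{n-1}Y)-{\rm tr}_{n-1}{\rm tr}_n(Ye_{n-1})$ should collapse to a sum of terms of the shape appearing in (i), each vanishing by the symmetry argument. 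The final check is purely a matter of verifying that the framing indices and the powers of $z$ agree term by term, which is routine once the reduction is set up correctly.
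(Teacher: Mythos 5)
Your plan for part (ii) rests on a false statement. What you call ``the key observation'' --- that $e_{n-1}$ commutes with $w\in\Z$ --- does not hold. The idempotent $e_{n-1}=\frac1d\sum_s t_{n-1}^st_n^{-s}$ contains $t_{n-1}$, and by relation (\ref{th}) we have $t_{n-1}g_{n-2}=g_{n-2}t_{n-2}$, so for instance $e_{n-1}g_{n-2}=g_{n-2}\cdot\frac1d\sum_s t_{n-2}^st_n^{-s}\neq g_{n-2}e_{n-1}$; note also that $e_{n-1}\notin\Z$, as you half-observe in part (i). Since your whole case analysis over $\mathfrak{m}_n$ is organized around moving $e_{n-1}$ past $w$, the argument fails at its first step, and the claimed collapse of the difference ${\rm tr}_{n-1}{\rm tr}_n(e_{n-1}Y)-{\rm tr}_{n-1}{\rm tr}_n(Ye_{n-1})$ into terms of the shape (i) is never substantiated. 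Part (i) suffers a milder version of the same problem: ``the symmetry of $e_{n-1}$ should make the two sides coincide'' is a hope, not a mechanism, and Lemma \ref{relations2C} is not the right tool anyway --- it multiplies single basis elements $\mathfrak{m}^{\pm}_{n,k,m}$ by $g_j$, whereas here one must handle $e_{n-1}Xg_{n-1}$ for arbitrary $X\in\Z$.

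The actual proof is much lighter and never touches the basis $\mathsf{C}_n$. For (i): expand $e_{n-1}$, commute $t_n^{-s}$ across $X$ (legitimate, since $t_n$ commutes with all of $\Z$) and across $g_{n-1}$ via (\ref{th}), which converts it into $t_{n-1}^{-s}$; then Lemma \ref{traza1} pulls the $\Z$-factors out of ${\rm tr}_n$, leaving ${\rm tr}_n(g_{n-1})=z$ in the middle. The two sides become $\frac zd\sum_s t_{n-1}^{s}Xt_{n-1}^{-s}$ and $\frac zd\sum_s t_{n-1}^{-s}Xt_{n-1}^{s}$, which are equal under the reindexing $s\mapsto d-s$. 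For (ii) there is a short computation valid for all $Y\in\Y$ with no case distinction: by Lemma \ref{traza1}(ii), ${\rm tr}_{n-1}{\rm tr}_n(e_{n-1}Y)=\frac1d\sum_s{\rm tr}_{n-1}\bigl(t_{n-1}^s\,{\rm tr}_n(t_n^{-s}Y)\bigr)$; swap $t_n^{-s}$ to the other side of $Y$ inside ${\rm tr}_n$ by Lemma \ref{traza 2}; swap $t_{n-1}^s$ to the other side inside ${\rm tr}_{n-1}$ by Lemma \ref{traza 2} applied at level $n-1$; then reabsorb $t_{n-1}^s$ into ${\rm tr}_n$ by Lemma \ref{traza1}(i), giving $\frac1d\sum_s{\rm tr}_{n-1}{\rm tr}_n(Yt_n^{-s}t_{n-1}^s)={\rm tr}_{n-1}{\rm tr}_n(Ye_{n-1})$. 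In particular, part (i) is not an ingredient of part (ii), and none of Proposition \ref{braidHBHB}, Lemma \ref{tr1}, or Lemma \ref{relations3C} is needed.
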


\begin{proof}
We prove (i). Expanding the left side and using Lemma \ref{traza1}, we have:
$$
  {\rm tr}_n(e_{n-1}Xg_{n-1}) =  \frac{1}{d}\sum_s {\rm tr}_n(t_{n-1}^s t_{n}^{-s}Xg_{n-1}) =   \frac{1}{d}\sum_s {\rm tr}_n(t_{n-1}^s Xg_{n-1}t_{n-1}^{-s})
     = \frac{1}{d}\sum_s zt_{n-1}^s Xt_{n-1}^{-s}.
$$
Similarly, we expand  the right side obtaining:
$$
{\rm tr}_n(g_{n-1}Xe_{n-1}) = \frac{1}{d}\sum_s zt_{n-1}^{-s} Xt_{n-1}^{s}.
$$
Hence, claim (i) is true.

\smallbreak

To prove claim (ii) we use Lemmas \ref{traza1} and \ref{traza 2}.  Indeed, we have:
\begin{eqnarray*}
 {\rm tr}_{n-1} ({\rm tr}_n(e_{n-1}Y)) &=&  \frac{1}{d}\sum_s{\rm tr}_{n-1}({\rm tr}_n(t_{n-1}^s t_n^{-s}Y))\\
   &=& \frac{1}{d}\sum_s{\rm tr}_{n-1}(t_{n-1}^s {\rm tr}_n(t_n^{-s}Y)) \\
   &=& \frac{1}{d}\sum_s{\rm tr}_{n-1}(t_{n-1}^s {\rm tr}_n(Y t_n^{-s}))  \\
   &=&  \frac{1}{d}\sum_s{\rm tr}_{n-1}({\rm tr}_n(Y t_n^{-s})t_{n-1}^s)  \\
   &=& \frac{1}{d}\sum_s{\rm tr}_{n-1}({\rm tr}_n(Y t_n^{-s}t_{n-1}^s)) \\
   &=& {\rm tr}_{n-1} ({\rm tr}_n(Y e_{n-1})).
\end{eqnarray*}
\end{proof}

\begin{lemma}\label{traza3}
  For $n\geq 2$ and $X\in \Z$. We have
  $$
  {\rm tr}_n(g_{n-1}Xg_{n-1}^{-1}) = {\rm tr}_{n-1}(X)={\rm tr}_n(g_{n-1}^{-1}Xg_{n-1}).
  $$
\end{lemma}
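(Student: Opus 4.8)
The plan is to prove the two equalities in turn, starting with the outer one ${\rm tr}_n(g_{n-1}Xg_{n-1}^{-1})={\rm tr}_n(g_{n-1}^{-1}Xg_{n-1})$. I would apply ${\rm tr}_n$ to the conjugation identity of Lemma~\ref{tr1}(ii), which expresses $g_{n-1}Xg_{n-1}^{-1}-g_{n-1}^{-1}Xg_{n-1}$ as $(\U-\U^{-1})(e_{n-1}Xg_{n-1}-g_{n-1}Xe_{n-1})$; by Lemma~\ref{exg}(i) the image of this difference under ${\rm tr}_n$ vanishes. This disposes of the second equality, and it remains only to show ${\rm tr}_n(g_{n-1}Xg_{n-1}^{-1})={\rm tr}_{n-1}(X)$.

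For this remaining equality I would argue by linearity and reduce $X$ to a basis element of $\mathsf{C}_{n-1}$ (Proposition~\ref{basisC}), say $X=X'\mathfrak{m}_{n-1}$ with $X'\in\mathsf{C}_{n-2}\subseteq\W$ and $\mathfrak{m}_{n-1}\in M_{d,n-1}$. Every defining generator of $\W$ involves only the first $n-2$ strands and therefore commutes with $g_{n-1}$ (for $n=2$ one has $\W=\mathbb{L}$, so this is automatic); hence $g_{n-1}Xg_{n-1}^{-1}=X'(g_{n-1}\mathfrak{m}_{n-1}g_{n-1}^{-1})$. Factoring $X'$ out of both sides with Lemma~\ref{traza1}(ii) and the definition of ${\rm tr}_{n-1}$, the statement reduces to the single identity ${\rm tr}_n(g_{n-1}\mathfrak{m}_{n-1}g_{n-1}^{-1})={\rm tr}_{n-1}(\mathfrak{m}_{n-1})$, which I would check according to the form of $\mathfrak{m}_{n-1}$.

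The two \emph{short} cases are direct. If $\mathfrak{m}_{n-1}=t_{n-1}^{m}$, then (\ref{th}) gives $g_{n-1}t_{n-1}^{m}g_{n-1}^{-1}=t_n^{m}$, so both sides equal $x_m$. If $\mathfrak{m}_{n-1}=t_{n-1}^{m}b_{n-1}=b_{n-1}t_{n-1}^{m}$ (Proposition~\ref{braidHBHB}(i)), then using $b_n=g_{n-1}b_{n-1}g_{n-1}^{-1}$ together with (\ref{th}) I obtain $g_{n-1}\mathfrak{m}_{n-1}g_{n-1}^{-1}=b_nt_n^{m}$, so both sides equal $y_m$; both values come straight from the definition of the relative traces.

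The hard part will be the case $\mathfrak{m}_{n-1}=\mathfrak{m}_{n-1,k,m}^{\pm}$ with $k<n-1$, where $\mathfrak{m}_{n-1}=g_{n-2}\nu$ for some $\nu\in\W$. Since $\nu$ commutes with $g_{n-1}$, the braid relation (\ref{braid2}) and (\ref{inverse}) yield $g_{n-1}\mathfrak{m}_{n-1}g_{n-1}^{-1}=\big(g_{n-2}g_{n-1}g_{n-2}-(\U-\U^{-1})g_{n-1}g_{n-2}e_{n-1}\big)\nu$. Applying ${\rm tr}_n$ to the first summand via Lemma~\ref{traza1} and ${\rm tr}_n(g_{n-1})=z$ gives $zg_{n-2}^{2}\nu$, which by (\ref{quadraticU}) equals $z\nu+z(\U-\U^{-1})e_{n-2}g_{n-2}\nu$. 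The crux is the commutation identity $g_{n-1}g_{n-2}e_{n-1}=e_{n-2}g_{n-1}g_{n-2}$, obtained by transporting the framings of $e_{n-1}$ first across $g_{n-2}$ and then across $g_{n-1}$ by means of (\ref{th}); granting it, and noting that $e_{n-2}\in\Z$ while $g_{n-1}g_{n-2}\nu=\mathfrak{m}_{n,k,m}^{\pm}$ has ${\rm tr}_n$-image $z\mathfrak{m}_{n-1,k,m}^{\pm}=z g_{n-2}\nu$, the second summand contributes exactly $-z(\U-\U^{-1})e_{n-2}g_{n-2}\nu$. The two contributions cancel and leave $z\nu={\rm tr}_{n-1}(\mathfrak{m}_{n-1})$, as desired. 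I expect the bookkeeping behind the identity $g_{n-1}g_{n-2}e_{n-1}=e_{n-2}g_{n-1}g_{n-2}$, and its correct interaction with the factoring rules of the relative trace, to be the genuine obstacle in this argument.
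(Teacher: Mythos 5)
Your proposal is correct, and its skeleton coincides with the paper's: both reduce $X$ by linearity to a basis element $w\mathfrak{m}_{n-1}$ of $\mathsf{C}_{n-1}$, dispose of the cases $\mathfrak{m}_{n-1}=t_{n-1}^{m}$ and $\mathfrak{m}_{n-1}=b_{n-1}t_{n-1}^{m}$ by the same direct computations, and obtain ${\rm tr}_n(g_{n-1}Xg_{n-1}^{-1})={\rm tr}_n(g_{n-1}^{-1}Xg_{n-1})$ exactly as you do, from Lemma~\ref{tr1}(ii) combined with Lemma~\ref{exg}(i) (the paper merely does this last rather than first). The one genuine divergence is the case $\mathfrak{m}_{n-1}=\mathfrak{m}_{n-1,k,m}^{\pm}$, which you flagged as the obstacle. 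The paper handles it in two short moves: $g_{n-1}w\mathfrak{m}_{n-1,k,m}^{\pm}g_{n-1}^{-1}=w\,\mathfrak{m}_{n,k,m}^{\pm}g_{n-1}^{-1}$, and then the commutation $\mathfrak{m}_{n,k,m}^{\pm}g_{n-1}=g_{n-2}\mathfrak{m}_{n,k,m}^{\pm}$ (Lemma~\ref{relations2C}, case $j>k$), read with inverses, gives $\mathfrak{m}_{n,k,m}^{\pm}g_{n-1}^{-1}=g_{n-2}^{-1}\mathfrak{m}_{n,k,m}^{\pm}$; applying ${\rm tr}_n$ then lands immediately on $zw\,g_{n-2}^{-1}\mathfrak{m}_{n-1,k,m}^{\pm}=zw\,\mathfrak{m}_{n-2,k,m}^{\pm}={\rm tr}_{n-1}(X)$, with no cancellation at all. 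Your route instead expands $g_{n-1}^{-1}$ via (\ref{inverse}), invokes the braid relation (\ref{braid2}), the identity $g_{n-1}g_{n-2}e_{n-1}=e_{n-2}g_{n-1}g_{n-2}$ (which is indeed correct, following from (\ref{th}) as you indicate), and the quadratic relation (\ref{quadraticU}), and then verifies that the two $(\U-\U^{-1})$-terms cancel. Both arguments are valid and rest on the same trace-factoring rules of Lemma~\ref{traza1}; what the paper's observation buys is that the \lq\lq hard\rq\rq\ case becomes as short as the easy ones, whereas your cancellation argument, while self-contained, carries precisely the bookkeeping burden you anticipated.
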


\begin{proof}
As before, we can suppose $X= w \mathfrak{m}_{n-1}$ with $w\in \mathsf{C}_{n-2}$. We will check the first equality by distinguishing the possibilities for $\mathfrak{m}_{n-1}$. For $\mathfrak{m}_{n-1}=t_{n-1}^{m}$ the claim is  only a direct computation.  For $\mathfrak{m}_{n-1}=b_{n-1}t_{n-1}^{m}$, we have
  $$
  {\rm tr}_n(g_{n-1}wb_{n-1}t_{n-1}^{m}g_{n-1}^{-1})={\rm tr}_n(wb_{n}t_{n}^{m})=y_{m}w={\rm tr}_{n-1}(X).
  $$
Finally, for $\mathfrak{m}_{n-1}=\mathfrak{m}_{n-1,k,m}^{\pm}$, we have:
  \begin{eqnarray*}
     {\rm tr}_n(g_{n-1}w\mathfrak{m}_{n-1,k,m}^{\pm}g_{n-1}^{-1}) &=& {\rm tr}_n(w\mathfrak{m}_{n,k,m}^{\pm}g_{n-1}^{-1}) \\
     &=& {\rm tr}_n(w g_{n-2}^{-1}\mathfrak{m}_{n,k,m}^{\pm})  \\
     &=& zw g_{n-2}^{-1}\mathfrak{m}_{n-1,k,m}^{\pm}  \\
     &=& zw \mathfrak{m}_{n-2,k,m}^{\pm} \\
     &=&{\rm tr}_{n-1}(X).
  \end{eqnarray*}
Thus,  the proof of the first equality is done.
\smallbreak
From a combination  of   Lemma \ref{tr1} (ii) and  Lemma \ref{exg} (i), we deduce
\begin{equation*}
{\rm tr}_n(g_{n-1}Xg_{n-1}^{-1}) = {\rm tr}_n(g_{n-1}^{-1}Xg_{n-1}).
\end{equation*}
 Hence, from (i) the second equality follows.
\end{proof}

\begin{lemma}\label{trazaimp}
For all $X\in \Y$, we have
$$
{\rm tr}_{n-1}({\rm tr}_n(Xg_{n-1}))={\rm tr}_{n-1}({\rm tr}_n(g_{n-1}X)).
$$
\end{lemma}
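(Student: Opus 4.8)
The plan is to prove the identity on a convenient spanning set and to organize the work according to the position of the loop generator in the last strand. Since $\mathsf{C}_n = \mathsf{C}_{n-1}\cdot M_{d,n}$ and $\mathsf{C}_{n-1}$ is a basis of $\Z$, the algebra $\Y$ is linearly spanned by the products $pq$ with $p\in\Z$ and $q\in M_{d,n}$. Writing $\tau := {\rm tr}_{n-1}\circ {\rm tr}_n$ and using the linearity of both relative traces, it therefore suffices to establish $\tau(pq\,g_{n-1}) = \tau(g_{n-1}\,pq)$ for every $p\in\Z$ and every $q\in M_{d,n}$. I would then split into the three possible shapes of $q$, namely $q = t_n^m$, $\ q = g_{n-1}x$ with $x\in M_{d,n-1}\subseteq\Z$, and $q = t_n^m b_n$.

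For $q = t_n^m$ the computation is short. Using relation (\ref{th}) in the forms $t_n g_{n-1} = g_{n-1}t_{n-1}$ and $g_{n-1}t_n = t_{n-1}g_{n-1}$, together with the value ${\rm tr}_n(g_{n-1}) = z$, Lemma \ref{traza1} reduces the two sides to $z\,{\rm tr}_{n-1}(p\,t_{n-1}^m)$ and $z\,{\rm tr}_{n-1}(t_{n-1}^m\,p)$ respectively; these agree by iterating the level $n-1$ case of Lemma \ref{traza 2}. For $q = g_{n-1}x$ I would expand the trailing $g_{n-1}$ by (\ref{inverse}) as $g_{n-1} = g_{n-1}^{-1} + (\U-\U^{-1})e_{n-1}$. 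The contributions containing $g_{n-1}^{-1}$ are handled by Lemma \ref{traza3} (conjugation by $g_{n-1}$ lowers the trace level), and on both sides they produce the common term ${\rm tr}_{n-1}(p)\,{\rm tr}_{n-1}(x)$. The two remaining $e_{n-1}$-corrections become, after moving $p\in\Z$ inside via Lemma \ref{traza1}, the expressions ${\rm tr}_{n-1}{\rm tr}_n(p\,g_{n-1}x\,e_{n-1})$ and ${\rm tr}_{n-1}{\rm tr}_n(g_{n-1}p\,e_{n-1}x)$; applying Lemma \ref{exg}(ii) to transfer $e_{n-1}$ from the right to the left, and then Lemma \ref{exg}(i) to interchange $e_{n-1}$ and $g_{n-1}$ around the central factor $p$, shows that these coincide. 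Thus the case $q = g_{n-1}x$ closes.

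The essential case is $q = t_n^m b_n$, where the loop generator enters, and this is where I expect the main obstacle. On one side the computation is still clean: since $t_n$ commutes with $p$ and with $b_n$ (Proposition \ref{braidHBHB}(i)) and $b_n = g_{n-1}b_{n-1}g_{n-1}^{-1}$, one gets $pq\,g_{n-1} = p\,g_{n-1}\,b_{n-1}t_{n-1}^m$, whence $\tau(pq\,g_{n-1}) = z\,{\rm tr}_{n-1}(p\,b_{n-1}t_{n-1}^m)$ by Lemma \ref{traza1}. The difficulty is the other side, which requires computing ${\rm tr}_n(g_{n-1}\,p\,b_n)$: rewriting $b_n$ and expanding the inner $g_{n-1}^{\pm1}$ by means of (\ref{inverse}) together with the type-$\mathtt B$ braid relation (\ref{braid4}) and Proposition \ref{braidHBHB} produces, besides the expected term reproducing $z\,p\,b_{n-1}t_{n-1}^m$, several correction terms carrying the idempotents $f_1$ and $e_{n-1}$. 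The crux is to show that all of these corrections die under the double trace: the $f_1$-type correction is of exactly the same shape as the element $A$ that appears in the proof of Lemma \ref{traza1}, whose ${\rm tr}_n$ was computed to be zero, while the $e_{n-1}$-type corrections are matched in pairs by Lemma \ref{exg}. Once each correction is shown to vanish, both sides reduce to $z\,{\rm tr}_{n-1}(p\,b_{n-1}t_{n-1}^m)$, which settles this last case and therefore the lemma.
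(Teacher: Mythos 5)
Your reduction to elements $pq$ with $p\in\Z$ and $q\in M_{d,n}$, and your first two cases, are sound. For $q=t_n^m$ you argue just as the paper does (Lemma \ref{traza1} plus Lemma \ref{traza 2}), and for $q=g_{n-1}x$ your scheme --- expand the trailing $g_{n-1}$ via (\ref{inverse}), reduce the $g_{n-1}^{-1}$ parts to ${\rm tr}_{n-1}(p)\,{\rm tr}_{n-1}(x)$ by Lemma \ref{traza3}, and match the $e_{n-1}$ parts using Lemma \ref{exg}(ii), Lemma \ref{traza1}(i) and Lemma \ref{exg}(i) --- is correct, and arguably cleaner than the paper's own treatment of that case.

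The gap is exactly where you anticipate ``the main obstacle'', in the case $q=t_n^mb_n$, and the mechanism you propose there would fail. It is not true that the correction terms ``die under the double trace'', and Lemma \ref{exg} cannot pair them. Take $p=w't_{n-1}^{\beta}$ with $w'\in\mathsf{C}_{n-2}$ (the paper's first sub-case). Expanding ${\rm tr}_{n-1}({\rm tr}_n(g_{n-1}pq))$ produces, besides the main term, the two corrections $B={\rm tr}_{n-1}({\rm tr}_n(w'b_{n-1}e_{n-1}t_{n-1}^{\beta}t_n^{m}))=\frac{1}{d}\sum_s x_{m-s}\,y_{\beta+s}\,w'$ and $C={\rm tr}_{n-1}({\rm tr}_n(w'e_{n-1}b_{n}t_{n-1}^{\beta}t_n^{m}))=\frac{1}{d}\sum_s y_{m-s}\,x_{\beta+s}\,w'$, entering with opposite signs. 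Neither vanishes: at this stage $x_1,\dots,x_{d-1},y_0,\dots,y_{d-1}$ are arbitrary nonzero parameters (the ${\rm E}$-- and ${\rm F}$--conditions are only imposed in Section 6). The two sides of the lemma agree only because $B=C$, an identity of convolution sums obtained by the reindexing $s\mapsto m-\beta-s$ after evaluating both double traces all the way down to the parameters. This is out of reach of Lemma \ref{exg}: that lemma exchanges $e_{n-1}$ with $g_{n-1}$--words, whereas the paired corrections here differ by $b_{n-1}$ versus $b_n$, for which the paper has no formal exchange rule --- the identity must be computed. (Also, no $f_1$--corrections arise in this lemma at all; the element $A$ with ${\rm tr}_n(A)=0$ belongs to the proof of Lemma \ref{traza1}, where one multiplies by $b_1$, not by $g_{n-1}$, so that trick is unavailable here.) Closing this case requires what the paper actually does: split further according to the last factor of $p$, namely $t_{n-1}^{\beta}$, $b_{n-1}t_{n-1}^{\beta}$, $\mathfrak{m}_{n-1,j,\beta}^{-}$ or $\mathfrak{m}_{n-1,j,\beta}^{+}$. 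In the two middle sub-cases Proposition \ref{braidHBHB}(iv) and Lemma \ref{tr1}(i) give the equality with no corrections at all; in the other two, corrections of the above type appear on one or both sides and must be evaluated explicitly and matched via the convolution symmetry.
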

\begin{proof}
Again, from the linearity of ${\rm tr}_n$ it is enough to consider $X$  in the basis $\mathsf{C}_n$. Set  $X=w \mathfrak{m}_n$, with $w\in \mathsf{C}_{n-1}$. We are going to prove  the statement by distinguishing  according to the possibilities for $\mathfrak{m}_n$.

\smallbreak

\noindent $\bullet$ For $\mathfrak{m}_n=t_n^{m}$, the claim  follows from Lemma \ref{traza 2}.

\smallbreak

\noindent $\bullet$ For $\mathfrak{m}_n=\mathfrak{m}_{n,k,m}^{\pm}$, we note that, using the formula (\ref{inverse}) for the inverse of $g_{n-1}$ and  Lemma \ref{exg} (ii), we obtain the following:
$$
{\rm tr}_{n-1}({\rm tr}_n(Xg_{n-1}^{-1}))={\rm tr}_{n-1}({\rm tr}_n(Xg_{n-1})).
$$
Now, for the left side of this  equality, we have:
\begin{eqnarray*}
  {\rm tr}_{n-1}({\rm tr}_n(Xg_{n-1}^{-1})) &=&  {\rm tr}_{n-1}({\rm tr}_n(w\mathfrak{m}_{n,k,m}^{\pm}g_{n-1}^{-1}))  \\
   &=&  {\rm tr}_{n-1}({\rm tr}_n(wg_{n-1}\mathfrak{m}_{n-1,k,m}^{\pm}g_{n-1}^{-1}))  \\
   &=&   {\rm tr}_{n-1}(w{\rm tr}_n(g_{n-1}\mathfrak{m}_{n-1,k,m}^{\pm}g_{n-1}^{-1})) \\
   &=&  {\rm tr}_{n-1}(w{\rm tr}_{n-1}(\mathfrak{m}_{n-1,k,m}^{\pm})) \\
   &=& {\rm tr}_{n-1}(w){\rm tr}_{n-1}(\mathfrak{m}_{n-1,k,m}^{\pm}).
\end{eqnarray*}
In the same manner one obtains this last expression for ${\rm tr}_{n-1}({\rm tr}_n(Xg_{n-1}^{-1}))$. In consequence the  claim holds.

\noindent $\bullet$ Finally, for $\mathfrak{m}_{n}=b_{n}t_n^{m}$ we separate the proof depending on the form of $w$ in $X=w\mathfrak{m}_{n}$.

\smallbreak
\noindent $\ast$ Suppose that $w=w^{\prime}t_{n-1}^{\beta}$, with $w^{\prime}\in \mathsf{C}_{n-2}$. Then,   we have:
\begin{eqnarray*}
  {\rm tr}_{n-1}({\rm tr}_n(Xg_{n-1}))&=&  {\rm tr}_{n-1}({\rm tr}_n(w^{\prime}t_{n-1}^{\beta} b_{n}t_n^{m} g_{n-1}))\\
     &=& {\rm tr}_{n-1}({\rm tr}_n(w^{\prime}t_{n-1}^{\beta} b_{n} g_{n-1}t_{n-1}^{m}))  \\
   &=&{\rm tr}_{n-1}({\rm tr}_n(w^{\prime}t_{n-1}^{\beta}g_{n-1} b_{n-1}t_{n-1}^{m}))   \\
   &=& z {\rm tr}_{n-1}(w^{\prime}t_{n-1}^{\beta} b_{n-1}t_{n-1}^{m}) \\
   &=& zx_{m+\beta}w^{\prime}.
   \end{eqnarray*}
On other hand:
   \begin{eqnarray*}
  {\rm tr}_{n-1}({\rm tr}_n(g_{n-1}X))
  &=&
  {\rm tr}_{n-1}({\rm tr}_n(g_{n-1}w^{\prime}t_{n-1}^{\beta} b_{n}t_n^{m} )) \\
  &=&
  {\rm tr}_{n-1}({\rm tr}_n(w^{\prime}g_{n-1} b_{n}t_{n-1}^{\beta}t_n^{m} ))  \\
  &=&
  {\rm tr}_{n-1}({\rm tr}_n(w^{\prime}g_{n-1}^2 b_{n-1}g_{n-1}^{-1}t_{n-1}^{\beta}t_n^{m} ))
  \end{eqnarray*}
 Then using Lemma \ref{tr1} (iii), we obtain
$$
 {\rm tr}_{n-1}({\rm tr}_n(g_{n-1}X)) = A - (\U-\U^{-1})B + (\U-\U^{-1}) C
$$
where
$$
   \begin{array}{ccl}
   A & := & {\rm tr}_{n-1}({\rm tr}_n(w^{\prime} b_{n-1}g_{n-1}t_{n-1}^{\beta}t_n^{m}))\\
   B & := & {\rm tr}_{n-1}({\rm tr}_n(w^{\prime}b_{n-1}e_{n-1}t_{n-1}^{\beta}t_n^{m}))\\
   C & := & {\rm tr}_{n-1}({\rm tr}_n(w^{\prime}e_{n-1}b_nt_{n-1}^{\beta}t_n^{m})).
   \end{array}
   $$
We will compute the values of $A$, $B$ and $C$. A direct computation shows that:
$$ A= {\rm tr}_{n-1}({\rm tr}_n(w^{\prime} b_{n-1}t_{n-1}^{m}g_{n-1}t_{n-1}^{\beta}))=
z{\rm tr}_{n-1}({\rm tr}_n(w^{\prime} b_{n-1}t_{n-1}^{m +\beta})) = zx_{m+\beta}w^{\prime}.
$$
Expanding $e_{n-1}$ in $B$, we get:
\begin{eqnarray*}
 B
&=&
\frac{1}{d}(\U-\U^{-1})\sum_{s}{\rm tr}_{n-1}({\rm tr}_n(w^{\prime} b_{n-1}t_{n-1}^st_{n}^{-s}t_{n-1}^{\beta}t_n^{m})) \\
      &=&\frac{1}{d}\sum_{s}x_{m -s}{\rm tr}_{n-1}(w^{\prime}b_{n-1}t_{n-1}^{\beta+s} )
 \end{eqnarray*}
 Then
 $$
 B =\frac{1}{d}\sum_{s}x_{m -s}y_{\beta+s}w^{\prime}
$$
By expanding also $e_{n-1}$ in $C$, we have:
$$
C = \frac{1}{d}\sum_s{\rm tr}_{n-1}({\rm tr}_n(w^{\prime}t_{n-1}^st_n^{-s}b_{n}t_{n-1}^{\beta}t_n^{m}))
= \frac{1}{d}\sum_s y_{m-s}x_{\beta+s}w^{\prime}
= \frac{1}{d}\sum_{r}y_{\beta+r}x_{m-r}w^{\prime}.
$$
(notice that the last equality is obtain by making $s=-r+ m - \beta$). Thus $B=C$, this imply
$
 {\rm tr}_{n-1}({\rm tr}_n(Xg_{n-1})) = A = {\rm tr}_{n-1}({\rm tr}_n(g_{n-1}X)).
$

\smallbreak
\noindent $\ast$ Suppose $\mathfrak{m}_{n-1}=w^{\prime}b_{n-1}t_{n-1}^{\beta}$, with $w^{\prime}\in \mathsf{C}_{n-2}$. We have:
$$
Xg_{n-1} = w^{\prime}b_{n-1}t_{n-1}^{\beta}b_{n}t_n^{m}g_{n-1}
  = w^{\prime} t_{n-1}^{\beta}b_{n-1}b_{n}g_{n-1}t_{n-1}^{m}
 = w^{\prime}t_{n-1}^{\beta}b_{n-1}g_{n-1}b_{n-1}t_{n-1}^{m}
$$
Then
$$
{\rm tr}_{n-1}({\rm tr}_n(Xg_{n-1})) = z{\rm tr}_{n-1}(w' b_{n-1}^2t_{n-1}^{m+\beta})
$$
On the  other hand:
\begin{eqnarray*}
g_{n-1}X
& = &  g_{n-1}w^{\prime} b_{n-1}t_{n-1}^{\beta}b_{n}t_n^{m}
 = w^{\prime} g_{n-1} b_{n-1}b_{n}t_{n-1}^{\beta}t_n^{m}\\
 & = & w^{\prime} t_{n-1}^{m} g_{n-1} b_{n-1}b_{n}t_{n-1}^{\beta}
= w^{\prime} t_{n-1}^{m} b_{n-1}g_{n-1}b_{n-1}t_{n-1}^{\beta}
\end{eqnarray*}
(in the last equality we have  used (iv) Proposition \ref{braidHBHB}). Hence
$$
 {\rm tr}_{n-1}({\rm tr}_n(g_{n-1}X))
= z {\rm tr}_{n-1}(w'  b_{n-1}^2t_{n-1}^{m + \beta}) =  {\rm tr}_{n-1}({\rm tr}_n(Xg_{n-1})).
$$

\smallbreak

\noindent $\ast$ Suppose  $\mathfrak{m}_{n-1}=w^{\prime}\mathfrak{m}_{n-1,j,\beta}^-$.  We have:
$$
X g_{n-1} =  w^{\prime}\mathfrak{m}_{n-1,j, \beta}^- b_{n}t_n^{m}g_{n-1}
   = w^{\prime}\mathfrak{m}_{n-1,j,\beta}^- b_{n}g_{n-1}t_{n-1}^{m}
   = w^{\prime}\mathfrak{m}_{n-1,j,\beta}^- g_{n-1}b_{n-1}t_{n-1}^{m}.
$$
Then
\begin{eqnarray*}
 {\rm tr}_{n-1}({\rm tr}_n(X g_{n-1}))
   &=&z {\rm tr}_{n-1}(w^{\prime}\mathfrak{m}_{n-1,j,\beta}^- b_{n-1}t_{n-1}^{m}) \\
   &=& z  {\rm tr}_{n-1}(w^{\prime} b_{n-2}t_{n-2}^{m}\mathfrak{m}_{n-1,j,\beta}^-) \\
   &=& z^2  w^{\prime} b_{n-2}t_{n-2}^{m}\mathfrak{m}_{n-2,j,\beta}^-
\end{eqnarray*}
On the other hand, we note that
$$
g_{n-1}X = g_{n-1}w^{\prime}\mathfrak{m}_{n-1,j,\beta}^- b_{n}t_n^{m}
  = w^{\prime} g_{n-1}\mathfrak{m}_{n-1,j, \beta}^- b_{n}t_n^{m}
  = w^{\prime} \mathfrak{m}_{n,j,\beta}^- b_{n}t_n^{m}
  = w^{\prime} b_{n-1}t_{n-1}^{m}\mathfrak{m}_{n,j,\beta}^-.
$$
Then
\begin{eqnarray*}
  {\rm tr}_{n-1}({\rm tr}_n( g_{n-1}X))
   &=& z {\rm tr}_{n-1}(w^{\prime} b_{n-1}t_{n-1}^{m}\mathfrak{m}_{n-1,j,\beta}^-)  \\
   &=& z {\rm tr}_{n-1}(w^{\prime} b_{n-1}g_{n-2}t_{n-2}^{m}\mathfrak{m}_{n-2,j,\beta}^-) \\
   &=& z {\rm tr}_{n-1}(w^{\prime} g_{n-2}b_{n-2}t_{n-2}^{m}\mathfrak{m}_{n-2,j,\beta}^-)  \\
   &=& z^2 w^{\prime} b_{n-2}t_{n-2}^{m}\mathfrak{m}_{n-2,j,\beta}^-.
\end{eqnarray*}
Hence, $ {\rm tr}_{n-1}({\rm tr}_n(X g_{n-1})) ={\rm tr}_{n-1}({\rm tr}_n( g_{n-1}X))$.

\smallbreak

\noindent $\ast$ Finally, let us suppose that  $\mathfrak{m}_{n-1}=\mathfrak{m}_{n-1,j,\beta}^+$. We have
$$
X g_{n-1} =  w^{\prime}\mathfrak{m}_{n-1,j,\beta}^+b_nt_n^{m} g_{n-1}
 = w^{\prime}\mathfrak{m}_{n-1,j,\beta}^+g_{n-1}b_{n-1}t_{n-1}^{m}
$$
Then
\begin{eqnarray*}
   {\rm tr}_{n-1}({\rm tr}_n(X g_{n-1}))
   &=& z{\rm tr}_{n-1}(w'\mathfrak{m}_{n-1,j,\beta}^+b_{n-1}t_{n-1}^{m}) \\
   &=& z {\rm tr}_{n-1}(w'g_{n-2}\mathfrak{m}_{n-2,j,\beta}^+b_{n-1}t_{n-1}^{m})  \\
   &=& z w^{\prime}{\rm tr}_{n-1}(g_{n-2}b_{n-1}t_{n-1}^{m})\mathfrak{m}_{n-2,j,\beta}^+ \\
   &=& z w^{\prime}{\rm tr}_{n-1}(g_{n-2}^2b_{n-2}g_{n-2}^{-1}t_{n-1}^{m})\mathfrak{m}_{n-2,j,\beta}^+
    \end{eqnarray*}
We shall compute now ${\rm tr}_{n-1}(g_{n-2}^2b_{n-2}g_{n-2}^{-1}t_{n-1}^{m})$. To do that, we note that  splitting  the square and recalling the definition of $b_{n-1}$, we can write
 $$
 {\rm tr}_{n-1}(g_{n-2}^2b_{n-2}g_{n-2}^{-1}t_{n-1}^{m})= A-(\U-\U^{-1})B+(\U-\U^{-1})C
 $$
  where
  $$
\begin{array}{ccl}
A & := & {\rm tr}_{n-1}(b_{n-2}g_{n-2}t_{n-1}^{m})=zb_{n-2}t_{n-2}^{m}\\
 B & := &  {\rm tr}_{n-1}(b_{n-2}e_{n-2}t_{n-1}^{m})=\sum_s x_{m-s}b_{n-2}t_{n-2}^s\\
 C & := & {\rm tr}_{n-1}(e_{n-2}b_{n-1}t_{n-1}^{m})=\sum_s y_{m-s}t_{n-2}^s.
\end{array}
$$
Hence
\begin{equation}\label{caseproof1}
{\rm tr}_{n-1}({\rm tr}_n( g_{n-1}X))=z^2w^{\prime}b_{n-2}t_{n-2}^{m}\mathfrak{m}_{n-2,j,\beta}^++z(\U-\U^{-1})w^{\prime}\left( C-B\right)\mathfrak{m}_{n-2,j,\beta}^+
\end{equation}

On the other side, we have
$$
g_{n-1}X =  g_{n-1}w^{\prime}\mathfrak{m}_{n-1,j,\beta}^+b_nt_n^{m}
  =  w^{\prime}g_{n-1}\mathfrak{m}_{n-1,j,\beta}^+b_nt_n^{m}
  =  w^{\prime}g_{n-1}b_n\mathfrak{m}_{n-1,j,\beta}^+t_n^{m}
$$
Then
\begin{eqnarray*}
   {\rm tr}_{n-1}({\rm tr}_n(g_{n-1}X))
   &=& w^{\prime}{\rm tr}_{n-1}({\rm tr}_n(g_{n-1}b_n t_n^{m})\mathfrak{m}_{n-1,j,\beta}^+) \\
   &=&  w^{\prime}{\rm tr}_{n-1}({\rm tr}_n(g_{n-1}^2b_{n-1}g_{n-1}^{-1} t_n^{m})\mathfrak{m}_{n-1,j,\beta}^+)
    \end{eqnarray*}
As before, we split the square and then we deduce the following
\begin{equation}\label{caseproof2}
{\rm tr}_{n-1}({\rm tr}_n(g_{n-1}X)) = A_1 + (\U-\U^{-1})(C_1 -B_1)
\end{equation}
where
$$
\begin{array}{ccl}
A_1 & := & {\rm tr}_{n-1}({\rm tr}_n(b_{n-1}g_{n-1} t_n^{m})\mathfrak{m}_{n-1,j,\beta}^+)
  =  z^2w'b_{n-2}t_{n-2}^{m}\mathfrak{m}_{n-2,j,\beta}^+\\
 B_1 & := & {\rm tr}_{n-1}({\rm tr}_n(b_{n-1}e_{n-1}t_{n}^{m})\mathfrak{m}_{n-1,j,\beta}^+)=z\left[\sum_s x_{m-s}b_{n-2}t_{n-2}^s\right]\mathfrak{m}_{n-2,j,\beta}^+ \\
C_1 & := & {\rm tr}_{n-1}({\rm tr}_n(e_{n-1}b_n t_n^{m})\mathfrak{m}_{n-1,j,\beta}^+)=z\left[\sum_s y_{m-s}t_{n-2}^s\right]\mathfrak{m}_{n-2,j,\beta}^+.
 \end{array}
 $$
Hence, comparing
(\ref{caseproof1}) and (\ref{caseproof2}), the claim follows. Therefore the lemma is proved.
\end{proof}

\subsection{}

 In this subsection we prove that the family  $\{\Y \}_{n\geq 1}$ supports a Markov trace.
Let  ${\rm Tr}_n$ be  the linear map, from $\Y$ to $\Bbb{L}$, defined inductively by setting: ${\rm Tr}_1 = {\rm tr}_1$  and
$$
{\rm Tr}_n = {\rm Tr}_{n-1}\circ {\rm tr}_n \quad \text{for} \quad n\geq 2.
$$
The definition of ${\rm Tr}_n$ says that ${\rm Tr}_n(1)=1$ and that
\begin{equation}\label{Trnk}
{\rm Tr}_n (x) = {\rm Tr}_k (x)\quad \text{for} \quad x\in {\rm Y}_{d, k}^B \quad \text{and} \quad n\geq k.
\end{equation}

Let us denote ${\rm Tr}$ the family $\{{\rm Tr}_n\}_{n\geq 1}$. The following theorem is one of our main results.
\begin{theorem}\label{trace}
  ${\rm Tr}$ is a Markov trace on $\{\Y \}_{n\geq 1}$. That is, for every $n\geq 1$  the linear map ${\rm Tr}_n: \Y \longrightarrow \Bbb{L} $ satisfies  the following rules:
   \begin{enumerate}
    \item[(i)] ${\rm Tr}_n(1)=1$
    \item[(ii)] ${\rm Tr}_{n+1}(Xg_n)=z{\rm Tr}_n(X)$
    \item[(iii)] ${\rm Tr}_{n+1}(Xb_{n+1}t_{n+1}^{m})=y_{m}{\rm Tr}_n(X)$
    \item[(iv)] ${\rm Tr}_{n+1}(Xt_{n+1}^{m})=x_{m}{\rm Tr}_n(X)$
    \item[(v)] ${\rm Tr}_n(XY)={\rm Tr}_n(YX)$
 \end{enumerate}
  where $X,Y\in \Y$.
\end{theorem}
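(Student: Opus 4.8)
The plan is to derive rules (i)--(iv) as immediate consequences of the definition of the relative traces and of the conditional-expectation properties in Lemma~\ref{traza1}, and to spend the real effort on the trace property (v), which I would prove by induction on $n$.

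For (i) note that $1=t_n^0$ is the basis element $w\mathfrak{m}_n$ with $w=1$, $\mathfrak{m}_n=t_n^0$, so ${\rm tr}_n(1)=x_0=1$ and hence ${\rm Tr}_n(1)={\rm Tr}_{n-1}(1)=\cdots=1$. For (ii)--(iv) I would pass one level up: an element $X\in\Y$ also lies in the subalgebra ${\rm Y}_{d,n}^{\mathtt B}\subseteq{\rm Y}_{d,n+1}^{\mathtt B}$, so the $(n+1)$-version of Lemma~\ref{traza1}(ii) gives ${\rm tr}_{n+1}(XU)=X\,{\rm tr}_{n+1}(U)$ for any $U\in{\rm Y}_{d,n+1}^{\mathtt B}$. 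Reading off from the definition that ${\rm tr}_{n+1}(g_n)=z$, that ${\rm tr}_{n+1}(b_{n+1}t_{n+1}^m)=y_m$ and that ${\rm tr}_{n+1}(t_{n+1}^m)=x_m$, and then applying ${\rm Tr}_n$, yields rules (ii), (iii) and (iv) respectively.

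For rule (v) I would first reduce to generators: a routine cyclic-peeling argument shows that if ${\rm Tr}_n(Xg)={\rm Tr}_n(gX)$ holds for all $X\in\Y$ and every defining generator $g$, then ${\rm Tr}_n(XY)={\rm Tr}_n(YX)$ for all $X,Y$ (write $Y$ as a word in the generators and move one generator around using the generator identity and the shorter-word case). The base case $n=1$ is trivial because ${\rm Y}_{d,1}^{\mathtt B}$ is commutative, being generated by the commuting elements $t_1,b_1$. For the inductive step I split the generators into three families. For $g=t_n$, applying ${\rm Tr}_{n-1}$ to Lemma~\ref{traza 2} gives ${\rm Tr}_n(Xt_n)={\rm Tr}_n(t_nX)$. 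For $g=g_{n-1}$, since ${\rm Tr}_n={\rm Tr}_{n-2}\circ{\rm tr}_{n-1}\circ{\rm tr}_n$, applying ${\rm Tr}_{n-2}$ to Lemma~\ref{trazaimp} gives ${\rm Tr}_n(Xg_{n-1})={\rm Tr}_n(g_{n-1}X)$. For a generator $g\in\{b_1,g_1,\dots,g_{n-2},t_1,\dots,t_{n-1}\}\subseteq\Z$, Lemma~\ref{traza1}(i)--(ii) gives
$$
{\rm Tr}_n(Xg)={\rm Tr}_{n-1}\bigl({\rm tr}_n(X)\,g\bigr),\qquad {\rm Tr}_n(gX)={\rm Tr}_{n-1}\bigl(g\,{\rm tr}_n(X)\bigr),
$$
and because ${\rm tr}_n(X),g\in\Z$ the two right-hand sides coincide by the induction hypothesis that ${\rm Tr}_{n-1}$ is a trace on $\Z$.

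I expect the main obstacle to sit entirely inside the case $g=g_{n-1}$, that is, in Lemma~\ref{trazaimp}: this is the only generator whose interchange genuinely entangles the top strand with the braiding, and its verification requires the delicate case analysis on the last factor $\mathfrak{m}_n$ of a basis element (and on the shape of its prefix $w$), together with the auxiliary identities of Lemma~\ref{tr1} and Lemma~\ref{exg} and the quadratic relation for $b_n$. Once those lemmas are available, the proof of the theorem itself is essentially the inductive bookkeeping described above, so I would keep (i)--(iv) short and devote the body of the proof to organizing the three branches of (v).
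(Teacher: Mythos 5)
Your proposal is correct and follows essentially the same route as the paper's proof: rules (i)--(iv) read off from the definition of ${\rm tr}_{n+1}$ together with Lemma~\ref{traza1}(ii), and rule (v) by induction on $n$ with the three branches handled exactly as in the paper (Lemma~\ref{traza1} plus the inductive hypothesis for generators lying in $\Z$, Lemma~\ref{traza 2} for $t_n$, and Lemma~\ref{trazaimp} for $g_{n-1}$). Your only deviation is to make explicit the cyclic-peeling reduction to generators, which the paper compresses into the phrase ``having in mind the linearity of ${\rm Tr}_n$''; this is a welcome clarification rather than a different argument.
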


\begin{proof}
Rules (ii)--(iv) are direct  consequences of  Lemma \ref{traza1} (ii). Indeed, for example  for (ii), we have:
$$
{\rm Tr}_{n+1}(Xg_n) = {\rm Tr}_n ( {\rm tr}_{n+1}(Xg_n)) =  {\rm Tr}_n ( X{\rm tr}_{n+1}(g_n))
= {\rm Tr}_n ( Xz)=z{\rm Tr}_n ( X).
$$
  We prove rule (v) by induction on $n$. For $n=1$, the rule  holds since ${\rm Y}_{d,1}^{\mathtt B}$ is commutative. Suppose now that (v) is true for all $k$ less than $n$. We prove it first  for  $Y\in {\rm Y}_{d, n-1}^B$ and $X\in \Y$. We have
  \begin{eqnarray*}
      {\rm Tr}_n(XY) &=& {\rm Tr}_{n-1}({\rm tr}_n(XY)) \\
     &=&{\rm Tr}_{n-1}({\rm tr}_n(X)Y)\quad \mbox{ (by (i) Lemma \ref{traza1})}  \\
     &=& {\rm Tr}_{n-1}(Y{\rm tr}_n(X))\quad \mbox{ (by induction hypothesis)}\\
     &=& {\rm Tr}_{n-1}({\rm tr}_n(YX))\quad \mbox{ (by (ii) Lemma \ref{traza1})}.
  \end{eqnarray*}
 Hence, $ {\rm Tr}_n(XY) =  {\rm Tr}_n(YX)$ for all $X\in \Y$ and $Y\in \Z$.
 Now, we prove the rule for   $Y\in \{g_{n-1}, t_n\}$. By using Lemma \ref{trazaimp}, we get
$$
{\rm Tr}_n(XY) = {\rm Tr}_{n-2}({\rm tr}_{n-1}({\rm tr}_n(XY)))
= {\rm Tr}_{n-2}({\rm tr}_{n-1}({\rm tr}_n(YX)))
$$
Summarizing, we have
$$
{\rm Tr}_n(XY) = {\rm Tr}_n(XY)
$$
for all $X\in \Y$ and $Y\in \Z \cup \{g_{n-1}, t_n\}$. Clearly, having in mind the linearity of ${\rm Tr}_n$, this last equality implies that rule (v) holds.
\end{proof}

The rules of the trace on the topological level are illustrated in the next figure.
\begin{figure}[h]
\begin{center}
  \includegraphics{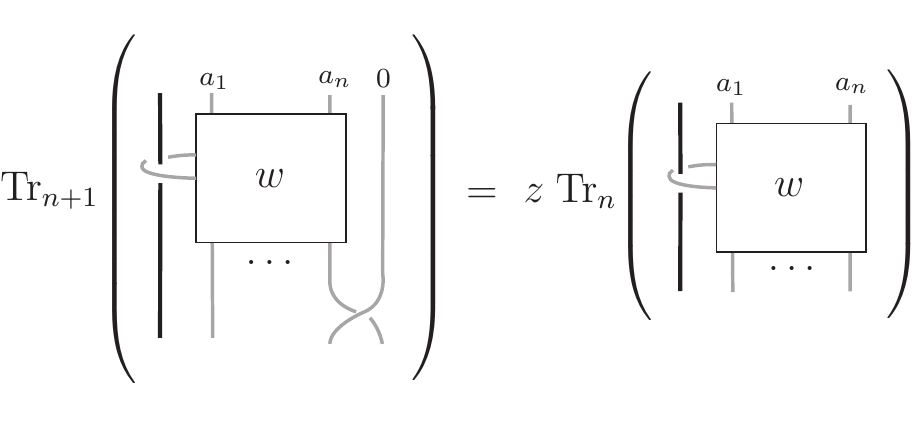}
		\end{center}
 \end{figure}
 \newpage
 \begin{figure}[h]
\begin{center}
  \includegraphics{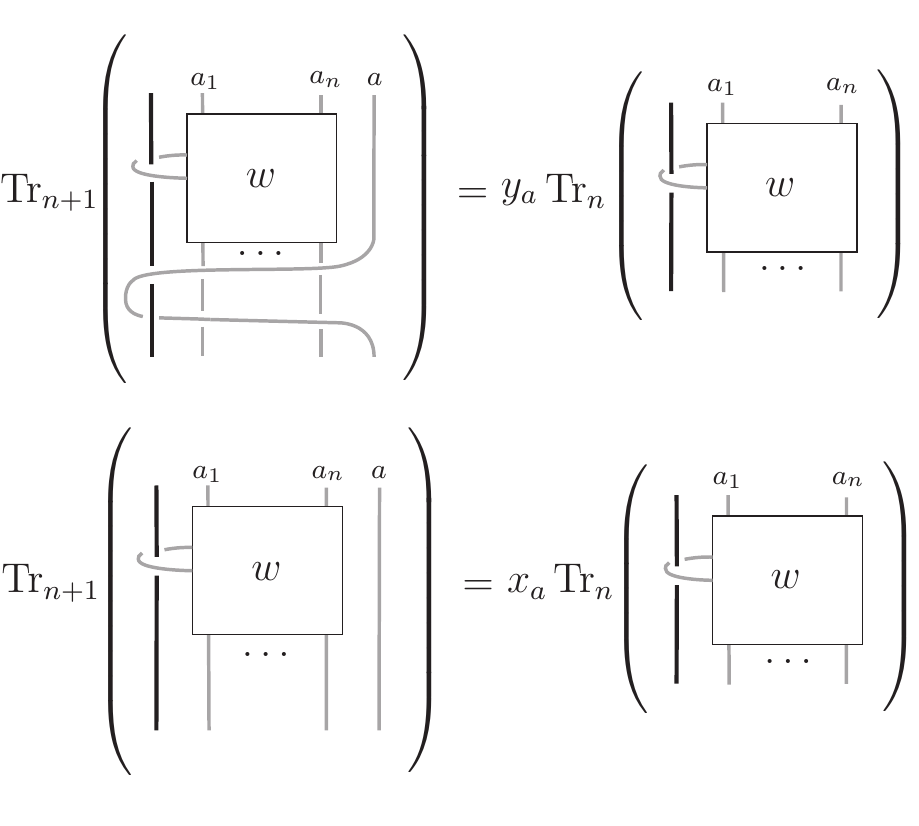}
	\caption{Trace rules (ii)-(iv).}\label{tracerule2}
	\end{center}
 \end{figure}
\section{The ${\rm E}$--condition and the ${\rm F}$--condition}

In this section we establish the necessary and sufficient  conditions by which the parameters trace $x_1, \dots, x_{d-1}, y_0, \dots, y_{d-1}\in \Bbb{L}$ satisfies the following equation.
 \begin{equation*}
 {\rm Tr}_{n+1}(we_n)={\rm Tr}_n(w){\rm Tr}_{n+1}(e_n)\qquad \mbox{for all $w\in \Y$}.
\end{equation*}
This equation plays a key role for defining knot and link invariants in the next section. In this section we will prove  that if the parameters satisfy the so--called  ${\rm E}$--conditon, and a set of new conditions,  called ${\rm F}$--condition, then the above equation holds; see Theorem \ref{factortr}. Finally, we will compute  such trace parameters, by using the method due to P. G\'erardin to solve the so--called ${\rm E}$--system, see \cite[Appendix]{jula2}.

\subsection{}

In \cite{jula2} certain elements $E^{(k)}$ were introduced, associated to the trace parameters  of the trace on the Yokonuma--Hecke algebra. With these $E^{(k)}$ the authors defined a non-linear system of equations called the {\it E--system}. We say that  the solutions of this ${\rm E}$--system satisfy the ${\rm E}$--{\it condition}. Notably, whenever the trace parameters  of the Markov trace on the Yokonuma--Hecke algebra satisfy the ${\rm E}$--condition we have an invariant for framed and classical knots and links.

\smallbreak

 We consider here the same formal expressions of elements $E^{(k)}$ associated now to the trace parameters $x_1,\ldots , x_{d-1}$ of ${\rm Tr}_n$. More precisely, we define
\begin{equation}\label{E^k}
 E^{(k)}:= \frac{1}{d}\sum_m x_{k+m}x_{d-m} \qquad \text{for}\qquad 0\leq k\leq d-1.
\end{equation}
Note that $E^{(0)} = {\rm Tr}_n(e_n)$. Also we need to introduce the following elements
\begin{equation}\label{F^k}
  F^{(k)}:=\frac{1}{d}\sum_m x_{d-m}y_{k+m} \qquad \text{for}\qquad 0\leq k\leq d-1.
\end{equation}

In the summations above the $m$'s are regarded modulo $d$.

\smallbreak
The ${\rm E}$--system is defined as the non--linear system of equations in $x_1,\dots, x_{d-1}$ formed by the following  $d-1$   equations:
\begin{eqnarray*}
  E^{(m)} &=& x_mE^{(0)}
\end{eqnarray*}
where $1\leq m\leq d-1$.  Any solution $({\rm x}_1,\dots, {\rm x}_n)$ of the ${\rm E}$--system is referred to by saying that it satisfies the ${\rm E}$--condition.

\smallbreak
Assume that $({\rm x}_1,\dots, {\rm x}_n)$ satisfies the ${\rm E}$--condition. The ${\rm F}$--system is the homogeneous linear system of equations in $y_0,\dots, y_{d-1}$, formed by the following $d$ equations:
\begin{eqnarray*}
  {\rm F}^{(m)} &=& y_m{\rm E}^{(0)}
\end{eqnarray*}
where $0\leq m\leq d-1$, and ${\rm E}^{(0)}$ and ${\rm F}^{(m)}$ are the elements that result from replacing $x_i$ by ${\rm x}_i$ in (\ref{E^k}) and (\ref{F^k}) respectively, that is:
$$ {\rm F}^{(m)}:=\frac{1}{d}\sum_m {\rm x}_{d-m}y_{k+m} \quad \text{and} \quad  {\rm E}^{(0)}:= \frac{1}{d}\sum_m {\rm x}_{m}{\rm x}_{d-m}.$$
Also we have that ${\rm E}^{(0)}=\frac{1}{|S|}$, see \cite[Section 4.3]{jula5}. Thus the ${\rm F}$--system is formed by the following equations
\begin{equation}\label{Fsystem}
  \sum_m {\rm x}_{d-m}y_{k+m}-\frac{d}{|S|}y_m=0 \qquad 0\leq m \leq d-1.
\end{equation}

Notice that the matrix associated to this linear system is given by:
$$\left(
    \begin{array}{cccc}
      {\rm x}_0-\frac{d}{|S|} & {\rm x}_{d-1}  & \dots  & {\rm x}_1 \\
      {\rm x}_1 & {\rm x}_0-\frac{d}{|S|} & \ddots  & {\rm x}_2   \\
      \vdots & \ddots &  \ddots &  {\rm x}_{d-1}\\
       {\rm x}_{d-1}& \dots &  {\rm x}_1& {\rm x}_0-\frac{d}{|S|}  \\
    \end{array}
  \right).
$$

Any solution $({\rm y}_0,\dots, {\rm y}_n)$ of the ${\rm F}$--system is referred to saying that it satisfies the ${\rm F}$--condition.
\smallbreak
We have the following theorem.

\begin{theorem}\label{factortr}
 We assume that the trace parameters are specialize to complex numbers $({\rm x}_1,\dots, {\rm x}_n)$ and  $({\rm y}_0,\dots, {\rm y}_n)$ that satisfy the ${\rm E}$--condition and the ${\rm F}$--condition respectively. Then
 \begin{equation}\label{trazah}
         {\rm Tr}_{n+1}(we_n)={\rm Tr}_n(w){\rm Tr}_{n+1}(e_n)\qquad \text{ for all} \qquad w\in \Y.
       \end{equation}
\end{theorem}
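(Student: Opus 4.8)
The plan is to collapse the factorization to a single identity about inserting one framing, and then prove that identity by induction on $n$. First I would use ${\rm Tr}_{n+1}={\rm Tr}_n\circ{\rm tr}_{n+1}$ together with the first clause of Definition \ref{transitionmaps}: since $e_n=\frac1d\sum_m t_n^mt_{n+1}^{d-m}$ and $wt_n^m$ carries no $t_{n+1}$, one gets ${\rm tr}_{n+1}(we_n)=\frac1d\sum_m x_{d-m}\,wt_n^m$ and hence
$$
{\rm Tr}_{n+1}(we_n)=\frac1d\sum_m x_{d-m}\,{\rm Tr}_n(wt_n^m).
$$
Taking $w=1$ and applying rule (iv) of Theorem \ref{trace} gives ${\rm Tr}_{n+1}(e_n)=\frac1d\sum_m x_{d-m}x_m=E^{(0)}$. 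So the theorem is equivalent to
$$
\frac1d\sum_m x_{d-m}\,{\rm Tr}_n(Xt_n^m)=E^{(0)}\,{\rm Tr}_n(X)\qquad(\star_n),
$$
for all $X\in\Y$, and it is $(\star_n)$ that I would prove.

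I would establish $(\star_n)$ by induction on $n$, reducing by linearity to $X=w'\mathfrak{m}_n\in\mathsf{C}_n$ with $w'\in\mathsf{C}_{n-1}$, and splitting on the shape of $\mathfrak{m}_n$. When $\mathfrak{m}_n=t_n^b$, rule (iv) turns the left-hand side of $(\star_n)$ into $\big(\frac1d\sum_m x_{d-m}x_{b+m}\big){\rm Tr}_{n-1}(w')=E^{(b)}{\rm Tr}_{n-1}(w')$, and the ${\rm E}$--condition $E^{(b)}=x_bE^{(0)}$ matches it with $E^{(0)}{\rm Tr}_n(X)$. When $\mathfrak{m}_n=b_nt_n^b$, the same computation with rule (iii) yields $F^{(b)}{\rm Tr}_{n-1}(w')$, and the ${\rm F}$--condition $F^{(b)}=y_bE^{(0)}$ finishes it; taken with $w'=1$ these two subcases also supply the base case $n=1$.

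The hard part will be the subcase $\mathfrak{m}_n=\mathfrak{m}_{n,k,b}^{\pm}$ with $k<n$, where $\mathfrak{m}_n$ opens with $g_{n-1}$ and the relative trace does not simply record a framing weight. Here I would first slide the inserted framing using Lemma \ref{relations1C}(i), $\mathfrak{m}_{n,k,b}^{\pm}t_n^m=t_{n-1}^m\mathfrak{m}_{n,k,b}^{\pm}$, so that $Xt_n^m=w't_{n-1}^m\mathfrak{m}_{n,k,b}^{\pm}$ with $w't_{n-1}^m\in\Z$; the third clause of Definition \ref{transitionmaps} then peels off a factor $z$ and drops the index, giving ${\rm Tr}_n(Xt_n^m)=z\,{\rm Tr}_{n-1}(w't_{n-1}^m\mathfrak{m}_{n-1,k,b}^{\pm})$. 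Next I would invoke cyclicity (rule (v)) to move $\mathfrak{m}_{n-1,k,b}^{\pm}\in\Z$ to the front, turning the left-hand side of $(\star_n)$ into $\frac{z}{d}\sum_m x_{d-m}\,{\rm Tr}_{n-1}(X't_{n-1}^m)$ with $X'=\mathfrak{m}_{n-1,k,b}^{\pm}w'\in\Z$; this is $z$ times the left-hand side of $(\star_{n-1})$ for $X'$. By the induction hypothesis it equals $zE^{(0)}{\rm Tr}_{n-1}(X')$, and undoing the cyclic move together with ${\rm Tr}_n(X)=z\,{\rm Tr}_{n-1}(w'\mathfrak{m}_{n-1,k,b}^{\pm})$ (again the third clause of Definition \ref{transitionmaps}) identifies this with $E^{(0)}{\rm Tr}_n(X)$, closing the induction. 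The ${\rm E}$-- and ${\rm F}$--conditions thus enter only through the two framing subcases, while the whole role of the $g_{n-1}$--subcase is to fold $(\star_n)$ back onto $(\star_{n-1})$.
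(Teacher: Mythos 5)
Your proof is correct and follows essentially the same route as the paper: an induction on $n$ over the basis $\mathsf{C}_n$ with the same three-way case split, the ${\rm E}$-- and ${\rm F}$--conditions entering exactly through the $t_n^b$ and $b_nt_n^b$ cases (the paper's Lemmas \ref{h1} and \ref{h2}), and the same slide--peel--cyclicity mechanism reducing the $g_{n-1}$--headed case to level $n-1$ (the paper's Lemma \ref{h3}). Your reformulation $(\star_n)$ is just the paper's argument with $e_n$ expanded once at the outset instead of repackaged into $e_{n-1}$ inside the inductive step, so the two proofs are the same in substance.
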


We shall prove this theorem at the end of the subsection and using  the Lemmas \ref{h1}--\ref{h3} below. We will introduce first the elements
$$e_n^{(m)}:=\frac{1}{d}\sum_{s=0}^{d-1}t_n^{m+s}t_{n+1}^{d-s}$$.

\begin{lemma}\label{h1}
  Let $w=w^{\prime}t_n^k$, where $w^{\prime}\in \Z$. Then
$$
{\rm Tr}_{n+1}(we_n^{(m)})=\frac{E^{(k+m)}}{x_k}{\rm Tr}_n(w).
$$
Hence,  ${\rm Tr}_{n+1}(we_n)=\frac{E^{(k)}}{x_k}{\rm Tr}_n(w)$.
\end{lemma}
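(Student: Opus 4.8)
The plan is to expand $we_n^{(m)}$ entirely in framing generators and then reduce to the trace parameters by applying the relative trace ${\rm tr}_{n+1}$ termwise followed by the Markov rules of Theorem~\ref{trace}. First I would substitute $w=w^{\prime}t_n^k$ and the definition of $e_n^{(m)}$, merging the powers of $t_n$:
$$
we_n^{(m)}=\frac{1}{d}\sum_{s=0}^{d-1}w^{\prime}t_n^{k+m+s}t_{n+1}^{d-s}.
$$
This collection is legitimate because $t_n$ commutes with every generator occurring in $w^{\prime}\in\Z$: relation (\ref{modular2}) handles the $t_j$'s, (\ref{tb1}) handles $b_1$, and (\ref{th}) handles each $g_i$ with $i\leq n-2$, since then $s_i(n)=n$. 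Each summand now has the precise split form $(w^{\prime}t_n^{k+m+s})\cdot t_{n+1}^{\,d-s}$ with $w^{\prime}t_n^{k+m+s}\in\Y$, so applying the $t_{n+1}$--case of (\ref{tracedef}) termwise gives ${\rm tr}_{n+1}(we_n^{(m)})=\tfrac{1}{d}\sum_{s}x_{d-s}\,w^{\prime}t_n^{k+m+s}$.

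Next I would apply ${\rm Tr}_n={\rm Tr}_{n-1}\circ{\rm tr}_n$, together with rule (iv) of Theorem~\ref{trace} at level $n$ (that is, ${\rm Tr}_n(w^{\prime}t_n^{a})=x_{a}{\rm Tr}_{n-1}(w^{\prime})$), to obtain
$$
{\rm Tr}_{n+1}(we_n^{(m)})=\left(\frac{1}{d}\sum_{s=0}^{d-1}x_{k+m+s}\,x_{d-s}\right){\rm Tr}_{n-1}(w^{\prime}).
$$
Comparing the parenthesised coefficient with the definition (\ref{E^k}) (after relabelling the summation variable) shows that it is exactly $E^{(k+m)}$. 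Finally, since $w=w^{\prime}t_n^k$, the same rule (iv) gives ${\rm Tr}_n(w)=x_k{\rm Tr}_{n-1}(w^{\prime})$; as the trace parameters are non--zero I may invert this as ${\rm Tr}_{n-1}(w^{\prime})=x_k^{-1}{\rm Tr}_n(w)$, which yields ${\rm Tr}_{n+1}(we_n^{(m)})=\frac{E^{(k+m)}}{x_k}{\rm Tr}_n(w)$. The second assertion is then the specialisation $m=0$, using $e_n^{(0)}=e_n$ and $E^{(k+0)}=E^{(k)}$.

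There is no genuine obstacle here; the argument is essentially bookkeeping with the relative traces and their defining rules. The only points that require attention are (a) verifying that $t_n$ really commutes past $w^{\prime}$ so that the powers of $t_n$ may be merged, (b) confirming that each summand presents the exact split form $(\text{element of }\Y)\cdot t_{n+1}^{\,d-s}$ demanded by (\ref{tracedef}) for ${\rm tr}_{n+1}$, and (c) the harmless but essential use of $x_k\neq 0$ in passing from ${\rm Tr}_{n-1}(w^{\prime})$ back to ${\rm Tr}_n(w)$.
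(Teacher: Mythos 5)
Your proposal is correct and follows essentially the same route as the paper: split $e_n^{(m)}$ into its framing summands, merge the powers of $t_n$, apply the trace rules termwise to peel off $x_{d-s}$ and $x_{k+m+s}$, recognise the resulting coefficient as $E^{(k+m)}$, and divide by the non-zero parameter $x_k$ to convert ${\rm Tr}_{n-1}(w^{\prime})$ into ${\rm Tr}_n(w)/x_k$. The only cosmetic differences are that you phrase the first reduction via the relative trace ${\rm tr}_{n+1}$ and rule (iv) of Theorem~\ref{trace} rather than applying rule (iv) twice, and your remark about commuting $t_n$ past $w^{\prime}$ is unnecessary (the factor $t_n^k$ already sits adjacent to $e_n^{(m)}$), but neither affects correctness.
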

  \begin{proof}
 Splitting $e_n^{(m)}$, we have:
 $$
 {\rm Tr}_{n+1}(we_n^{(m)}) =\frac{1}{d}\sum_s{\rm Tr}_{n+1}(w^{\prime}t_{n}^{k+m+s}t_{n+1}^{d-s}).
 $$
 Now, ${\rm Tr}_{n+1}(w^{\prime}t_{n}^{k+m+s}t_{n+1}^{d-s}) = x_{d-s}{\rm Tr}_n(w^{\prime}t_{n}^{k+m+s})=
 x_{d-s}x_{k+m+s} {\rm Tr}_n(w^{\prime})$. Then
$$
 {\rm Tr}_{n+1}(we_n^{(m)}) =
 \frac{1}{d}\sum_s x_{d-s}x_{k+m+s} {\rm Tr}_{n-1}(w^{\prime}) = E^{(k+m)}  {\rm Tr}_{n-1}(w^{\prime})
 = \frac{E^{(k+m)}}{x_k}{\rm Tr}_n(w).
 $$
  \end{proof}

\begin{lemma}\label{h2}
  Let $w=w^{\prime}b_nt_n^k$, where $w^{\prime}\in \Z$. Then
$$
{\rm Tr}_{n+1}(we_n^{(m)})=\frac{F^{(k+m)}}{y_k}{\rm Tr}_n(w).
$$
In particular, we have ${\rm Tr}_{n+1}(we_n)=\frac{F^{(k)}}{y_k}{\rm Tr}_n(w)$.
\end{lemma}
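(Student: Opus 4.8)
The plan is to mirror the proof of Lemma \ref{h1}, but with the parameters $y_j$ attached to the $b_nt_n$-part playing the role that the $x_j$ played for the pure $t_n$-part. The whole argument is a direct bookkeeping computation built entirely from the Markov trace rules (iii) and (iv) of Theorem \ref{trace}, so no new structural input is needed.

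First I would expand $e_n^{(m)}$ according to its definition and collect the powers of $t_n$ (the factor $t_n^k$ simply merges with $t_n^{m+s}$), obtaining
\begin{equation*}
{\rm Tr}_{n+1}(we_n^{(m)}) = \frac{1}{d}\sum_{s} {\rm Tr}_{n+1}\bigl(w' b_n t_n^{k+m+s} t_{n+1}^{d-s}\bigr).
\end{equation*}
Since $w' b_n t_n^{k+m+s}\in\Y$, I would then peel off the top framing generator $t_{n+1}^{d-s}$ by trace rule (iv), and afterwards peel off $b_n t_n^{k+m+s}$ by trace rule (iii), using that $w'\in\Z$. This gives
\begin{equation*}
{\rm Tr}_{n+1}\bigl(w' b_n t_n^{k+m+s} t_{n+1}^{d-s}\bigr) = x_{d-s}\,{\rm Tr}_n\bigl(w' b_n t_n^{k+m+s}\bigr) = x_{d-s}\,y_{k+m+s}\,{\rm Tr}_{n-1}(w').
\end{equation*}

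Summing over $s$ and recognizing the scalar coefficient $\tfrac{1}{d}\sum_s x_{d-s}y_{k+m+s}$ as precisely $F^{(k+m)}$ from (\ref{F^k}), I would conclude ${\rm Tr}_{n+1}(we_n^{(m)}) = F^{(k+m)}\,{\rm Tr}_{n-1}(w')$. Finally I would reabsorb ${\rm Tr}_{n-1}(w')$ back into ${\rm Tr}_n(w)$: a single application of rule (iii) to $w=w'b_nt_n^k$ yields ${\rm Tr}_n(w)=y_k\,{\rm Tr}_{n-1}(w')$, and since the $y_k$ are nonzero by hypothesis we may write ${\rm Tr}_{n-1}(w') = y_k^{-1}{\rm Tr}_n(w)$. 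Substituting gives the claimed identity
\begin{equation*}
{\rm Tr}_{n+1}(we_n^{(m)}) = \frac{F^{(k+m)}}{y_k}\,{\rm Tr}_n(w),
\end{equation*}
and the special case ${\rm Tr}_{n+1}(we_n)=\tfrac{F^{(k)}}{y_k}{\rm Tr}_n(w)$ is immediate on setting $m=0$, since $e_n=e_n^{(0)}$.

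I do not expect a genuine obstacle here; the proof is formally parallel to that of Lemma \ref{h1}. The only points that require a little care are the correct ordering in which the two trace rules are applied (rule (iv) must remove the $(n{+}1)$-level framing first, so that what remains lies in $\Z$ and rule (iii) becomes applicable) and the exponent bookkeeping modulo $d$ when the powers $t_n^k$, $t_n^{m+s}$, $t_{n+1}^{d-s}$ are combined. The invertibility of $y_k$, which legitimizes the division in the statement, is exactly the nonvanishing hypothesis fixed on the trace parameters.
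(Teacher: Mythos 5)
Your proposal is correct and follows essentially the same route as the paper's proof: expand $e_n^{(m)}$, peel off $t_{n+1}^{d-s}$ via rule (iv) to produce $x_{d-s}$, peel off $b_nt_n^{k+m+s}$ via rule (iii) to produce $y_{k+m+s}\,{\rm Tr}_{n-1}(w')$, recognize the coefficient as $F^{(k+m)}$, and rewrite ${\rm Tr}_{n-1}(w')$ as ${\rm Tr}_n(w)/y_k$ using the nonvanishing of the fixed trace parameters. The paper performs exactly this chain of equalities (leaving the trace rules implicit), so there is nothing to add.
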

\begin{proof} We have:
   \begin{eqnarray*}
         {\rm Tr}_{n+1}(we_n^{(m)}) &=& \frac{1}{d}\sum_s{\rm Tr}_{n+1}(w^{\prime}b_{n}t_{n}^{k+m+s}t_{n+1}^{d-s}) \\
       &=& \frac{1}{d}\sum_s x_{d-s}{\rm Tr}_n(w^{\prime}b_{n}t_{n}^{k+m+s}) \\
       &=& \frac{1}{d}\sum_s x_{d-s}y_{k+m+s} {\rm Tr}_{n-1}(w^{\prime})  \\
       &=& F^{(k+m)}  {\rm Tr}_{n-1}(w^{\prime})\\
       &=& \frac{F^{(k+m)}}{y_k}{\rm Tr}_n(w).
    \end{eqnarray*}
\end{proof}

\begin{lemma}\label{h3}
  Let $w=w^{\prime}\mathfrak{m}_{n,k,\alpha}^{\pm}$, with $w^{\prime}\in \Z$. Then  ${\rm Tr}_{n+1}(we_n)=z{\rm Tr}_n(xe_{n-1})$, where $x=\mathfrak{m}_{n-1,k,\alpha}^{\pm}w^{\prime}$.
\end{lemma}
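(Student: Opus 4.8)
The plan is to peel off the top framing level on both sides and reduce the identity to the cyclicity of the trace at level $n-1$. Throughout I use ${\rm Tr}_{n+1} = {\rm Tr}_{n-1}\circ{\rm tr}_n\circ{\rm tr}_{n+1}$ and write $e_n=\tfrac1d\sum_s t_n^s t_{n+1}^{d-s}$.

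First I would bring $we_n$ to split form. Since $k<n$, Lemma \ref{relations1C}(i) gives $\mathfrak{m}_{n,k,\alpha}^{\pm}t_n^s=t_{n-1}^s\mathfrak{m}_{n,k,\alpha}^{\pm}$, while $\mathfrak{m}_{n,k,\alpha}^{\pm}$ (involving only $g_k,\dots,g_{n-1},b_k,t_k$) commutes with $t_{n+1}$; hence $we_n=\tfrac1d\sum_s w'\,t_{n-1}^s\,\mathfrak{m}_{n,k,\alpha}^{\pm}\,t_{n+1}^{d-s}$, where $w't_{n-1}^s\mathfrak{m}_{n,k,\alpha}^{\pm}\in\Y$. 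Applying ${\rm tr}_{n+1}$, the trailing factor $t_{n+1}^{d-s}$ contributes $x_{d-s}$ by the first line of (\ref{tracedef}) (extended linearly from $\mathsf{C}_n$), so ${\rm tr}_{n+1}(we_n)=\tfrac1d\sum_s x_{d-s}\,w't_{n-1}^s\mathfrak{m}_{n,k,\alpha}^{\pm}$. Now ${\rm tr}_n$ acts by the third line of (\ref{tracedef}), legitimate because $k<n$ and $w't_{n-1}^s\in\Z$, giving ${\rm tr}_n(w't_{n-1}^s\mathfrak{m}_{n,k,\alpha}^{\pm})=z\,w't_{n-1}^s\mathfrak{m}_{n-1,k,\alpha}^{\pm}$. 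Thus
\[
{\rm Tr}_{n+1}(we_n)=\frac{z}{d}\sum_s x_{d-s}\,{\rm Tr}_{n-1}\!\left(w't_{n-1}^s\,\mathfrak{m}_{n-1,k,\alpha}^{\pm}\right).
\]

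Second I would run the mirror computation on the right-hand side. With $x=\mathfrak{m}_{n-1,k,\alpha}^{\pm}w'\in\Z$ and $e_{n-1}=\tfrac1d\sum_s t_{n-1}^s t_n^{d-s}$, the factor $t_n^{d-s}$ is removed by ${\rm tr}_n$ producing $x_{d-s}$, so that
\[
z\,{\rm Tr}_n(xe_{n-1})=\frac{z}{d}\sum_s x_{d-s}\,{\rm Tr}_{n-1}\!\left(\mathfrak{m}_{n-1,k,\alpha}^{\pm}\,w't_{n-1}^s\right).
\]

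Comparing the two sums term by term, the lemma reduces exactly to ${\rm Tr}_{n-1}(w't_{n-1}^s\cdot\mathfrak{m}_{n-1,k,\alpha}^{\pm})={\rm Tr}_{n-1}(\mathfrak{m}_{n-1,k,\alpha}^{\pm}\cdot w't_{n-1}^s)$, i.e.\ rule (v) of Theorem \ref{trace} at level $n-1$ (available, with no circularity, since Theorem \ref{trace} is established in Section 5 independently of this lemma). The step I expect to demand the most care is the first one: tracking the framing exponents correctly when commuting $t_n$ and $t_{n+1}$ past $\mathfrak{m}_{n,k,\alpha}^{\pm}$ and checking that the intermediate expression is in precisely the shape required to apply (\ref{tracedef}); once both sides are reduced to the same summation the only genuine ingredient is the cyclic invariance of the trace.
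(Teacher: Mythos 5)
Your proof is correct and follows essentially the same route as the paper's: expand $e_n$, absorb the $t_{n+1}^{d-s}$ factor via the trace rule, commute the framing past $\mathfrak{m}_{n,k,\alpha}^{\pm}$ (Lemma \ref{relations1C}(i)), peel off $g_{n-1}$ with the factor $z$ from the third line of (\ref{tracedef}), and conclude by the cyclicity of ${\rm Tr}_{n-1}$ (Theorem \ref{trace}(v)), which is exactly the ingredient the paper invokes. The only difference is organizational — you reduce both sides to a common expression rather than writing one chain of equalities from left to right — which is immaterial.
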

\begin{proof}We have:
   \begin{eqnarray*}
         {\rm Tr}_{n+1}(we_n) &=& \frac{1}{d}\sum_s{\rm Tr}_{n+1}(w'\mathfrak{m}_{n,k,\alpha}^{\pm}t_n^st_{n+1}^{d-s}) \\
       &=&  \frac{1}{d}\sum_s x_{d-s}{\rm Tr}_n(w'g_{n-1}\mathfrak{m}_{n-1,k,\alpha}^{\pm}t_n^s) \\
       &=&  \frac{1}{d}\sum_s x_{d-s}{\rm Tr}_n(w't_{n-1}^s g_{n-1}\mathfrak{m}_{n-1,k,\alpha}^{\pm}) \\
       &=&  \frac{z}{d}\sum_s x_{d-s}{\rm Tr}_{n-1}(w't_{n-1}^s \mathfrak{m}_{n-1,k,\alpha}^{\pm})\\
       &=&  \frac{z}{d}\sum_s x_{d-s}{\rm Tr}_{n-1}(\mathfrak{m}_{n-1,k,\alpha}^{\pm}w't_{n-1}^s) \\
       &=& \frac{z}{d}\sum_s {\rm Tr}_n(\mathfrak{m}_{n-1,k,\alpha}^{\pm}w't_{n-1}^st_{n}^{d-s})=z{\rm Tr}_n(xe_{n-1}).
    \end{eqnarray*}
\end{proof}

\begin{proof}[Proof of Theorem \ref{factortr}]
  By the linearity of ${\rm Tr}_{n+1}$ we can assume that $w$ is an element in the inductive basis $\mathsf{C}_n$. We proceed by induction on $n$. For $n=1$ we have two possibilities: $w=t_1^{k}$ or $w=b_1t_1^{k}$.
 For  $w=t_1^{k}$, we have:
 $$
 {\rm Tr}_{n+1}(we_1)=\frac{1}{d}\sum_s{\rm x}_{d-s}{\rm x}_{k+s}=\frac{{\rm E}^{(k)}}{{\rm x}_k}{\rm Tr}_n(w) = {\rm E}^{(0)}{\rm Tr}_n(w)={\rm Tr}_{n+1}(e_1){\rm Tr}_n(w).
 $$
For  $w=b_1t_1^{k}$, we have:
 $$
 {\rm Tr}_{n+1}(we_1)=\frac{1}{d}\sum_s{\rm x}_{d-s}{\rm y}_{k+s}=\frac{{\rm F}^{(k)}}{{\rm y}_k}{\rm Tr}_n(w)=  {\rm E}^{(0)}{\rm Tr}_n(w)={\rm Tr}_{n+1}(e_1){\rm Tr}_n(w).
 $$
Thus,  for $n=1$ the theorem is proved. Suppose now that the theorem  is true for every positive integer less than $n+1$. Set $w$ be an element in $\mathsf{C}_n$. We shall prove the theorem by distinguishing the  three types of form for $w$.

\smallbreak
\noindent $\bullet$ Suppose $w=w^{\prime}t_n^k$, where $w^{\prime}\in \Z$. By using the Lemma \ref{h1} and the fact that ${\rm x}_k$'s satisfies the ${\rm E}$--condition, we have:
$$
{\rm Tr}_{n+1}(we_n)=\frac{{\rm E}^{(k)}}{{\rm x}_k}{\rm Tr}_n(w)={\rm E}^{(0)}{\rm Tr}_n(w)={\rm Tr}_n(w){\rm Tr}_{n+1}(e_n).
$$

\noindent
$\bullet$ Suppose $w=w^{\prime}b_nt_n^k$, where $ w^{\prime}\in \Z$. Then, by using now Lemma \ref{h2} and the fact that the ${\rm y}_k$'s satisfied the ${\rm F}$--condition,  we have:
$$
{\rm Tr}_{n+1}(we_n)=\frac{{\rm F}^{(k)}}{{\rm y}_k}{\rm Tr}_n(w)={\rm E}^{(0)}{\rm Tr}_n(w)={\rm Tr}_n(w){\rm Tr}_{n+1}(e_n).
$$

\noindent
$\bullet$ Finally, suppose  $w=w^{\prime}\mathfrak{m}_{n,k,\alpha}^{\pm}$, where $w'\in \Z$. From  Lemma \ref{h3}, we have
$$
{\rm Tr}_{n+1}(we_n)=z{\rm Tr}_{n}(xe_{n-1})\quad \mbox{where $x=\mathfrak{m}_{n-1,k,\alpha}^{\pm}w'$}\in \Z.
$$
Now, by using the induction hypothesis,   we get ${\rm Tr}_n(xe_{n-1}) = {\rm Tr}_{n-1}(x){\rm Tr}_n(e_{n-1})$. But, now ${\rm Tr}_{n+1}(e_n)={\rm Tr}_n(e_{n-1})$ and
$$
z{\rm Tr}_{n-1}(x) = {\rm Tr}_n(g_{n-1}\mathfrak{m}_{n-1,k,\alpha}^{\pm}w^{\prime}) = {\rm Tr}_n(\mathfrak{m}_{n,k,\alpha}^{\pm}w^{\prime}).
$$
Therefore
$$
{\rm Tr}_{n+1}(we_n)={\rm Tr}_n(w){\rm Tr}_{n+1}(e_n).
$$
\end{proof}

\subsection{\it Solving  the  ${\rm F}$--system}

The ${\rm E}$--system was solved by P. G\'erardin, by using some tools from the complex harmonic analysis on finite groups, see \cite[Appendix]{jula5}. However, his method works on any field having characteristic $0$. We shall  introduce now some notations and definitions, necessary to explain the method used  by G\'erardin, which will be used  to solve the ${\rm F}$--system as well. For more details on the tools of harmonic analysis used here, see \cite{jula2, gojukolaLMJ}.

\smallbreak

We shall regard the group algebra $\Lambda :={\Bbb L}[{\Bbb Z}/d{\Bbb Z}]$, as the algebra formed by all complex functions on ${\Bbb Z}/d{\Bbb Z}$, where the product is the  convolution product, that is:
$$
(f \ast g)(x)=\sum_{y\in {\Bbb Z}/d{\Bbb Z} }f(y)g(x-y)\qquad \text{where} \quad f, g\in \Lambda.
$$
As usual, we denote by $\delta_a \in \Lambda$ the function with support $\{a\}$. Recall that $\delta_0$ is the unity with respect to the convolution product and that $\{\delta_a\ ; \ a\in {\Bbb Z}/d{\Bbb Z}\}$ is a linear basis for $\Lambda$. The algebra $\Lambda$ is commutative and is the direct sum of the simple ideals ${\Bbb K}\,{\bf e}_{a}$, where $a\in {\Bbb Z}/d{\Bbb Z} $ and the ${\bf e}_{a}$'s are the characters of ${\Bbb Z}/d{\Bbb Z} $, that is:
$$
{\bf e}_a:b\mapsto \cos \left(\frac{2\pi ab}{d}\right) + i\sin \left(\frac{2\pi ab}{d}\right).
$$

In $\Lambda$ we have another product, the punctual product, that is:
$$
fg: x\mapsto f(x)g(x)\qquad \text{where} \quad f, g\in \Lambda.
$$
The algebra $\Lambda$ with the punctual product has unity ${\bf e}_0$ and is the direct sum of its simple ideals
${\Bbb K}\,\delta_a$, where $a\in {\Bbb Z}/d{\Bbb Z}$.
\smallbreak
The Fourier transform $\mathcal{F}$ on $\Lambda$ is the automorphism defined by $f\mapsto \widehat{f}$, where
$$
\widehat{f}(x):= (f\ast {\bf e}_x)(0)=\sum_{y\in {\Bbb Z}/d{\Bbb Z}}f(y){\bf e}_x(-y).
$$
Recall that $(\mathcal{F}^{-1}f)(x) = d^{-1} \widehat{f}(-u)$,
where $\widehat{f} (v)=\sum_{u\in G}f(u){\bf e}_v(-u)$.
\smallbreak
The following proposition collects the properties of the Fourier transform used here. These properties are well-known and can be found, for example, in \cite{te}.

\begin{proposition}\label{fourier}
For every $a\in {\Bbb Z}/d{\Bbb Z}$ and $f, g\in \Lambda$. We have:
\begin{enumerate}
  \item[(i)] $\widehat{\delta_a}={\bf e}_{-a}$
  \item[(ii)] $\widehat{{\bf e}_a}=d\delta_a$
  \item[(iii)] $\widehat{\widehat{f}\,\,}(u)=df(-u)$
  \item[(iv)] $\widehat{f\ast g}=\widehat{f}\, \widehat{g}$
  \item [(v)]$\widehat{fg}=d^{-1}\widehat{f}\ast \widehat{g}$.
\end{enumerate}
\end{proposition}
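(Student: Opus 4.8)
The plan is to reduce everything to two elementary facts about the characters: that each ${\bf e}_x$ is a group homomorphism from ${\Bbb Z}/d{\Bbb Z}$ to ${\Bbb C}^\times$, namely ${\bf e}_x(a)=\omega^{xa}$ with $\omega=e^{2\pi i/d}$ the fixed primitive $d$-th root of unity, and the orthogonality relation $\sum_{y\in {\Bbb Z}/d{\Bbb Z}}\omega^{ky}=d$ when $k\equiv 0$ and $0$ otherwise. I would also record the symmetry ${\bf e}_x(a)={\bf e}_a(x)$, which is immediate from the exponent $xa=ax$. Since every identity in the statement is ${\Bbb L}$-linear (indeed bilinear for (iv) and (v)) in its arguments, it suffices to verify the formulas on the basis $\{\delta_a\}$ or for characters, and these three observations turn (i)--(iv) into one-line computations.

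First I would prove (i) and (ii) straight from the definition $\widehat{f}(x)=\sum_{y}f(y){\bf e}_x(-y)$. For (i), only the term $y=a$ survives, giving $\widehat{\delta_a}(x)={\bf e}_x(-a)=\omega^{-ax}={\bf e}_{-a}(x)$. For (ii), $\widehat{{\bf e}_a}(x)=\sum_y\omega^{ay}\omega^{-xy}=\sum_y\omega^{(a-x)y}$, which by orthogonality equals $d$ exactly when $x=a$, so $\widehat{{\bf e}_a}=d\delta_a$. Statement (iii) then follows by linearity: writing $f=\sum_a f(a)\delta_a$ and applying (i) gives $\widehat{f}=\sum_a f(a){\bf e}_{-a}$, and a second application using (ii) gives $\widehat{\widehat{f}\,\,}=\sum_a f(a)\,d\,\delta_{-a}$, whose value at $u$ is $df(-u)$.

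For (iv) I would compute directly, the only input being that ${\bf e}_x$ is multiplicative on sums of arguments. Expanding $\widehat{f\ast g}(x)=\sum_{y,z}f(z)g(y-z){\bf e}_x(-y)$ and substituting $w=y-z$, the factorisation ${\bf e}_x(-y)={\bf e}_x(-z){\bf e}_x(-w)$ splits the double sum into the product $\bigl(\sum_z f(z){\bf e}_x(-z)\bigr)\bigl(\sum_w g(w){\bf e}_x(-w)\bigr)=\widehat{f}(x)\widehat{g}(x)$, which is the punctual product. Finally, (v) is not checked directly but deduced from (iii) and (iv) together with the fact, already recorded in the text, that $\mathcal{F}$ is an automorphism and in particular injective: applying (iv) to the pair $\widehat{f},\widehat{g}$ and then (iii) shows $\widehat{\widehat{f}\ast\widehat{g}\,}$ has value $d^2 f(-u)g(-u)$ at $u$, while (iii) applied to $fg$ gives $\widehat{\widehat{fg}\,}(u)=d\,f(-u)g(-u)$; hence $\widehat{\widehat{fg}\,}=\widehat{d^{-1}\widehat{f}\ast\widehat{g}\,}$, and injectivity of $\mathcal{F}$ forces $\widehat{fg}=d^{-1}\widehat{f}\ast\widehat{g}$. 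There is no genuine obstacle here, as the content is entirely standard finite-group harmonic analysis; the only thing to watch is the bookkeeping of the sign convention built into the definition of $\widehat{\,\cdot\,}$ (the argument $-y$) and of the character symmetry, which must be applied consistently so that the two evaluations entering the deduction of (v) match.
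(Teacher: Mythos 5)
Your proof is correct. Note that the paper itself offers no proof of this proposition: it is stated as a collection of well-known facts with a citation to Terras's book, so there is no internal argument to compare against; your verification simply supplies the standard computation that the paper delegates to the literature. All steps check out: (i) and (ii) are the one-term evaluation and the orthogonality relation $\sum_y \omega^{ky} = d\,[k\equiv 0]$, (iii) follows by linearity from (i) and (ii), and (iv) is the substitution $w=y-z$ together with multiplicativity of ${\bf e}_x$. For (v), your appeal to injectivity of $\mathcal{F}$ is legitimate, and it is worth observing that you do not even need to take the paper's word that $\mathcal{F}$ is an automorphism: injectivity is itself a consequence of your (iii), since $\widehat{f}=0$ forces $df(-u)=\widehat{\widehat{f}\,\,}(u)=0$, so the argument is fully self-contained. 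The only alternative worth mentioning is that (v) can also be verified directly by the same change-of-variables computation as (iv) applied to $\widehat{f}\ast\widehat{g}$, which avoids the double-transform bookkeeping, but your route is equally valid and arguably cleaner given that (iii) and (iv) are already in hand.
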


 To solve the ${\rm E}$--system, G\'erardin considered the elements $x\in \Lambda$, defined by $x(k)= x_k$. Then, he interpreted the ${\rm E}$--system as the functional equation $x\ast x = (x\ast x)(0) x$ with the initial condition $x(0)=1$.
Now, by applying the Fourier transform on this functional equation we obtain $\widehat{x}^2 = (x\ast x)(0) \widehat{x}$. This last equations implies that $\widehat{x}$ is constant on its support $S$, where it takes the values $(x\ast x)(0)$. Thus, we have
$$
\widehat{x} = (x\ast x)(0)\sum_{s\in S}\delta_s.
$$
By applying $\mathcal{F}^{-1}$ and the properties listed in the proposition above, G\'erardin showed that the solutions of the ${\rm E}$--system are parameterized by the non--empty subsets of ${\Bbb Z}/d{\Bbb Z}$. More precisely, for such a
subset $S$,  the solution $x_S$ is given as follows.
$$
x_S=\frac{1}{|S|}\sum_{s\in S}{\bf e}_s.
$$

Now, in order to solve the ${\rm F}$--system with respect to $x_S$, we define $y\in\Lambda$ by $y(k)=y_k$. Then we have
$F^{(k)} = d^{-1} (x\ast y)(k)$. So, to solve the ${\rm F}$--system is equivalent to solving the following functional equation:
$$
x\ast y = (x\ast x)(0)y.
$$
which, applying the Fourier transform and Proposition \ref{fourier} (iv), is equivalent to:
$$
\widehat{x}\widehat{y}=(x\ast x)(0)\widehat{y}.
$$
This equation implies that the support of $\widehat{y}$ is contained in the support of $\widehat{x}$.  Now, set $S$ the support of $\widehat{x}$. Then we can write $\widehat{y}=\sum_{s\in S}\lambda_s\delta_s$. In this last equation, by
applying $\mathcal{F}^{-1}$ and  Proposition \ref{fourier} (i) and (iv), we get:
$$
y= \frac{1}{d}\sum_{s\in S}\lambda_s{\bf e}_{s}.
$$
Thus, we have proved the following proposition.

\begin{proposition}
The solution of the ${\rm F}$-system with  respect to the solution $x_S$ of the ${\rm E}$--system is in the form:
$$
y_S= \sum_{s\in S}\alpha_s{\bf e}_{s}
$$
where the $\alpha_s$'s are complex numbers.
\end{proposition}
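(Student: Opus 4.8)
The plan is to translate the ${\rm F}$--system into a single functional equation in the convolution algebra $\Lambda$ and then solve it on the Fourier side, mirroring exactly G\'erardin's treatment of the ${\rm E}$--system. First I would record the dictionary: writing $x:=x_S\in\Lambda$ for the given solution of the ${\rm E}$--system (so $x(k)={\rm x}_k$) and $y\in\Lambda$ for the unknown function $y(k)=y_k$, a direct reindexing of the defining sums yields $E^{(k)}=d^{-1}(x\ast x)(k)$ and $F^{(k)}=d^{-1}(x\ast y)(k)$. Consequently the ${\rm E}$--condition satisfied by $x_S$ reads $x\ast x=(x\ast x)(0)\,x$, and the ${\rm F}$--system $F^{(m)}=y_m{\rm E}^{(0)}$ becomes the linear functional equation
$$
x\ast y=(x\ast x)(0)\,y .
$$

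Next I would apply the Fourier transform $\mathcal{F}$. By Proposition \ref{fourier}(iv) we have $\widehat{x\ast y}=\widehat{x}\,\widehat{y}$ (punctual product), so, $(x\ast x)(0)$ being a nonzero scalar, the equation above is equivalent to
$$
\widehat{x}\,\widehat{y}=(x\ast x)(0)\,\widehat{y}.
$$
Here I would invoke the structural output of the ${\rm E}$--system analysis, namely $\widehat{x}=(x\ast x)(0)\sum_{s\in S}\delta_s$, so that $\widehat{x}$ equals the nonzero constant $(x\ast x)(0)$ on $S$ and vanishes off $S$. Reading the displayed identity pointwise then forces $\widehat{y}(u)=0$ for every $u\notin S$ — this is where the nonvanishing of $(x\ast x)(0)$ is essential — while imposing no constraint for $u\in S$. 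Hence $\widehat{y}$ is supported in $S$ and can be written $\widehat{y}=\sum_{s\in S}\lambda_s\delta_s$ with arbitrary scalars $\lambda_s$.

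Finally I would invert. Since $\widehat{{\bf e}_s}=d\,\delta_s$ by Proposition \ref{fourier}(ii), one has $\mathcal{F}^{-1}(\delta_s)=d^{-1}{\bf e}_s$, and therefore
$$
y=\sum_{s\in S}\lambda_s\,\mathcal{F}^{-1}(\delta_s)=\frac{1}{d}\sum_{s\in S}\lambda_s{\bf e}_s=\sum_{s\in S}\alpha_s{\bf e}_s,
$$
where $\alpha_s:=\lambda_s/d$, which is exactly the asserted form $y_S=\sum_{s\in S}\alpha_s{\bf e}_s$. The main difficulty is bookkeeping rather than conceptual: one must keep the convolution and punctual products distinct and use that $\mathcal{F}$ interchanges them as recorded in Proposition \ref{fourier}(iv)--(v), and one must feed in the known shape $\widehat{x}=(x\ast x)(0)\sum_{s\in S}\delta_s$ together with $(x\ast x)(0)\neq 0$ to localize the support of $\widehat{y}$.
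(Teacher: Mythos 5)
Your proposal is correct and follows essentially the same route as the paper: rewrite the ${\rm F}$--system as the functional equation $x\ast y=(x\ast x)(0)\,y$, apply the Fourier transform to get $\widehat{x}\,\widehat{y}=(x\ast x)(0)\,\widehat{y}$, conclude that $\widehat{y}$ is supported in $S=\operatorname{supp}\widehat{x}$, and invert to obtain $y=\frac{1}{d}\sum_{s\in S}\lambda_s{\bf e}_s$. Your only addition is making explicit the pointwise argument off $S$ and the role of $(x\ast x)(0)\neq 0$, which the paper leaves implicit in the sentence ``this equation implies that the support of $\widehat{y}$ is contained in the support of $\widehat{x}$.''
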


\section{Knot and link invariants from $\Y$}

In this section we define invariants for knots and links in the solid torus, by using the Jones recipe applied to the pairs $(\Y, {\rm Tr}_n)$ where $n\geq 1$. To do that, we fix from now on  that the trace parameters $x_k$ satisfy  the ${\rm E}$--condition and the trace parameters  $y_k$ satisfy the ${\rm F}$--system, with respect to
the $x_k$'s. The invariants constructed here will take values in $\Bbb{L}$.

More precisely, the closure of a framed braid $\alpha$ of type ${\mathtt B}$ (recall Section~\ref{framizations}) is defined by joining with simple (unknotted and unlinked) arcs its corresponding endpoints and is denoted by $\widehat{\alpha}$. The result of closure, $\widehat{\alpha}$, is a framed link in the solid torus, denoted $ST$. This can be understood by viewing the closure of the fixed strand as the complementary solid torus. For an example of a framed link in the solid torus see Figure~\ref{flink}. By the analogue of the Markov theorem for  $ST$ (cf. for example \cite{la1,la2}), isotopy classes of oriented links in $ST$ are in bijection with equivalence classes of braids of type ${\mathtt{B}}$ and this bijection  carries through to the class of framed links of type ${\mathtt{B}}$.
\begin{figure}[h!]
\begin{center}
  \includegraphics{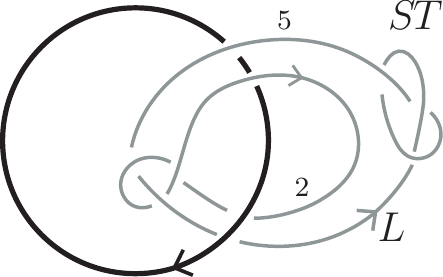}
	\caption{A framed link in the solid torus.}\label{flink}
	\end{center}
 \end{figure}

We set
\begin{equation}\label{CapitalLambda}
\lambda_S := \frac{z - ({\rm u}- {\rm u}^{-1}){\rm E}_S}{z} \quad \text{and} \quad \Lambda_S:=\frac{1}{z \sqrt{\lambda_S}},
\end{equation}
where ${\rm E}_S= 1/|S|$. We are now in the position to define link invariants in the solid torus.

\begin{definition} \rm
 For $\alpha$ in $\mathcal{F}_{n}^{\mathtt{B}}$, the Markov trace ${\rm Tr}$ with the trace parameters specialized to solutions of the ${\rm E}$--system and the ${\rm F}$--system, and $\pi$ the natural epimorphism of $\mathcal{F}_{n}^{\mathtt{B}}$ onto $\Y$ we define
$$
{\mathcal X}_S^{\mathtt{B}}(\widehat{\alpha})   :=
 \Lambda_S^{n-1} (\sqrt{\lambda_S})^e\, {\rm Tr}(\pi(\alpha)),
$$
\noindent where $e$ is the exponent sum of the $\sigma_i$'s that appear in  $\alpha$.   Then
${\mathcal X}_S^{\mathtt{B}}$  is a Laurent polynomial in ${\rm u}, {\rm v}$ and $z$ and it  depends    only  on     the   isotopy  class   of the  framed link
$\widehat{\alpha}$, which represents an oriented  framed link in $ST$.
\end{definition}

\begin{remark} \rm
The invariants ${\mathcal X}_S^{\mathtt{B}}$, when restricted to framed links with all framings equal to 0, give rise to invariants of oriented classical links in  $ST$. By the results in \cite{chjukala} and since classical knot theory embeds in the knot theory of the solid torus, these invariants are distinguished from the Lambropoulou invariants \cite{gela, la1}. More precisely,  they are not topologically equivalent to these invariants on {\it links}.
\end{remark}

\begin{remark} \rm
As we have said previously the cyclotomic Yokonuma--Hecke algebra ${\rm Y}(d,m,n)$ provides a framization of the Hecke algebra of type $\mathtt{B}$ when $m=2$. In \cite{chpoIMRN} where this algebra was introduced, it was also proved that ${\rm Y}(d,m,n)$ supports a Markov trace, which will be denoted here by $\mathtt{Tr}$, for details see \cite[Section 5]{chpoIMRN}. Then using Jones's recipe a new invariant for framed links in the solid torus is constructed, which is given by
\begin{equation}\label{inv2}
 \Gamma_m(\widehat{\alpha}):=\Lambda_S^{n-1} (\sqrt{\lambda_S})^e\, {\mathtt Tr}(\overline{\pi}(\alpha)),
\end{equation}
where $\overline{\pi}:\mathcal{F}_{n}^{\mathtt{B}}\rightarrow {\rm Y}(d,2,n)$ is the natural algebra epimorphism given by
$$\rho_1\mapsto b_1,\quad \sigma_i\mapsto g_i,\quad i=1,\dots, n-1,\quad \text{and}\quad t_j\mapsto t_j,$$ see \cite[Section 6.3]{chpoIMRN}.
 As we see in the previous section, in order that this polynomial becomes an invariant, the trace parameters (of $\mathtt{Tr}$) have to satisfy a non-linear system of equations, which for $m=2$ is equivalent to the systems given here (E-- and F--system).\\

 Now, we would like to make some comparison between $\Gamma_2$ and ${\mathcal X}_S$. At first sight the invariants look similar, but the structural differences between the $\Y$ and ${\rm Y}(d,2,n)$ make them differ (see Remark \ref{diff}). For example, for the loop generator twice, we have the following
\begin{center}
\begin{tabular}{r l|r l}
&\text{In $\Y$}&& \text{In ${\rm Y}(d,2,n)$}\\
\hline
   ${\rm Tr}(\pi(b_1^2))=$&${\rm Tr}(1+(\V-\V^{-1})b_1f_1)$ &${\mathtt Tr}(\overline{\pi}(b_1^2))=$&${\mathtt Tr}(1+(\V-\V^{-1})b_1)$ \\
    =&$1+\frac{(\V-\V^{-1})}{d}\sum_{s}{\rm Tr}(b_1t_1^s)$ & =&$1+(\V-\V^{-1})y_0$ \\
  =&$1+\frac{(\V-\V^{-1})}{d}\sum_{s}y_s$ & &
\end{tabular}
\end{center}
Therefore
$${\mathcal X}_S^{\mathtt{B}}(\widehat{b_1^2})=1+\frac{(\V-\V^{-1})}{d}\sum_{s}{\rm y}_s\quad \text{and} \quad \Gamma_2(\widehat{b_1^2})=1+(\V-\V^{-1}){\rm y}_0 $$
\end{remark}
Then clearly for the framed link $\widehat{b_1^2}$, the two invariants have different values, nevertheless to do a proper comparison of these invariants it is necessary a deeper study.

\end{document}